   \def\MR#1{}}
\newcommand{\cM}{\mathcal{M}}
\newcommand{\cS}{\mathcal{S}}
\newcommand{\bC}{\mathbb{C}}
\newcommand{\bK}{\mathbb{K}}
\newcommand{\bN}{\mathbb{N}}
\newcommand{\bQ}{\mathbb{Q}}\newcommand{\bR}{\mathbb{R}}
\newcommand{\bZ}{\mathbb{Z}}
\newcommand{\bk}{\mathbbm{k}}
\newcommand{\beq}{\begin{equation*}}
\newcommand{\eeq}{\end{equation*}}      
\newtheorem{theorem}{Theorem}[section]
\newtheorem{lemma}[theorem]{Lemma}
\newtheorem{proposition}[theorem]{Proposition} 
\newtheorem{corollary}[theorem]{Corollary}
\newtheorem*{corollary*}{Corollary}
\newtheorem{atheorem}{Theorem}
\newtheorem{acorollary}[atheorem]{Corollary}
\theoremstyle{definition}
\newtheorem*{definition*}{Definition}
\newtheorem*{convention*}{Convention}
\theoremstyle{remark}
\newtheorem{example}[theorem]{Example} 
\newtheorem{remark}[theorem]{Remark}
\newtheorem*{remark*}{Remark}
\newtheorem*{example*}{Example}
\newcommand{\mr}[1]{{\rm #1}}
\newcommand{\ul}[1]{{\underline{#1}}}
\newcommand{\aut}{\mr{aut}}
\newcommand{\ra}{\rightarrow}
\newcommand{\lra}{\longrightarrow}
\newcommand{\xlra}[1]{\overset{#1}{\longrightarrow}}
\newcommand{\longhookrightarrow}{\lhook\joinrel\longrightarrow}
\newcommand{\longtwoheadrightarrow}{\relbar\joinrel\twoheadrightarrow}
\newcommand{\lla}{\longleftarrow}
\newcommand{\fib}{\mr{fib}}
\newcommand{\interior}{\mr{int}}
\newcommand{\Aut}{\mr{Aut}}
\newcommand{\Nrd}{\mr{Nrd}}
\newcommand{\GL}{\mr{GL}}
\newcommand{\Diff}{\mr{Diff}}
\newcommand{\Homeo}{\mr{Homeo}}
\newcommand{\Emb}{\mr{Emb}}
\newcommand{\BlockDiff}{\widetilde{\Diff}}
\newcommand{\hAut}{\mr{hAut}}
\newcommand{\Map}{\mr{Map}}
\newcommand{\BlockBDiff}{B\widetilde{\Diff}}
\newcommand{\BlockBHomeo}{B\widetilde{\Homeo}}
\newcommand{\BDiff}{B\Diff}
\newcommand{\Wh}{\mr{Wh}}
\tikzset{
  symbol/.style={
    draw=none,
    every to/.append style={
      edge node={node [auto=false]{$#1$}}}
  }
}
\newcommand{\circled}[1]{\raisebox{.5pt}{\textcircled{\raisebox{-.9pt} {#1}}}}
\DeclareMathOperator{\hofib}{hofib}
\title[]{Finiteness properties of automorphism spaces of manifolds with finite fundamental group}
\author{Mauricio Bustamante}
\address{Departamento de Matem\'aticas \\ 
	Universidad Cat\'olica de Chile \\
	Santiago \\
	Chile} 
\email{mauricio.bustamante@mat.uc.cl}
\author{Manuel Krannich}
\address{Department of Mathematics \\ Karlsruhe Institute of Technology \\ 76131 Karlsruhe \\ Germany}
\email{krannich@kit.edu}
\author{Alexander Kupers}
\address{Department of Computer and Mathematical Sciences \\ 
	University of Toronto Scarborough \\
	1265 Military Trail, Toronto, ON M1C 1A4 \\
	Canada}
\email{a.kupers@utoronto.ca}
\begin{document}
	
\begin{abstract}
Given a closed smooth manifold $M$ of even dimension $2n\ge6$ with finite fundamental group, we show that the classifying space $\BDiff(M)$ of the diffeomorphism group of $M$ is of finite type and has finitely generated homotopy groups in every degree. We also prove a variant of this result for manifolds with boundary and deduce that the space of smooth embeddings of a compact submanifold $N\subset M$ of arbitrary codimension into $M$ has finitely generated higher homotopy groups based at the inclusion, provided the fundamental group of the complement is finite. As an intermediate result, we show that the group of homotopy classes of simple homotopy self-equivalences of a finite CW complex with finite fundamental group is up to finite kernel commensurable to an arithmetic group.
\end{abstract}

\maketitle 

\tableofcontents

\begin{convention*} During the introduction, $M$ denotes a closed connected smooth manifold of dimension $d\ge6$, unless said otherwise.
\end{convention*}

\medskip

The study of finiteness properties of diffeomorphism groups and their classifying spaces has a long history in geometric topology. Combining work of Borel--Serre \cite{BorelSerre} and Sullivan \cite{Sullivan}, it was known since the 70's that if $M$ is 1-connected, then the group $\pi_0(\Diff(M))$ of isotopy classes of diffeomorphisms is finitely generated and even of finite type, in the following sense:

\begin{definition*} A space $X$ is \emph{of finite type} if it is weakly homotopy equivalent to a CW-complex with finitely many cells in each dimension. A group $G$ is \emph{of finite type} (or \emph{of type $F_\infty$}) if it admits an Eilenberg-MacLane space $K(G,1)$ of finite type.
\end{definition*}

It was also known that this result has its limits: based on work of Hatcher--Wagoner \cite{HatcherWagoner}, Hatcher \cite[Theorem 4.1]{Hatcher} and Hsiang--Sharpe \cite[Theorem 2.5]{HsiangSharpe} showed that this can fail in the presence of infinite fundamental group, for instance for the high-dimensional torus $M=\times^d S^1$. Later Triantafillou \cite[Corollary 5.3]{Triantafillou} weakened the condition on the fundamental group to allow all finite groups (however, there is an issue with the proof; see \cref{sec:finiteness-mcg} for an explanation of the issue and a way to circumvent it).

These results can be interpreted as finiteness results for the fundamental group of the classifying space $\BDiff(M)$ for smooth fibre bundles with fibre $M$. Around the same time as Sullivan's work appeared, Waldhausen \cite{Waldhausen} developed a programme to systematically study these and related classifying spaces. It was known (see \cite[Proposition 7.5]{Hatcher} for a proof outline) that his approach could lead to a proof that also the higher homotopy groups of $B\Diff(M)$ are finitely generated in a certain range of degrees increasing with the dimension of $M$, provided $M$ is $1$-connected, and possibly even when $\pi_1(M)$ is finite (see p.\,16 loc.cit.). This relied on two missing ingredients, which were both provided later: a stability result for pseudoisotopies (proved by Igusa \cite[p.\,6]{Igusa} building on ideas of Hatcher \cite{HatcherSimple}) and a finiteness result for algebraic $K$-theory of spaces (proved by Dwyer \cite[Proposition 1.3]{Dwyer} for $1$-connected $M$ and by Betley \cite[Theorem 1]{Betley} for finite $\pi_1(M)$). The question, however, whether the groups $\pi_k(\BDiff(M))$ are finitely generated beyond this range remained open.

This changed with work of the third-named author \cite{Kupers} who---inspired by work of Weiss \cite{WeissDalian}---combined work of Galatius--Randal-Williams \cite{GRWstable,GRWII,GRWI} with Goodwillie, Klein, and Weiss' embedding calculus \cite{Weiss, GoodwillieKlein} to show that indeed \emph{all} homotopy groups of $\BDiff(M)$ are finitely generated as long as $M$ is $2$-connected. In view of the above mentioned results that hold in a range, one might hope that this $2$-connectivity assumption can be weakened to only requiring the fundamental group $\pi_1(M)$ to be finite. Our main result confirms this in even dimensions.

\begin{atheorem}\label{thm:fg pi}
Let $M$ be a closed smooth manifold of dimension $2n \geq 6$. If $\pi_1(M)$ is finite at all basepoints, then the space $\BDiff(M)$ and all its homotopy groups are of finite type.
\end{atheorem}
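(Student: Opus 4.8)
The plan is to factor the study of $\Diff(M)$ through two intermediate objects — the block diffeomorphism group and the block homotopy automorphism group — and to control the three resulting layers by surgery theory, the algebraic $K$-theory of spaces, and the theory of homotopy automorphisms of finite complexes, respectively. The common thread is that finiteness of $\pi_1(M)$ makes the relevant algebra — the groups $L_*(\mathbb Z[\pi_1 M])$, the groups $K_*(\mathbb Z[\pi_1 M])$, and the (equivariant) rational homotopy of $M$ — finitely generated. One may assume $M$ connected, since for disconnected $M$ the group $\Diff(M)$ is an extension of a finite permutation group by the product of the diffeomorphism groups of the components. A standard manoeuvre then replaces $\BDiff(M)$ by $\BDiff_\partial(W)$ with $W = M \smallsetminus \mathrm{int}(D^{2n})$: fixing an embedded disc and restricting diffeomorphisms to it exhibits $\Emb(D^{2n}, M)$ — homotopy equivalent to the frame bundle of $M$, hence a finite complex — as the homotopy fibre of $\BDiff_\partial(W) \to \BDiff(M)$, so the two spaces have the same finiteness properties. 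We work henceforth with such a compact $W$; the manifolds-with-boundary variant of the theorem is established by the same argument.

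\emph{Block diffeomorphisms.} There is a fibration sequence
\[
\widetilde{\mathcal S}^{\Diff}_\partial(W) \lra \BlockBDiff_\partial(W) \lra \BhAut_\partial(W),
\]
in which the base is $\BhAut_\partial(W)$ because block homotopy self-equivalences can be straightened, and the fibre is the block structure space. The Sullivan–Wall surgery exact sequence sandwiches the homotopy groups of $\widetilde{\mathcal S}^{\Diff}_\partial(W)$ between those of the space of normal invariants $\Map_\partial(W, G/O)$ — finitely generated since $\pi_*(G/O)$ is and $W$ is a finite complex — and the simple $L$-groups $L^s_*(\mathbb Z[\pi_1 W])$ — finitely generated since $\pi_1 W \cong \pi_1 M$ is finite (Wall) — so $\widetilde{\mathcal S}^{\Diff}_\partial(W)$ is of finite type. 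For $\BhAut_\partial(W)$ one passes to the universal cover $\widetilde W$, a simply connected finite complex carrying a $\pi_1 M$-action, and runs rational (and $p$-adic) homotopy theory \emph{equivariantly} for the finite group $\pi_1 M$: this gives that $\pi_0\mathrm{hAut}_\partial(W)$ is finitely presented — indeed commensurable with an arithmetic group — and that the higher homotopy groups of $\mathrm{hAut}_\partial(W)$ are finitely generated, whence $\BhAut_\partial(W)$ is of finite type. Therefore $\BlockBDiff_\partial(W)$ is of finite type.

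\emph{From block to honest diffeomorphisms.} The homotopy fibre of $\BDiff_\partial(W) \to \BlockBDiff_\partial(W)$ is the homogeneous space $\BlockDiff_\partial(W)/\Diff_\partial(W)$, which by Morlet's disjunction method — as developed by Burghelea–Lashof, Igusa, Goette and Weiss–Williams — is governed by the smooth concordance theory of $W$: through concordance stability and Waldhausen's identification of the stable concordance spectrum with the smooth Whitehead spectrum $\mathrm{Wh}^{\Diff}(W)$, its homotopy groups are assembled from $\pi_*\mathrm{Wh}^{\Diff}(W)$ together with finitely many unstable concordance groups of lower-dimensional manifolds built from $W$. Since $\pi_1 W$ is finite, $\mathrm{Wh}^{\Diff}(W)$ — which Waldhausen builds from $A(W)$ and $\Sigma^\infty_+ W$ — has finitely generated homotopy groups, using the finite generation of $K_*(\mathbb Z[\pi_1 W])$ (Kuku) together with that of the stable homotopy groups of $W$ and of $B\pi_1 W$. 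Feeding this in, and inducting on dimension to dispatch the unstable terms, one finds that all homotopy groups of the fibre are finitely generated, and — its fundamental group being essentially a group of concordances, hence abelian and finitely generated — the fibre is of finite type. With the previous step this makes $\BDiff_\partial(W)$, and hence $\BDiff(M)$, of finite type. (Alternatively one could induct over a handle decomposition of $W$, controlling each handle attachment through a Weiss fibre sequence and the convergent Goodwillie–Weiss embedding tower, with the known finiteness of $\BDiff_\partial(D^{2n})$ as the base case.)

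\textbf{The principal difficulties} are not the separate algebraic inputs but two features of the assembly. First, being "of finite type" is, for a \emph{space}, strictly stronger than having finitely generated homotopy groups, and propagating it through the fibrations above demands a disciplined finiteness formalism: the base spaces carry large fundamental groups — for instance the arithmetic group $\pi_0\mathrm{hAut}_\partial(W)$ — so one must track actions on homology with twisted coefficients, pass repeatedly to finite-index subgroups, and use finiteness properties ($F_\infty$, $\mathrm{FP}_\infty$) of the groups involved; in particular, the finite presentability of the mapping class group $\pi_0\Diff(M)$ is an \emph{output} of the argument rather than an input. Second, one must extract \emph{all} the homotopy groups from the concordance machinery, not merely those in a stable range. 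The dimension hypothesis $2n \ge 6$ is used throughout — in high-dimensional surgery, in concordance stabilisation, and (in the inductive route) in the base case $\BDiff_\partial(D^{2n})$ — while it is specifically in the homotopy-automorphism layer that the \emph{finiteness}, rather than mere finite generation, of $\pi_1(M)$ is indispensable, the classical simply connected Sullivan–Wilkerson theory not applying directly to $W$.
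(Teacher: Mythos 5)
Your outline falls into exactly the trap that motivates the paper: the pseudoisotopy/block route only controls $\BDiff$ relative to $\BlockBDiff$ in a \emph{range of degrees}, and the hand-waved ``inducting on dimension to dispatch the unstable terms'' does not close this gap. Concretely, Igusa's pseudoisotopy stability theorem makes the stabilisation map on concordance spaces roughly $(d/3)$-connected, and the Weiss--Williams/Burghelea--Lashof machinery identifies the homotopy fibre $\BlockDiff_\partial(W)/\Diff_\partial(W)$ with a space built from $\mathrm{Wh}^{\Diff}(W)$ only up to that connectivity. Above that range there is no identification of the fibre's homotopy groups with anything accessible via $A$-theory, and no inductive scheme over ``lower-dimensional manifolds built from $W$'' is known to recover them. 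This is precisely the historical state of the art that the paper's introduction records: via Waldhausen, Igusa, Dwyer, and Betley one gets finite generation of $\pi_k(\BDiff(M))$ for $k$ up to roughly $d/3$, and the contribution of Kupers' earlier work and of this paper is to break out of that range by an entirely different mechanism, namely the Weiss fibre sequence fed into Galatius--Randal-Williams' stable homology (extended to finite $\pi_1$ by Friedrich). Your sketch reproduces the old-range argument, not a proof of the theorem.

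Two further problems. First, even within your framework, the reduction to $W=M\smallsetminus\mathrm{int}(D^{2n})$ requires knowing that $\pi_0(\Diff_\partial(W))$ is of finite type; the analogue of Triantafillou's finiteness theorem for mapping class groups of manifolds with boundary (rel boundary) is \emph{not} in the literature — the paper explicitly flags this as an open problem (\cref{rem:relative-arithm}) and designs its argument so as to avoid needing it, invoking finiteness of $\pi_0(\Diff(M))$ for \emph{closed} $M$ only (\cref{prop:mcg-finfty}). Second, your parenthetical alternative (handle induction with base case $\BDiff_\partial(D^{2n})$) is in the right spirit, but as stated it is both incomplete and circular: the finiteness of $\BDiff_\partial(D^{2n})$ is itself Kupers' theorem, proved by the Weiss-fibre-sequence method you would be trying to avoid, and the base-case disc is simply connected so it does not plug directly into an argument for finite $\pi_1(M)$. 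The paper's actual scheme is quite different: it decomposes $M=M^{\le2}\cup M^{>2}$, stabilises by $S^n\times S^n$-connected sums, runs a four-part bootstrap $(a)_k$--$(d)_k$ across the Weiss fibre sequence for $M^{>2}_g$ and the restriction fibre sequence to $M^{\le2}$, uses Friedrich's extension of Galatius--Randal-Williams to get finitely generated \emph{homology} of $\BDiff(M_g)$ in a range growing with $g$, and finally performs surgery on $\pi_2$ to reduce the general case to that of finite $\pi_2$. None of that machinery appears in your sketch, and the parts that do appear (surgery exact sequence, equivariant rational homotopy theory, $K$-theory finiteness) only provide partial inputs to the paper's argument, not the engine that gets all degrees.
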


This has the following immediate corollary.

\begin{acorollary}For a manifold $M$ as in \cref{thm:fg pi}, the homotopy groups of $\BDiff(M)$ are degreewise finitely generated. The same holds for the homology and cohomology groups with coefficients in any $\bZ[\pi_0(\Diff(M))]$-module $A$ that is finitely generated as an abelian group.
\end{acorollary}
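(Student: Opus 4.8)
The plan is to derive both assertions formally from \cref{thm:fg pi}. For the statement about homotopy groups there is essentially nothing to do: a group of finite type is in particular of type $F_1$ and hence finitely generated, and \cref{thm:fg pi} asserts that every homotopy group $\pi_d(\BDiff(M))$ is of finite type, so each of them is finitely generated. (In degrees $d \geq 2$ the two conditions in fact agree, since a finitely generated abelian group admits a finite-type Eilenberg--MacLane space, assembled from circles and infinite lens spaces.)

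For the (co)homology statement I would begin by invoking \cref{thm:fg pi} to fix a CW-complex $Y$ with finitely many cells in each dimension together with a weak equivalence $Y \simeq \BDiff(M)$, and write $\pi := \pi_0(\Diff(M)) \cong \pi_1(Y)$. Then the universal cover $\widetilde Y$ has finitely many $\pi$-orbits of cells in each dimension, so the cellular chain complex $C_*(\widetilde Y)$ is a complex of finitely generated free $\bZ[\pi]$-modules. If $A$ is a $\bZ[\pi]$-module that is finitely generated as an abelian group, then the twisted homology and cohomology of $\BDiff(M)$ with coefficients in $A$ are computed by $C_*(\widetilde Y) \otimes_{\bZ[\pi]} A$ and $\mathrm{Hom}_{\bZ[\pi]}(C_*(\widetilde Y), A)$ respectively; since $\bZ[\pi]^{\oplus c} \otimes_{\bZ[\pi]} A \cong A^{\oplus c} \cong \mathrm{Hom}_{\bZ[\pi]}(\bZ[\pi]^{\oplus c}, A)$ for every finite $c$, these are chain complexes of finitely generated abelian groups. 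Since $\bZ$ is Noetherian, every subquotient of a finitely generated abelian group is finitely generated, and therefore $H_d(\BDiff(M); A)$ and $H^d(\BDiff(M); A)$ are finitely generated for all $d$.

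I do not expect a genuine obstacle here --- the corollary is an elementary consequence of \cref{thm:fg pi}. The two points that warrant a little care are the weak-homotopy invariance of twisted (co)homology, which is what permits replacing $\BDiff(M)$ by the finite-type model $Y$ and holds because such (co)homology depends only on the weak homotopy type together with the fundamental groupoid, and the implication ``of finite type $\Rightarrow$ finitely generated'' for the possibly nonabelian group $\pi_0(\Diff(M))$ as well as for the higher homotopy groups.
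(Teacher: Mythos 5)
Your argument is correct and is the one the paper leaves implicit: the paper calls the corollary \emph{immediate} and gives no proof, but the cellular-homology argument you spell out is exactly what the paper itself appeals to later (in the proof of \cref{cor:fg-homotopy-implies-fg-homology}, where it is remarked that finite-type spaces have degreewise finitely generated (co)homology with such local coefficients). The only point worth flagging is one you already note: replacing $\BDiff(M)$ by a finite-type CW model $Y$ is legitimate because twisted (co)homology with $\pi_1$-module coefficients is a weak-homotopy invariant, and $\pi_1(\BDiff(M))\cong\pi_0(\Diff(M))$ since classifying spaces are connected.
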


\begin{remark*}\ 
\begin{enumerate}
\item There are variants of \cref{thm:fg pi} for spaces of homeomorphisms and for manifolds $M$ that have boundary or come with tangential structures (see \cref{sec:elaborations}).
\item It follows from \cref{thm:fg pi} that for manifolds $M$ as in the statement, not only is $\BDiff(M)$ of finite type, but also the path-component $\Diff_0(M)\subset \Diff(M)$ of the identity (e.g.\,using \cref{prop:finite-type-fibrations} below). It was known that $\Diff_0(M)$ admits up to weak equivalence a CW-structure with countable many cells (see e.g.\,\cite[Proposition 1.1(1)]{BurgheleaFiniteness}) and that it often cannot have one with finitely many cells \cite[Theorem\,B]{ABK}.
\end{enumerate}
\end{remark*}

\subsection*{Embedding spaces}
\cref{thm:fg pi} also implies a finiteness result for the higher homotopy groups of embedding spaces of manifold triads (\cref{thm:thick-codim-one-or-two}). The following is a special case:

\begin{atheorem}\label{thm:codim-one-or-two} Let $M$ be a compact smooth manifold of dimension $2n\ge6$ and $N\subset \mr{int}(M)$ a compact submanifold. If the fundamental groups of $M$ and $M\setminus N$ are finite at all basepoints, then the groups $\pi_k(\mr{Emb}(N,M),\mr{inc})$ are finitely generated for $k \geq 2$.\end{atheorem}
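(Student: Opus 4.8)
The plan is to express $\Emb(N,M)$ near the inclusion in terms of diffeomorphism groups and then invoke \cref{thm:fg pi} together with its variant for manifolds with boundary. First I would fix a closed tubular neighborhood $\nu\subset\mr{int}(M)$ of $N$, the total space of the disc bundle $D(\xi)$ of the normal bundle $\xi\to N$, with sphere bundle $S:=\partial\nu$, and set $W:=\overline{M\setminus\nu}$. Then $W$ is a compact smooth $2n$-manifold with nonempty boundary $\partial W=\partial M\sqcup S$, and the inclusion $W\hookrightarrow M\setminus N$ is a homotopy equivalence, so $\pi_1(W)\cong\pi_1(M\setminus N)$ is finite at all basepoints. (If $\partial M\ne\emptyset$, one replaces $\Diff(M)$ by $\Diff_\partial(M)$ throughout and reads $\Emb(N,M)$ as the space of embeddings into $\mr{int}(M)$.) Writing $X_{\mr{inc}}$ for the path component of the inclusion in an embedding space $X$, the standard facts about embedding spaces — isotopy extension and the tubular neighborhood theorem — produce fibration sequences
\[
\Diff_\partial(W)\lra\Diff(M)\lra\Emb(\nu,M)_{\mr{inc}},\qquad
\mathrm{Aut}(\xi)\lra\Emb(\nu,M)_{\mr{inc}}\lra\Emb(N,M)_{\mr{inc}},
\]
in which the right-hand maps are restriction to $\nu$, resp.\ to the zero section $N$, and $\mathrm{Aut}(\xi)$ is the gauge group of $\xi$; the identification of the fibre of the second restriction with $\mathrm{Aut}(\xi)$ uses the contractibility of the space of embeddings of $D(\xi)$ that extend the inclusion and have a prescribed $1$-jet along $N$.

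Next I would run the long exact sequences of these two fibrations. For $k\ge 2$ all maps in the relevant portion are homomorphisms of abelian groups, so $\pi_k(\Emb(N,M),\mr{inc})$ is an iterated extension of subquotients of $\pi_k(\Diff(M))$, $\pi_{k-1}(\Diff_\partial(W))$, and $\pi_{k-1}(\mathrm{Aut}(\xi))$; since subgroups, quotients, and extensions of finitely generated abelian groups are again finitely generated, it suffices to prove that each of these three groups is finitely generated in every degree. This is also the point where $k\ge 2$ is needed: one degree lower the long exact sequences consist only of exact sequences of pointed sets, the subquotient argument fails, and in fact $\pi_1(\Emb(N,M),\mr{inc})$ need not be finitely generated.

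It then remains to supply the three finiteness inputs. For $M$, \cref{thm:fg pi} (or its variant for manifolds with boundary, when $\partial M\ne\emptyset$) applies since $\dim M=2n\ge 6$ and $\pi_1(M)$ is finite, so $B\Diff(M)$ is of finite type and $\pi_*(\Diff(M))$ is degreewise finitely generated. For $W$ I would apply the boundary variant of \cref{thm:fg pi} to the compact $2n$-manifold $W$ with $\partial W\ne\emptyset$; the only hypothesis to be checked is that $\pi_1(W)\cong\pi_1(M\setminus N)$ is finite, which holds by assumption. It is essential here that no finiteness condition is imposed on $\pi_1(\partial W)$, since the sphere bundle $S$ can have infinite fundamental group (e.g.\ when $N$ is a positive-genus surface in $M=S^{2n}$); this is also the only place where finiteness of $\pi_1(M\setminus N)$, rather than of $\pi_1(M)$ alone, is used. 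Finally, $\mathrm{Aut}(\xi)$ is the space of sections over the finite complex $N$ of a bundle of groups with fibre the compact Lie group $O(2n-\dim N)$; filtering by the skeleta of $N$ realizes it as an iterated fibration whose fibres are finite products of iterated loop spaces of $O(2n-\dim N)$, which have finitely generated homotopy groups, so a straightforward obstruction-theoretic argument over $N$ (cf.\ \cref{prop:finite-type-fibrations}) gives that $\pi_*(\mathrm{Aut}(\xi))$ is degreewise finitely generated. Combining the three inputs through the two long exact sequences gives the theorem.

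I expect the only conceptually essential input, beyond \cref{thm:fg pi} itself, to be its boundary version applied to the complement $W$. The remaining work — setting up the two fibration sequences carefully and identifying the gauge-group correction term — is standard but somewhat fiddly, and is where one must be careful that the reduction really takes place over the path component of the inclusion.
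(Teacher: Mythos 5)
Your proposal is correct and follows essentially the same route as the paper: decompose $M$ into a tubular neighbourhood $\nu(N)$ and its complement $W$, set up the two restriction fibrations $\Diff_\partial(W)\to\Diff_\partial(M)\to\Emb(\nu,M)$ and (gauge group)$\to\Emb(\nu,M)\to\Emb(N,M)$, apply the boundary version of \cref{thm:fg pi} (\cref{thm:bdy}) to $M$ and to $W$, and dispatch the gauge-group fibre via a section-space/skeletal-induction argument (\cref{lem:section-spaces-finite-CW}). The only cosmetic differences are that the paper phrases the gauge-group fibre as the relative section space $\mr{Sect}_{\partial_0}(\mr{Iso}(T\nu(N),TM\,\mr{rel}\,TN)\to N)$ and states the result in the slightly greater generality of triad-pairs (\cref{thm:thick-codim-one-or-two}), from which \cref{thm:codim-one-or-two} is the special case.
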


For submanifolds of codimension at least $3$, the space of embeddings $\Emb(N,M)$ can be studied via the aforementioned embedding calculus. In particular, an experienced user of this calculus will be able to prove \cref{thm:codim-one-or-two} in these codimensions with ease. The novelty of our \cref{thm:codim-one-or-two} is that it applies in \emph{all} codimensions, which allows examples such as the following. 

\begin{example*}For a smooth irreducible projective hypersurfaces $Y \subset \bC P^n$, the fundamental group of the complement is finite \cite[Proposition 4.(1.3), Theorem 4.(1.13)]{Dimca}, so if $n\ge3$ then the groups $\pi_k(\Emb(Y,\bC P^n),\mr{inc})$ are finitely generated for $k \geq 2$. An example of such a $Y$ is given by the Fermat quadric, cut out by the equation $z_0^2 + \cdots + z_n^2 = 0$.
\end{example*}

\subsection*{Mapping class groups}\cref{thm:fg pi} in particular says that for closed smooth manifolds $M$ of dimension $2n\ge6$ whose fundamental group is finite at all basepoints, the group $\pi_1(B\Diff(M))\cong \pi_0(\Diff(M))$ of isotopy classes of diffeomorphisms is of finite type. The proof of this part of the result goes also through for odd-dimensional manifolds, and it involves finiteness results for several variants of the group $\pi_0(\Diff(M))$ (see \cref{sec:mcg-section}). In particular, we show:
\begin{atheorem}\label{thm:mcg}\ 
\begin{enumerate}
\item \label{mainthm-mcg-i} Let $M$ be a closed smooth manifold of dimension $d \geq 6$. If the fundamental group of $M$ is finite at all basepoints, then the group $\pi_0(\Diff(M))$ is of finite type.
\item \label{mainthm-mcg-ii}Let $X$ be a finite CW-complex. If the fundamental group of $X$ is finite at all basepoints, then the groups $\pi_0(\hAut^s(X))$ and $\pi_0(\hAut(X))$ of homotopy classes of (simple) homotopy automorphisms are commensurable up to finite kernel to arithmetic groups.
\end{enumerate}
\end{atheorem}
\begin{remark*}\ 
\begin{enumerate}
\item See \cref{sec:finiteness-properties-groups} for what it means for a group to be commensurable up to finite kernel to an arithmetic group. This property in particular implies that the group is of finite type.
\item The new part of \ref{mainthm-mcg-ii} is the result on groups of \emph{simple} homotopy automorphisms. The part regarding the group $\pi_0(\hAut(X))$ of all homotopy automorphisms was proved by Triantafillou \cite[Theorem 1]{TriantafillouhAut}. Under the additional assumption that $M$ be orientable, item \ref{mainthm-mcg-i} was also stated by Triantafillou \cite[Corollary 5.3]{TriantafillouhAut}, but as explained in \cref{sec:finiteness-mcg} below, the proof had a nontrivial gap. We circumvent this issue in the proof by a new argument that relies on the part of \ref{mainthm-mcg-ii} on simple homotopy automorphisms.
\item There are versions of \cref{thm:mcg} for variants of the groups $\pi_0(\Diff(M))$ and $\pi_0(\hAut^s(X))$ such as the group $\pi_0(\Homeo(M))$ of isotopy classes of homeomorphisms, or the subgroup of $\pi_0(\hAut^s(X))$ that stabilises a set of (co)homology classes (see \cref{sec:mcg-section})
\end{enumerate}
\end{remark*}

\subsection*{On the assumptions}
We conclude with comments on the hypotheses of the main result.

\subsubsection*{Infinite fundamental groups}As mentioned previously, it is known that a finiteness result such as \cref{thm:fg pi} can fail if the fundamental group of $M$ is infinite. We elaborate on some explicit instances of this phenomenon in \cref{sec:infinite-generatedness}.

\subsubsection*{Small dimensions}Baraglia \cite{Baraglia} and Lin \cite{JianfengLin4} gave examples of smooth 1-connected $4$-manifolds $M$ for which $\pi_2(B\Diff(M))$ is not finitely generated, so the analogues of Theorems~\ref{thm:fg pi}  and~\ref{thm:codim-one-or-two} fail in dimension $4$. For $2n=2$, the result is well-known.

\subsubsection*{Odd dimensions}We expect \cref{thm:fg pi} to be also valid in all odd dimensions $d\ge7$. Some steps in our proofs, however, use that $M$ is even-dimensional in an essential way. Most notably, we rely on a general form of Galatius--Randal-Williams' work on stable moduli spaces of even-dimensional manifolds \cite{GRWstable,GRWII,GRWI} and a homological stability result for diffeomorphism groups of even-dimensional manifolds due to Friedrich \cite{Friedrich}. So far, there are no analogues of these results in odd dimensions in a sufficiently general form, although partial results in this direction are available, see \cite{PerlmutterSn,BotvinnikPerlmutter,Perlmutter,HebestreitPerlmutter}.

A further obstacle in potential strategies to extend the proof of \cref{thm:fg pi} to odd dimensions is that \cref{thm:mcg} \ref{mainthm-mcg-i} is not known to hold for manifolds with non-empty boundary, but we expect it to be true. We comment on this point in \cref{rem:relative-arithm}.

\subsection*{Acknowledgements}The authors are grateful to the referees for their comments and suggestions and to Oscar Randal-Williams for his comments, in particular for a suggestion that allowed us to extend \cref{thm:mcg} \ref{mainthm-mcg-i} from even to all dimensions $d\ge6$. MB also thanks Oscar Randal-Williams for enlightening discussions about moduli spaces of manifolds and embedding spaces at the MIT Talbot Workshop in 2019. MB and MK were supported by the European Research Council (ERC) under the European Union's Horizon 2020 research and innovation programme (grant agreement No. 756444). AK acknowledges the support of the Natural Sciences and Engineering Research Council of Canada (NSERC) [funding reference number 512156 and 512250], as well as the Research Competitiveness Fund of the University of Toronto at Scarborough. AK is supported by an Alfred J.~Sloan Research Fellowship.


\section{Finiteness properties of groups and spaces}\label{sec:recollections}We begin with preliminaries on finiteness properties of various classes of groups and spaces.

\subsection{Finiteness properties of groups}\label{sec:finiteness-properties-groups}
There are various properties of groups relevant to the proof of \cref{thm:fg pi}. We discuss them in the following subsections.

\subsubsection{Groups of finite type}Recall from the introduction that a group $G$ is \emph{of finite type} if it has an Eilenberg--MacLane space $K(G,1)$ with finitely many cells in each dimension. This in particular implies that $G$ is finitely generated. For abelian groups the converse holds:

\begin{lemma}An abelian group $A$ is of finite type if and only if it is finitely generated.\end{lemma}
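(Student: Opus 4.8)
The plan is to prove both implications, with the forward direction being essentially trivial and the reverse direction requiring a small structure-theory argument followed by a product decomposition of Eilenberg--MacLane spaces.

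First I would observe that if $A$ is of finite type, it is in particular finitely generated: a $K(A,1)$ with finitely many cells in each dimension has finitely many $1$-cells, and $\pi_1$ of a CW-complex is generated by the $1$-cells, so $A = \pi_1(K(A,1))$ is finitely generated. (This direction does not even use that $A$ is abelian.)

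For the converse, suppose $A$ is a finitely generated abelian group. By the classification of finitely generated abelian groups, $A \cong \bZ^r \oplus \bZ/n_1 \oplus \cdots \oplus \bZ/n_s$ for some $r,s \geq 0$ and integers $n_i \geq 2$. Since an Eilenberg--MacLane space for a product of groups can be taken to be the product of Eilenberg--MacLane spaces, namely $K(A \oplus B, 1) \simeq K(A,1) \times K(B,1)$, and a finite product of spaces of finite type is again of finite type (the cells of the product in a given dimension are finite sums over the finitely many ways of distributing that dimension among the factors, each contributing finitely many cells), it suffices to exhibit a finite-type model of $K(G,1)$ for each of the cyclic factors $G \in \{\bZ, \bZ/n\}$. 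For $G = \bZ$ we take $K(\bZ,1) = S^1$, which has one cell in dimensions $0$ and $1$. For $G = \bZ/n$ we take $K(\bZ/n,1) = S^\infty/(\bZ/n)$, the infinite-dimensional lens space (with $S^\infty/(\bZ/2) = \bR P^\infty$ in the case $n=2$), which has the standard CW-structure with exactly one cell in each dimension $k \geq 0$. This completes the argument.

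The only mild subtlety — and the closest thing to an obstacle — is justifying that a finite product of finite-type CW-complexes is again of finite type, and that one may pass between weak homotopy equivalence and genuine CW-models freely; both are standard facts (the product CW-structure has $\coprod_{i+j=k} (\text{$i$-cells of }X)\times(\text{$j$-cells of }Y)$ in degree $k$, a finite set when each factor is of finite type, and up to weak equivalence this is all that is needed). No genuinely hard point arises.
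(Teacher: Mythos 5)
Your proof is correct, and it departs from the paper's argument at one spot. Both reduce, via the classification of finitely generated abelian groups and $K(A\oplus B,1)\simeq K(A,1)\times K(B,1)$, to the cyclic factors, and both handle $\bZ$ by taking $S^1$. For a finite cyclic group $\bZ/n$, however, the paper does not build a model: it invokes \cref{lem:f-infty} (which appears just after the lemma being proved), noting for instance that the trivial group has finite index in $\bZ/n$, hence $\bZ/n$ is of finite type by the finite-index closure property. You instead write down the infinite lens space $S^\infty/(\bZ/n)$, which carries the standard CW-structure with one cell in each dimension. Your route is more self-contained and elementary --- it needs no forward reference to \cref{lem:f-infty} and no closure properties --- at the cost of having to supply the lens-space model and the (routine) fact that finite products of finite-type CW-complexes are of finite type, both of which you justify adequately. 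You also spell out the forward implication (finite type $\Rightarrow$ finitely generated), which the paper states in the surrounding prose rather than inside the proof; your observation that $\pi_1$ of a CW-complex with finitely many $1$-cells is finitely generated is the standard argument and is fine.
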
 
\begin{proof}By the classification of finitely generated abelian groups, it suffices to show that any cyclic group $A$ is of finite type. This is clear if $A$ is infinite since $K(\bZ,1)\simeq S^1$ and follows for instance from \cref{lem:f-infty} below for finite cyclic groups.\end{proof}

\begin{lemma}\label{lem:f-infty} Fix a short exact sequence of groups
	\[1 \lra N \lra G \lra Q \lra 1.\]
If $N$ is of finite type, then $G$ is of finite type if and only if $Q$ is of finite type. Moreover, if $G' \le G$ is a subgroup of finite index, then $G$ is of finite type if and only if $G'$ is.
\end{lemma}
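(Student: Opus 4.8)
The plan is to reduce both assertions to the standard calculus of the finiteness properties $F_n$ of groups. Recall that a group is of finite type if and only if it is of type $F_n$ for every $n$ --- i.e.\ admits a $K(G,1)$ with finite $n$-skeleton --- which in turn holds if and only if $G$ is finitely presented and of type $FP_\infty$. I will freely use two elementary facts. First, every finite group is of finite type, since it acts freely on a contractible CW-complex with finitely many cells in each dimension, for instance the infinite join $EG = G * G * G * \cdots$. Second, if $F \to E \to B$ is a fibration of spaces having the homotopy type of CW-complexes with $F$ and $B$ of finite type, then $E$ is of finite type: choosing CW models for $F$ and $B$ with finite skeleta and building $E$ cell by cell over $B$ --- attaching over each $k$-cell of $B$ a copy of $D^k \times F$ --- produces a CW model for $E$ in which only finitely many cells, the products of a cell of $B$ with a cell of $F$, occur in each dimension.

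For the first assertion, the short exact sequence gives a fibration sequence $BN \to BG \to BQ$ of aspherical spaces. If $Q$ is of finite type then, as $N$ is assumed to be, the second fact shows $BG$, and hence $G$, is of finite type. Conversely, assume $G$ (and $N$) of finite type. Then $Q$ is finitely presented: it is the quotient of the finitely presented group $G$ by the subgroup normally generated by a finite generating set of $N$, so one obtains a finite presentation of $Q$ by adding finitely many relators to one of $G$. That $Q$ is moreover of type $FP_\infty$ is the standard behaviour of this property in an extension whose kernel and total group are of type $FP_\infty$ (Bieri; see e.g.\ Brown's \emph{Cohomology of Groups}, Ch.\,VIII, or Geoghegan's \emph{Topological Methods in Group Theory}, Ch.\,7), the proof of which runs through the Lyndon--Hochschild--Serre spectral sequence $H_p(Q; H_q(N; M)) \Rightarrow H_{p+q}(G; M)$ and a dimension-shifting induction over the finitely generated $\mathbb{Z}Q$-modules $H_q(N; M)$. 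I expect this to be the main obstacle of the proof: unlike the easy direction, it cannot be handled by elementary cell-counting and genuinely requires the homological machinery. Combining the two facts gives that $Q$ is of finite type.

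For the second assertion, let $G' \le G$ have finite index. If $G$ is of finite type, the covering of a $K(G,1)$ with finite skeleta corresponding to the subgroup $G'$ is a finite-sheeted covering, hence again a CW-complex with finite skeleta, and it is a $K(G',1)$; thus $G'$ is of finite type. For the converse, suppose $G'$ is of finite type and pass to the normal core $N_0 = \bigcap_{g \in G} g G' g^{-1}$, which is a normal subgroup of $G$ of finite index contained in $G'$. Since $N_0$ has finite index in $G'$, the direction just proved shows $N_0$ is of finite type. Now the ``$Q$ of finite type implies $G$ of finite type'' half of the first assertion, applied to the extension $1 \to N_0 \to G \to G/N_0 \to 1$, yields the claim: the kernel $N_0$ is of finite type and the quotient $G/N_0$ is finite, hence of finite type.
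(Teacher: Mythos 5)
Your overall strategy is sound and genuinely different from the paper's. The paper reduces both assertions to a single black-box proposition of Dror--Dwyer--Kan on fibrations of finite-type spaces, applied once to $K(N,1)\to K(G,1)\to K(Q,1)$ and once to the covering $K(G',1)\to K(G,1)$ with fibre the finite set $G/G'$. You instead work with the group-theoretic finiteness conditions $F_n$/$FP_\infty$, proving the easy fibration direction by hand and invoking the Lyndon--Hochschild--Serre spectral sequence (via Bieri/Brown/Geoghegan) for the hard direction, then handling the finite-index statement by passing to the normal core. Both routes are legitimate, and the logical architecture of your argument is correct.

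There is, however, a concrete error in your ``first elementary fact.'' The Milnor join $EG = G * G * \cdots$ for a finite discrete group $G$ does \emph{not} have finitely many cells in each dimension. As a simplicial complex, its $k$-simplices correspond to choosing $k+1$ of the (infinitely many) join factors and one element of $G$ from each; for every $k \geq 0$ there are therefore countably infinitely many $k$-simplices, and the same holds for the quotient $BG = EG/G$. Already for $G = \mathbb{Z}/2$ the Milnor model of $S^\infty$ has infinitely many $k$-cells for every $k$, unlike the familiar CW structure with two cells per dimension. The statement you want --- that finite groups are of type $F_\infty$ --- is of course true, but it needs a different justification: for instance, start from a finite presentation complex and inductively attach finitely many $(n+1)$-cells to kill $\pi_n$, using that $\mathbb{Z}[G]$ is Noetherian so that $\pi_n$ of the $n$-skeleton (computed from the universal cover, a finite complex) is a finitely generated $\mathbb{Z}[G]$-module at each stage; equivalently, on the algebraic side, $G$ is finitely presented and of type $FP_\infty$ because one can build a resolution of $\mathbb{Z}$ over the Noetherian ring $\mathbb{Z}[G]$ by finitely generated free modules. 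This fact is load-bearing in your proof of the second assertion, where you need $G/N_0$ to be of finite type, so the gap should be repaired. (Be careful not to repair it circularly: you cannot at that point appeal to the second assertion itself via the trivial subgroup $\{e\} \leq G$.) The remainder of the argument --- the cell-by-cell construction over the base, the finite-presentability of $Q$, and the $FP_\infty$ citation --- is fine.
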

\begin{proof}
Using the fibration sequences $K(N,1)\to K(G,1) \to K(Q,1)$ and $G/G'\to K(G',1)\to K(G,1)$, this follows from \cref{prop:finite-type-fibrations} below.
\end{proof}

\subsubsection{Arithmetic groups}A \emph{linear algebraic group $\mathbf{G}$ over $\bQ$} is an algebraic subgroup of $\mathbf{GL}_n$ defined as the vanishing locus of finitely many polynomial equations with rational coefficients in the entries of the matrices and the inverse of the determinant. We denote by $\mathbf{G}_\bQ \subset \mr{GL}_n(\bQ)$ the discrete group of $\bQ$-points, i.e.~solutions to the above equations in $\mr{GL}_n(\bQ)$, and define $\mathbf{G}_\bZ \coloneqq \mathbf{G}_\bQ \cap \mr{GL}_n(\bZ)$. Following \cite[\S 1.1]{Serre}, we call a group $\Gamma$ \emph{arithmetic} if there exists an embedding $\Gamma \hookrightarrow \mr{GL}_n(\bQ)$ and an algebraic group $\mathbf{G} \subset \mathbf{GL}_n$ over $\bQ$ such that the intersection $\Gamma \cap \mathbf{G}_\bZ$ has finite index in both $\Gamma$ and $\mathbf{G}_\bZ$.

We refer to \cite[\S 1.1--\S 1.3]{Serre} for a list of properties of arithmetic groups. We only need:

\begin{theorem}[Borel--Serre]\label{thm:BorelSerre}Arithmetic groups are of finite type.\end{theorem}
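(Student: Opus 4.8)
The plan is to follow Borel and Serre's original argument via the bordification of the associated symmetric space, after first reducing to a torsion-free arithmetic group. Given an arithmetic $\Gamma \hookrightarrow \mr{GL}_n(\bQ)$ with associated algebraic group $\mathbf{G} \subset \mathbf{GL}_n$ over $\bQ$, the subgroup $\Gamma \cap \mathbf{G}_\bZ$ has finite index in both $\Gamma$ and $\mathbf{G}_\bZ$, so by the finite-index clause of \cref{lem:f-infty} it suffices to prove that $\mathbf{G}_\bZ$ is of finite type. By Minkowski's lemma (or Selberg's lemma) the group $\mathbf{G}_\bZ$ contains a torsion-free subgroup $\Gamma'$ of finite index --- for instance its intersection with the kernel of reduction $\mr{GL}_n(\bZ) \to \mr{GL}_n(\bZ/p)$ for a prime $p \ge 3$ --- so, applying \cref{lem:f-infty} again, we are reduced to showing that a torsion-free arithmetic group $\Gamma'$ is of finite type; in fact we will produce a \emph{finite} $K(\Gamma',1)$.

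Next I would set up the geometry. Write $G = \mathbf{G}(\bR)$, a real Lie group with finitely many connected components, and let $K \subset G$ be a maximal compact subgroup. By the Cartan decomposition the homogeneous space $X = G/K$ is diffeomorphic to a Euclidean space, hence contractible, and $\Gamma'$ acts on $X$ properly discontinuously; since $\Gamma'$ is torsion-free and point stabilizers are finite, the action is free. Thus $\Gamma' \backslash X$ is a manifold and a model for $K(\Gamma',1)$, but it fails to be compact when $\Gamma'$ is non-uniform, which is the essential difficulty.

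The core input is the Borel--Serre bordification: one enlarges $X$ to a manifold with corners $\bar X$ by attaching, for each rational parabolic subgroup $P \subset \mathbf{G}$, a boundary face built from the symmetric space of the Levi quotient and the nilmanifold of the unipotent radical of $P$, glued together by means of reduction theory (Siegel sets). The key properties of this construction are that $\bar X$ deformation retracts onto $X$, hence is contractible; that the $\Gamma'$-action on $X$ extends to a proper free action on $\bar X$; and that the quotient $\Gamma' \backslash \bar X$ is \emph{compact}. Granting this, $\Gamma' \backslash \bar X$ is a compact manifold with corners which is aspherical with fundamental group $\Gamma'$, and any such space is triangulable and hence admits a finite CW-structure. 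Therefore $\Gamma'$ --- and with it $\Gamma$, via the reductions above --- is of finite type.

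The main obstacle is the construction and analysis of $\bar X$: checking that the corner structure is consistent across all rational parabolics, that the action stays properly discontinuous up to the boundary, and that the quotient is compact. This is precisely the content of reduction theory and constitutes the bulk of \cite{BorelSerre}. A secondary point, relevant when $\mathbf{G}$ is not reductive, is to treat the unipotent radical $\mathbf{U} \subset \mathbf{G}$ separately: the arithmetic group $\mathbf{U}_\bZ$ is finitely generated torsion-free nilpotent, hence poly-$\bZ$ and so of type $F$, and one combines this with the reductive case by applying the extension clause of \cref{lem:f-infty} to the exact sequence $1 \to \mathbf{U}_\bZ \to \mathbf{G}_\bZ \to Q \to 1$ with $Q$ arithmetic in the reductive quotient $\mathbf{G}/\mathbf{U}$. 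For the purposes of this paper it is simplest to invoke \cite{BorelSerre} directly.
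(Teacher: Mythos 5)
Your proposal is correct and follows the same route as the paper: reduce to a torsion-free finite-index subgroup via \cref{lem:f-infty} (using Selberg/Minkowski), then invoke the Borel--Serre bordification to obtain a compact manifold-with-corners model for $K(\Gamma',1)$. The paper simply cites Serre's survey \cite{Serre} for both the torsion-free reduction and the existence of the compactification, whereas you spell out the geometry; the content is the same.
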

\begin{proof}
Any arithmetic group has a torsion-free subgroup of finite index \cite[1.3 (4)]{Serre}, so by \cref{lem:f-infty} it suffices to show the claim for torsion-free arithmetic groups. This follows from the existence of the Borel--Serre compactification \cite[1.3 (5)]{Serre}.
\end{proof}

\subsubsection{(Virtually) polycyclic and solvable groups}Other classes of groups that will play role in our arguments are polycyclic, polycyclic-by-finite, and solvable groups. To recall their definition, remember that a \emph{subnormal series} of a group $G$ is a sequence of subgroups
\[\{e\} = G_0 \le G_1 \le \cdots \le G_n = G\]
such that $G_{i-1}\le G_i$ is normal for all $i$. The quotients $G_{i}/G_{i-1}$ are the \emph{factors} of the series. In these terms, a group $G$ is called
\begin{enumerate}
		\item \emph{polycyclic} if $G$ admits a subnormal series whose factors are finitely generated abelian,
		\item \emph{polycyclic-by-finite} (or \emph{virtually polycyclic}) if $G$ admits a subnormal series whose factors are finitely generated abelian or finite,
		\item \emph{solvable} if $G$ admits a subnormal series whose factors are abelian.
	\end{enumerate}

\begin{remark}\ 
\begin{enumerate}
\item The above definition of a polycyclic group $G$ as a ``poly-(finitely generated abelian)'' group might look unusual at first sight. Note though that $G$ admits a subnormal series with finitely generated abelian quotients if and only if it admits one with cyclic quotients. We opted for the version of the definition we actually use in our arguments, but nevertheless stick to the more common term ``polycyclic''.
\item Polycyclic-by-finite groups are often defined as groups that admit a polycyclic subgroup of finite index. This definition agrees with ours (see e.g.\,\cite[Corollary 2.7 (a)]{Wehrfritz}).
\end{enumerate}
\end{remark}

To state some of the closure properties of these types of groups, we say that a  class of groups is \emph{generated by} a collection of groups if it is the smallest class of groups that contains the given collection and is closed under taking extensions, quotients, and subgroups.

\begin{lemma}\label{lem:group-classes-closure}
	$\quad$
	\begin{enumerate}
			\item Solvable groups are generated by the class of abelian groups.
		\item Polycyclic groups are generated by the class of finitely generated abelian groups.
		\item Polycyclic-by-finite groups are generated by the class of finitely generated abelian groups and finite groups.

	\end{enumerate}
\end{lemma}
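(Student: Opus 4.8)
The plan is to prove all three parts at once. Let $\cG$ stand for one of the three generating collections — all abelian groups, all finitely generated abelian groups, or all finitely generated abelian groups together with all finite groups — and let $\cD(\cG)$ denote the class of groups admitting a subnormal series all of whose factors lie in $\cG$. By the definitions recalled above, $\cD(\cG)$ is exactly the class of solvable, of polycyclic, or of polycyclic-by-finite groups, respectively, so it suffices to show that $\cD(\cG)$ coincides with the class $\langle\cG\rangle$ generated by $\cG$ under extensions, quotients, and subgroups. The first thing I would record is that in each of the three cases $\cG$ is \emph{already} closed under passing to subgroups and to quotients; the only point here that is not completely formal is the fact that a subgroup of a finitely generated abelian group is again finitely generated.

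For the inclusion $\cD(\cG)\subseteq\langle\cG\rangle$ I would induct on the length $n$ of a subnormal series $\{e\}=G_0\le G_1\le\cdots\le G_n=G$ with factors in $\cG$: the cases $n\le 1$ are clear, and for the inductive step one observes that $G_{n-1}$ inherits the truncated series and hence lies in $\langle\cG\rangle$, while $G/G_{n-1}\in\cG\subseteq\langle\cG\rangle$, so that $G\in\langle\cG\rangle$ by closure under extensions (applied to $1\to G_{n-1}\to G\to G/G_{n-1}\to 1$).

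For the reverse inclusion it suffices, since $\langle\cG\rangle$ is the smallest class with the stated closure properties, to check that $\cD(\cG)$ contains $\cG$ — immediate, via the length-one series $\{e\}\le A$ — and is itself closed under extensions, quotients, and subgroups. Closure under extensions is obtained by concatenating series: for $1\to N\to G\to Q\to 1$ with $N,Q\in\cD(\cG)$, one splices a subnormal series of $N$ with factors in $\cG$ onto the preimages in $G$ of such a series of $Q$, and the factors of the resulting series of $G$ are precisely those of the two given series. Closure under quotients follows by pushing a series of $G$ forward along a surjection $G\twoheadrightarrow Q$, its factors being quotients of the original ones and hence in $\cG$; closure under subgroups follows by intersecting a series of $G$ with a subgroup $H\le G$, using that the intersection of a normal subgroup with $H$ is normal in $H$ and that, by the second isomorphism theorem, the resulting factors embed into the original ones and so again lie in $\cG$. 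Both of these last two steps use precisely the closure of $\cG$ under quotients and under subgroups noted at the outset.

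I do not expect a serious obstacle; if anything, the one place deserving attention is the verification that the three generating collections are closed under subgroups — in particular the fact that subgroups of finitely generated abelian groups are finitely generated — since this is exactly what powers the subgroup- and quotient-closure of $\cD(\cG)$ in the polycyclic and polycyclic-by-finite cases. Everything else is routine bookkeeping with the isomorphism theorems.
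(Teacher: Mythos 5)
Your proof is correct and follows essentially the same strategy as the paper: the classes contain the generators, and one then verifies closure of solvable/polycyclic(-by-finite) groups under extensions, subgroups, and quotients. The paper dispatches the subgroup- and quotient-closure by citing Wehrfritz, whereas you supply the (standard) series-intersection and series-pushforward arguments explicitly.
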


\begin{proof}By definition these classes contain the listed groups, so it suffices to show that solvable and polycyclic(-by-finite) groups are closed under extensions, subgroups, and quotients. The first property is easy to see and the last two follow for instance from \cite[2.1]{Wehrfritz}.
\end{proof}

\subsubsection{Commensurability up to finite kernel}Recall that two groups $G$ and $G'$ are \emph{commensurable up to finite kernel} if they are equivalent with respect to the equivalence relation on the class of groups generated by isomorphism, passing to finite index subgroups, and taking quotients by finite normal subgroups.

\begin{remark}
In \cite{Sullivan} and \cite{Triantafillou}, commensurability up to finite kernel is referred to as just \emph{commensurability} (see \cite{KrannichRandalWilliams} for an elucidation of this). This property is also sometimes called \emph{S-commensurability} (see e.g.~\cite{BauesGrunewald}) or \emph{differing by finite groups} (see e.g.~\cite{Kupers}).\end{remark}
Combining \cref{lem:f-infty} and \cref{thm:BorelSerre}, the following lemma is straightforward. It summarises the properties of groups of finite type that play a role in the body of this work.

\begin{lemma}\label{lem:finite-type-groups-closure}\label{lem:f-infty-commensurable} The class of groups of finite type contains all
\begin{enumerate}
\item polycyclic-by-finite groups and
\item arithmetic groups
\end{enumerate}
and is closed under extensions. Moreover, if $G$ and $G'$ are commensurable up to finite kernel, then $G$ is of finite type if and only $G'$ is of finite type.
\end{lemma}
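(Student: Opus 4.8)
The plan is to bootstrap everything from facts already in hand: that a finitely generated abelian group is of finite type (the abelian-groups lemma above), that \cref{lem:f-infty} controls both extensions with finite-type kernel and finite-index subgroups, and that \cref{thm:BorelSerre} disposes of arithmetic groups. The one auxiliary observation needed first is that \emph{every finite group $F$ is of finite type}: the trivial subgroup $\{e\}\le F$ has finite index and is evidently of finite type (a point is a $K(\{e\},1)$), so the second part of \cref{lem:f-infty} applies. (Alternatively, the simplicial classifying space of $F$ has $|F|^n$ nondegenerate $n$-simplices.) Closure of the class of finite-type groups under extensions is then immediate: given $1\to N\to G\to Q\to 1$ with $N$ and $Q$ of finite type, the hypothesis of the first part of \cref{lem:f-infty} holds, so $G$ is of finite type if and only if $Q$ is, which it is.

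For (1), let $G$ be polycyclic-by-finite and fix a subnormal series $\{e\}=G_0\le G_1\le\cdots\le G_n=G$ whose factors $G_i/G_{i-1}$ are finitely generated abelian or finite; I induct on $n$. The case $n=0$ is trivial. For $n\ge 1$, the group $G_{n-1}$ inherits a subnormal series of length $n-1$ of the same kind and so is of finite type by induction; the quotient $G/G_{n-1}$ is finitely generated abelian or finite, hence of finite type by the abelian-groups lemma respectively by the observation above; and then $G$ is of finite type by closure under extensions applied to $1\to G_{n-1}\to G\to G/G_{n-1}\to 1$. For (2) there is nothing to prove beyond citing \cref{thm:BorelSerre}.

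It remains to treat commensurability up to finite kernel, and for this it suffices to check that ``being of finite type'' is unchanged under each of the three moves generating the equivalence relation. It is obviously an isomorphism invariant. If $G'\le G$ has finite index, then by the second part of \cref{lem:f-infty} the group $G$ is of finite type if and only if $G'$ is. If $1\to N\to G\to Q\to 1$ with $N$ finite, then $N$ is of finite type by the observation above, so by the first part of \cref{lem:f-infty} the group $G$ is of finite type if and only if $Q$ is. Hence ``of finite type'' is constant on commensurability-up-to-finite-kernel classes, as claimed.

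None of these steps presents a genuine difficulty; the lemma is essentially a repackaging of \cref{lem:f-infty} and \cref{thm:BorelSerre}. If there is a point to flag, it is only that finite groups must be handled explicitly---they are the base case lurking in both the polycyclic-by-finite induction and the finite-kernel move---which the finite-index clause of \cref{lem:f-infty} does cleanly.
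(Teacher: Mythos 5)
Your proof is correct and follows exactly the route the paper intends: the paper itself gives no proof, stating only that the lemma is ``straightforward'' by combining \cref{lem:f-infty} and \cref{thm:BorelSerre}, and your argument supplies precisely the missing details (finite groups via the finite-index clause, polycyclic-by-finite by induction along a subnormal series, and the three generating moves for commensurability up to finite kernel checked one at a time).
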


\subsubsection{Groups that have polycyclic solvable subgroups}\label{def:polyclic-solvable-subgroups}To discuss the final class of groups that features in our later arguments, we say that a group $G$ \emph{has polycyclic solvable subgroups} if every solvable subgroup $H\le G$ is polycyclic. 

\begin{lemma}\label{lem:polycyclic-sovable-subgroups}The class of groups which have polycyclic solvable subgroups contains
\begin{enumerate}
\item polycyclic-by-finite groups and
\item arithmetic groups
\end{enumerate}
and it is closed under extensions and passing to subgroups. Moreover, if $G'\le G$ is a finite index subgroup, then $G$ has polycyclic solvable subgroups if and only if this holds for $G'$. 
\end{lemma}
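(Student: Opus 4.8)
The plan rests on one elementary observation, which I isolate first: \emph{a solvable group which contains a polycyclic subgroup of finite index is itself polycyclic}. To see this, recall that polycyclic groups satisfy the maximal condition on subgroups, and that a group containing a subgroup of finite index with this property has it as well; hence a solvable group $H$ as in the observation is Noetherian, so every term of its derived series is finitely generated. Since the derived series is a subnormal series with abelian factors, which are now finitely generated, $H$ is polycyclic. Granting this, closure under passing to subgroups is immediate, since a solvable subgroup of a subgroup $H\le G$ is a solvable subgroup of $G$.

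I would next dispatch (i), closure under extensions, and the finite-index statement. For (i), if $G$ is polycyclic-by-finite and $H\le G$ is solvable, then $H$ is polycyclic-by-finite by \cref{lem:group-classes-closure}(iii), hence contains a polycyclic subgroup of finite index, hence is polycyclic by the observation. For extensions, take $1\to N\to G\to Q\to 1$ with $N$ and $Q$ having polycyclic solvable subgroups and $H\le G$ solvable; then $H\cap N$ is a solvable subgroup of $N$ and $H/(H\cap N)\cong HN/N$ is a solvable subgroup of $Q$, so both are polycyclic, and $H$ is polycyclic because polycyclic groups are closed under extensions (\cref{lem:group-classes-closure}(ii)). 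For the finite-index statement, the forward implication is closure under subgroups, and conversely, if $G'\le G$ has finite index and has polycyclic solvable subgroups, then any solvable $H\le G$ meets $G'$ in a finite-index subgroup $H\cap G'$, which is a solvable subgroup of $G'$ and hence polycyclic, so $H$ is polycyclic by the observation.

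It remains to prove (ii). By the definition of an arithmetic group there is an embedding $\Gamma\hookrightarrow\mr{GL}_n(\bQ)$ and a $\bQ$-algebraic subgroup $\mathbf{G}\subset\mathbf{GL}_n$ such that $\Gamma_0\coloneqq\Gamma\cap\mathbf{G}_\bZ$ has finite index in $\Gamma$; in particular $\Gamma_0$ is a subgroup of $\mr{GL}_n(\bZ)$. By the finite-index statement just proved, it suffices to show that $\mr{GL}_n(\bZ)$ has polycyclic solvable subgroups, and for this I would invoke the classical theorem of Mal'cev that every solvable subgroup of $\mr{GL}_n(\bZ)$ is polycyclic (see e.g.\ \cite{Wehrfritz}): a solvable $H\le\Gamma$ meets $\Gamma_0$ in a finite-index subgroup $H\cap\Gamma_0\le\mr{GL}_n(\bZ)$, which is then polycyclic, whence $H$ is polycyclic by the observation. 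The main point — and the only step that is not pure bookkeeping with the closure properties recorded in \cref{sec:recollections} — is this appeal to Mal'cev's theorem, which genuinely uses that $\bZ$ is finitely generated: over $\bQ$ the statement fails (witness the additive group $\bQ\le\mr{GL}_2(\bQ)$), so the integrality of the coefficients is essential.
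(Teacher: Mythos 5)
Your proof is correct and takes essentially the same route as the paper's: the non-trivial input is Mal'cev's theorem that solvable subgroups of $\mr{GL}_n(\bZ)$ are polycyclic, and the rest is closure bookkeeping. The one organizational difference is that you isolate the observation that a solvable group with a polycyclic subgroup of finite index is itself polycyclic (via the maximal condition on subgroups), and apply it uniformly to (i), (ii), and the finite-index claim; the paper instead handles the finite-index claim by passing to a normal finite-index subgroup and invoking closure under extensions, and handles (i) by building a polycyclic-by-finite group from its subnormal series — both routes are correct and equally elementary.
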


\begin{proof}We first show the two closure properties. That this class is closed under taking subgroups is clear. To prove the closure property for extensions, take a short exact sequence
	$1 \to N \to G \to G/N \to 1$
	where $N$ and $G/N$ have polycyclic solvable subgroups. For a solvable subgroup $H \subset G$, we have an induced exact sequence
	$1 \to N \cap H \to H \to H/(N \cap H) \to 1$,
	where $N \cap H \subset N$ is solvable because it is a subgroup of $H$, and $H/(N \cap H) \subset G/N$ is solvable because it is a quotient of $H$. Thus both $N \cap H$ and $H/(N \cap H)$ are polycyclic and since polycyclic groups are closed under extensions (see \cref{lem:group-classes-closure}), the same holds for $H$.
	
	To establish the claim regarding finite index subgroups $G'\le G$, it suffices to show that if $G'$ has polycyclic solvable subgroups, then so does $G$. By passing to a finite index subgroup, we may assume that $G'$ is normal in $G$ in which case the claim follows from the closure property for extensions, since $G'$ and $G/G'$ have polycyclic solvable subgroups; the former by assumption and the latter because it is finite.
	
	Since finitely generated abelian and finite groups clearly have polycyclic solvable subgroups, it follows that polycyclic-by-finite groups have polycyclic solvable subgroups. This leaves us with showing that an arithmetic group $\Gamma$ has polycyclic solvable subgroups.  Malcev \cite[Theorem\,2]{Malcev} shows this property for subgroups of $\mr{GL}_n(\bZ)$, and as any arithmetic group is a subgroup of  $\mr{GL}_n(\bZ)$ after taking a finite index subgroup, this implies the general case.	
	\end{proof}

\subsection{Finiteness properties of spaces}\label{sec:finiteness-spaces} Turning from groups to spaces, we first discuss:
\subsubsection{Spaces of finite type}
As in the introduction, we define a space $X$ to be \emph{of finite type} if it is weakly equivalent to a CW-complex with finitely many cells in each dimension.

\medskip

The following appears as Proposition 2.5 and 2.9 in \cite{DrorDwyerKan}.

\begin{proposition}\label{prop:finite-type-fibrations}\label{prop:finite-type-homotopy-groups}
\quad
\begin{enumerate}
\item Let $f\colon X\ra Y$ be a $0$-connected map. If $\hofib_y(f)$ is of finite type for all $y\in Y$, then $X$ is of finite type if and only if $Y$ is of finite type.
\item Suppose $X$ is a connected space such that $\pi_k(X)$ is finitely generated for all $k\ge2$, then $X$ is of finite type if and only if $\pi_1(X)$ is of finite type.
\end{enumerate}
\end{proposition}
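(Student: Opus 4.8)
The plan is to deduce part (2) from part (1), and to prove part (1) by replacing $f$ by a fibration and treating the two implications separately. For (2) granting (1): given connected $X$, let $u\colon X\to B\pi_1(X)$ be the map classifying the universal cover. It is $0$-connected, and since $B\pi_1(X)$ is connected all of its homotopy fibres are weakly equivalent to $\widetilde X$. Now $\widetilde X$ is simply connected with $\pi_k(\widetilde X)\cong\pi_k(X)$ finitely generated for all $k$, hence of finite type by the classical theorem of Serre: one inducts up the Postnikov tower of $\widetilde X$, using that $H_*(K(A,m);\bZ)$ is degreewise finitely generated for finitely generated abelian $A$, so that finitely many cells suffice at each stage. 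Applying (1) to $u$ then shows $X$ is of finite type if and only if $B\pi_1(X)$ is, i.e. if and only if $\pi_1(X)$ is of finite type.

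For (1), replace $f$ by a Hurewicz fibration $p\colon E\to Y$ with $E\simeq X$; its fibres are the homotopy fibres of $f$, hence of finite type, and none is empty since $f$ is $0$-connected, so $p$ is surjective. A space is of finite type exactly when it has finitely many path components, each of finite type; in either implication of (1) one of $E$, $Y$ is assumed of finite type, so $\pi_0(Y)$ is finite (using $0$-connectedness of $f$ when the hypothesis is on $E$), and we may pass to path components and assume $Y$ connected, with fibre $F$ of finite type. The implication ``$Y$ of finite type $\Rightarrow E$ of finite type'' is then the classical fact that the total space of a fibration over a CW complex $Y$ whose fibre has the homotopy type of a CW complex admits a CW structure with exactly one $(i+j)$-cell for each $i$-cell of $Y$ and each $j$-cell of a fixed CW model of the fibre (built over the skeleta of $Y$, using that $p$ is fibre-homotopy trivial over each cell). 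Taking $Y$ and $F$ of finite type, this model has $\sum_{i+j=k}(\#\,i\text{-cells of }Y)\cdot(\#\,j\text{-cells of }F)$, a finite number, of cells in dimension $k$, so $E$ is of finite type.

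For the converse, ``$E$ of finite type $\Rightarrow Y$ of finite type,'' I would use descent along $p$. As $p$ is surjective on $\pi_0$ it is an effective epimorphism of spaces, so $Y$ is the homotopy colimit over $\Delta^{\mathrm{op}}$ of the \v{C}ech nerve of $p$, whose space of $n$-simplices is the $(n+1)$-fold homotopy fibre product of $E$ over $Y$. That fibre product is the total space of a fibration over $E$ (projection to the first factor) with fibre $F^{\times n}$; since $E$ is of finite type and $F^{\times n}$ is (a finite product of finite-type spaces being of finite type), the implication just proved makes it of finite type. Thus $Y$ is a homotopy colimit of a simplicial space that is levelwise of finite type, and it remains to note that any such homotopy colimit is of finite type: filter the (fat) realization by skeleta, so that the passage from the $(n-1)$st stage to the $n$th attaches one $(n+j)$-cell for each $j$-cell of the $n$th space --- finitely many cells in each dimension, with only finitely many nonzero stages below a given dimension. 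Hence $Y$ is of finite type.

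The main obstacle is exactly this reverse implication of (1): one cannot build a finite-type CW model of $Y$ directly from those of $E$ and $F$, because $E$ being of finite type carries no information about $\pi_1(E)$ being of finite type \emph{as a group}, so a naïve Moore--Postnikov induction over the fibre runs aground on fundamental-group finiteness. The descent argument circumvents this by reconstructing $Y$ as a homotopy colimit of spaces already known to be of finite type, leaving only the bookkeeping in the two cell-counting statements --- for total spaces of fibrations and for realizations of simplicial spaces --- each of which merely records that finitely many cells survive in every fixed dimension.
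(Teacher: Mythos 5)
The paper does not prove this proposition: it cites it as Propositions 2.5 and 2.9 of Dror--Dwyer--Kan, so there is no ``paper's proof'' to compare against in detail, and your argument should be judged as a self-contained proof. As such it is essentially correct. The deduction of (ii) from (i) via the classifying map $X \to B\pi_1(X)$, together with Serre's theorem for the simply connected universal cover, is standard. In (i), the forward direction (base and fibre of finite type imply total space of finite type) is the usual skeletal cell count. Your \v{C}ech-descent argument for the reverse direction is the genuinely interesting part, and it nicely sidesteps the obstacle you identify: knowing $E$ is of finite type gives no direct control on whether $\pi_1(Y)$ is of finite type as a group, so a Moore--Postnikov induction over $Y$ cannot get started. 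You instead reconstruct $Y$ as $\lvert \check{C}_\bullet(p)\rvert$, observe that the $(n{+}1)$-fold fibre power is a fibration over $E$ with fibre $F^{\times n}$, hence of finite type by the forward direction, and conclude from finite-typeness of the levels of the nerve.

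The one place that needs more care than your sketch suggests is the final claim that the (fat) realization of a levelwise finite-type semi-simplicial space is of finite type. A naive ``one $(n{+}j)$-cell for each $j$-cell of $X_n$'' count presumes finite-type CW models for all the $X_n$ with \emph{simultaneously cellular} face maps, and strictifying the semi-simplicial identities after cellular approximation is a coherence issue you should not wave away. The clean route is: $\|X_\bullet\|_n$ is the homotopy pushout of $\|X_\bullet\|_{n-1} \leftarrow \partial\Delta^n \times X_n \to \Delta^n \times X_n$, all three of finite type; a homotopy pushout of finite-type spaces is of finite type (pick finite-type CW models, make the two maps out of the corner cellular, and use a mapping cylinder to turn one of them into a subcomplex inclusion). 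Tracking the construction, the new cells added in passing from a CW model $Z_{n-1}$ of $\|X_\bullet\|_{n-1}$ to a CW model $Z_n$ of $\|X_\bullet\|_n$ are of the form (open top cell of $\Delta^n$)$\times e^j$, hence have dimension $\geq n$, so $\bigcup_n Z_n$ is a CW model of $\|X_\bullet\|$ with finitely many cells in each dimension. With this amendment, your proof is complete and correct, and it gives a pleasant alternative to simply quoting Dror--Dwyer--Kan.
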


\begin{remark}In \cref{prop:finite-type-fibrations} and henceforth, we call a homotopy group $\pi_k(X)$ finitely generated if it is finitely generated \emph{as an abelian group}, not just as a $\bZ[\pi_1(X)]$-module.\end{remark}

We often apply \cref{prop:finite-type-fibrations} to truncations of spaces. Recall that an \emph{$n$-truncation} of a space $X$ is a space $\tau_{\le n}X$ together with a map $\tau_{\le n}\colon X\ra \tau_{\le n}X$ such that at all basepoints the induced map $\pi_k(X)\ra \pi_{k}(\tau_{\le n}X)$ is an isomorphism for $0\le k\le n$ and $\pi_k(\tau_{\le n}X)$ vanishes for $k>n$. Every space has a unique $n$-truncation up to weak homotopy equivalence. 

\begin{corollary}\label{cor:fg-homotopy-implies-fg-homology} Fix $n\ge1$ and let $X$ be a connected space. If $\pi_k(X)$ is of finite type for $ k\le n$, then $\tau_{\le n}X$ is of finite type. Consequently, for every $\bZ[\pi_1(X)]$-module $A$ that is finitely generated as an abelian group, the groups $H_k(X;A)$ are finitely generated for $0 \leq k \leq n$.\end{corollary}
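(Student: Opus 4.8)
The plan is to obtain the first assertion as an immediate consequence of the second part of \cref{prop:finite-type-fibrations}, and then to compare the homology of $X$ with that of $\tau_{\le n}X$ through degree $n$.

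For the first statement, observe that $\tau_{\le n}X$ is connected (since $X$ is), that $\pi_k(\tau_{\le n}X)\cong\pi_k(X)$ for $k\le n$, and that $\pi_k(\tau_{\le n}X)=0$ for $k>n$. Since being of finite type implies finite generation for groups, the hypothesis guarantees that $\pi_k(\tau_{\le n}X)$ is finitely generated for every $k\ge2$. Hence the second part of \cref{prop:finite-type-fibrations} applies to $\tau_{\le n}X$ and shows that it is of finite type if and only if $\pi_1(\tau_{\le n}X)\cong\pi_1(X)$ is, which holds by assumption (taking $k=1$).

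For the homological consequence, I would first record the general fact that if $Y$ is a space of finite type and $B$ is a $\bZ[\pi_1(Y)]$-module that is finitely generated as an abelian group, then $H_k(Y;B)$ is finitely generated for every $k$: choosing a CW model for $Y$ with finitely many cells in each dimension, the cellular chain complex $C_*(\widetilde Y)$ of its universal cover consists of finitely generated free $\bZ[\pi_1(Y)]$-modules, so $C_*(\widetilde Y)\otimes_{\bZ[\pi_1(Y)]}B$ is a complex of finitely generated abelian groups computing $H_*(Y;B)$, and each homology group is a subquotient of a finitely generated abelian group. Applying this to $Y=\tau_{\le n}X$ (the isomorphism $\pi_1(X)\cong\pi_1(\tau_{\le n}X)$ lets us regard $A$ as a module over $\pi_1(\tau_{\le n}X)$) shows that $H_k(\tau_{\le n}X;A)$ is finitely generated for all $k$.

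It then remains to see that $H_k(X;A)\cong H_k(\tau_{\le n}X;A)$ for $k\le n$. The truncation map $\tau_{\le n}\colon X\to\tau_{\le n}X$ is $(n+1)$-connected --- it is an isomorphism on $\pi_k$ for $k\le n$ and surjective on $\pi_{n+1}$, the latter group being zero --- so its homotopy fibre $F$ is $n$-connected. In the Serre spectral sequence of this fibration, converging to $H_*(X;A)$ with $A$ pulled back from the base, the rows $1\le q\le n$ vanish because $\widetilde H_q(F;\bZ)=0$ in that range, which forces the edge homomorphism to be an isomorphism in total degrees $\le n$ and yields the claim; alternatively one can invoke the relative Hurewicz theorem with local coefficients. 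Combining the two steps shows that $H_k(X;A)$ is finitely generated for $0\le k\le n$. There is no genuine obstacle here: the only points needing a little care are the invariance of local-coefficient homology under a highly connected map and the observation that a finite-type CW complex has degreewise finitely generated homology with finitely generated coefficients.
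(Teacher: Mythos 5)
Your proof is correct and follows essentially the same route as the paper: apply \cref{prop:finite-type-homotopy-groups}(ii) to $\tau_{\le n}X$, note that a finite-type CW complex has degreewise finitely generated homology with finitely generated local coefficients by cellular chains of the universal cover, and transfer this to $X$ through degree $n$ using the fact that the truncation map induces an isomorphism on homology in that range. The paper states the latter two ingredients without proof; you supply the details (the Serre spectral sequence of the $n$-connected homotopy fibre, and the tensor-of-cellular-chains computation), which is a harmless elaboration rather than a different method.
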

\begin{proof}
To obtain the first part, apply \cref{prop:finite-type-homotopy-groups} (ii) to the truncation $\tau_{\le n}X$. The second part follows because $\tau_{\le n}\colon X\to\tau_{\leq n}X$ induces an isomorphism in homology in degrees $\leq n$, and spaces of finite type have degreewise finitely generated homology groups with coefficients in local systems of the type in the statement, which one sees using cellular homology.
\end{proof}

\subsubsection{From homology groups to homotopy groups}In general, finiteness properties of the homology groups of a space $X$ need not imply similar properties for the homotopy groups. If $\pi_1(X)$ is finite, however, this is often the case, such as in the following lemma.

\begin{lemma}\label{lem:fg-homology-implies-fg-homotopy-finite} Let $X$ be a connected space with $\pi_1(X)$ finite. If $H_k(X;A)$ is finitely generated for all finitely generated $\bZ[\pi_1(X)]$-modules $A$ and $k \leq n$, then the same holds for $\pi_k(X)$.\end{lemma}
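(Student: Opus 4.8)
The plan is to reduce to the universal cover and then run an inductive Serre-class argument. Let $p\colon \widetilde X\to X$ be the universal cover; since $\pi_1(X)$ is finite, $\widetilde X\to X$ is a finite covering, so $C_*(\widetilde X)$ is a finitely generated free $\bZ[\pi_1(X)]$-module in each degree, and the hypothesis applied to the regular representation $A=\bZ[\pi_1(X)]$ gives that $H_k(\widetilde X;\bZ)=H_k(X;\bZ[\pi_1(X)])$ is a finitely generated abelian group for $k\le n$. Since $\widetilde X$ is simply connected and $\pi_k(\widetilde X)\cong\pi_k(X)$ for $k\ge 2$, it suffices to show: a simply connected space $Y$ with $H_k(Y;\bZ)$ finitely generated for $k\le n$ has $\pi_k(Y)$ finitely generated for $k\le n$.

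For the simply connected statement I would use the mod-$\mathcal C$ Hurewicz theorem with $\mathcal C$ the Serre class of finitely generated abelian groups, which is acceptable since it is closed under subgroups, quotients, extensions, and the relevant tensor/Tor operations. Argue by induction on $k$. For $k=1$ there is nothing to prove, and $\pi_2(Y)\cong H_2(Y;\bZ)$ by Hurewicz. For the inductive step, suppose $\pi_j(Y)$ is finitely generated for $j<k\le n$. Build the Postnikov-type approximation by killing homotopy groups: let $Y\to Y\langle k-1\rangle$ be the $(k-1)$-connected cover, i.e.\ the fibre of $Y\to\tau_{\le k-1}Y$. The base $\tau_{\le k-1}Y$ has finitely generated homotopy groups, hence (being simply connected) finitely generated homology groups in all degrees by the easy direction of the argument (repeated use of the Serre spectral sequence for the tower of fibrations with Eilenberg--MacLane fibres $K(\pi_j Y,j)$, whose homology is finitely generated in each degree). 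The Serre spectral sequence of $Y\langle k-1\rangle\to Y\to\tau_{\le k-1}Y$ then shows, degree by degree, that $H_j(Y\langle k-1\rangle;\bZ)$ is finitely generated for $j\le n$: all $E^2$-terms $H_p(\tau_{\le k-1}Y;H_q(Y\langle k-1\rangle))$ with $q<j$ are finitely generated by induction on $j$, the target $H_j(Y)$ is finitely generated by hypothesis, so the remaining edge term $H_j(Y\langle k-1\rangle)$ is squeezed between finitely generated groups and hence lies in $\mathcal C$. Since $Y\langle k-1\rangle$ is $(k-1)$-connected, mod-$\mathcal C$ Hurewicz gives $\pi_k(Y)=\pi_k(Y\langle k-1\rangle)\cong H_k(Y\langle k-1\rangle;\bZ)$ modulo $\mathcal C$, so $\pi_k(Y)$ is finitely generated, completing the induction.

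The main obstacle is the bookkeeping in the spectral sequence step: one must be careful that the induction is organized correctly (an inner induction on the homological degree $j\le n$ for fixed $k$, nested inside the outer induction on $k$) so that every $E^2$-entry feeding into $H_j(Y\langle k-1\rangle)$ is already known to be finitely generated, and that one is genuinely allowed to invoke the mod-$\mathcal C$ Hurewicz and spectral-sequence comparison theorems for the class of finitely generated abelian groups. An alternative that avoids connected covers is to induct directly up the Postnikov tower of $Y$ and feed the homology hypothesis in through the fibration $K(\pi_n Y,n)\to \tau_{\le n}Y\to\tau_{\le n-1}Y$, but the connected-cover formulation above keeps the simply-connectedness hypotheses cleanest. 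Either way, the finite-covering reduction at the start is what makes the hypothesis on all finitely generated $\bZ[\pi_1(X)]$-modules exactly strong enough: it is used only through the single module $\bZ[\pi_1(X)]$.
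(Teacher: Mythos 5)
Your proof is correct and follows essentially the same route as the paper: pass to the finite universal cover via the single module $A=\bZ[\pi_1(X)]$, then apply the Hurewicz theorem modulo the Serre class of finitely generated abelian groups. The paper simply cites that classical theorem where you unpack its proof by induction up the Postnikov/connected-cover tower, which is a correct (if more detailed) unfolding of the same argument.
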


\begin{proof}As $\pi_1(X)$ is finite, it suffices to show that for $2\leq k\leq n$ the homotopy groups $\pi_k(X)\cong\pi_k(\widetilde{X})$ of the universal cover $\widetilde{X}$ are finitely generated. As $H_k(X;\bZ[\pi_1(X)]) \cong H_k(\widetilde{X};\bZ)$, the groups $H_k(\widetilde{X};\bZ)$ are finitely generated for $k\leq n$, so $\pi_k(\widetilde{X})$ is finitely generated for $k\leq n$ by the Hurewicz theorem modulo the Serre class of finitely generated abelian groups.
\end{proof}

\cref{lem:fg-homology-implies-fg-homotopy-finite} has the following useful corollary.

\begin{corollary}\label{coro:pi1-finite-pik-fg} If a space $X$ is weakly homotopy equivalent to a connected finite CW-complex with finite fundamental group, then the groups $\pi_k(X)$ are finitely generated for $k \geq 1$.\end{corollary}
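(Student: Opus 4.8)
The plan is to deduce this from \cref{lem:fg-homology-implies-fg-homotopy-finite}, so the real task is to verify that $X$ has degreewise finitely generated homology with coefficients in an arbitrary finitely generated $\bZ[\pi_1(X)]$-module. First I would replace $X$ by a connected finite CW-complex $K$ with $\pi_1(K)$ finite to which it is weakly equivalent. Since weak equivalences induce isomorphisms on all homotopy and homology groups (also with local coefficients), this substitution is harmless, and in particular $\pi_1(X)\cong\pi_1(K)$ is finite, hence finitely generated; this already settles the case $k=1$.

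Next, fix a finitely generated $\bZ[\pi_1(K)]$-module $A$. Because $\pi_1(K)$ is finite, the group ring $\bZ[\pi_1(K)]$ is finitely generated as an abelian group, so $A$ is finitely generated as an abelian group as well. The cellular chain complex $C_*(\widetilde{K})$ of the universal cover is a complex of free $\bZ[\pi_1(K)]$-modules with one free generator per cell of $K$; as $K$ has only finitely many cells, each $C_k(\widetilde{K})$ is finitely generated and free over $\bZ[\pi_1(K)]$. Hence $C_k(\widetilde{K})\otimes_{\bZ[\pi_1(K)]}A$ is a finitely generated abelian group for every $k$, and since $\bZ$ is Noetherian its subquotients are finitely generated; in particular $H_k(X;A)\cong H_k\big(C_*(\widetilde{K})\otimes_{\bZ[\pi_1(K)]}A\big)$ is finitely generated for all $k$.

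With this in hand, applying \cref{lem:fg-homology-implies-fg-homotopy-finite} with $n$ arbitrary shows that $\pi_k(X)$ is finitely generated for all $k\le n$, and since $n$ was arbitrary this gives the conclusion for all $k\ge 2$; combined with the observation above for $k=1$, this completes the proof. I do not anticipate a substantive obstacle; the only step requiring a moment of care is the passage from ``finitely generated over $\bZ[\pi_1(X)]$'' to ``finitely generated over $\bZ$'', which is precisely where the finiteness of $\pi_1(X)$ is used — and, as the introduction recalls, is exactly the hypothesis without which the statement fails (e.g.\ for the high-dimensional torus).
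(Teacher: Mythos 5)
Your proof is correct and follows the route the paper clearly intends: verify the hypothesis of \cref{lem:fg-homology-implies-fg-homotopy-finite} using the finite cellular chain complex of the universal cover, then apply that lemma. The only slight redundancy is that you verify the homology hypothesis for all finitely generated $\bZ[\pi_1(X)]$-modules $A$, whereas inspecting the proof of \cref{lem:fg-homology-implies-fg-homotopy-finite} shows only the case $A=\bZ[\pi_1(X)]$ is actually used (equivalently, that $\widetilde{K}$ is a finite CW complex); but the general verification is cheap and matches the lemma as stated, so this is not a defect.
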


\subsubsection{Finiteness properties of section spaces} Fixing a cofibration $A \hookrightarrow B$, a fibration $p \colon E \to B$ with nonempty fibres, and a section $s_0$ of $p|_A \colon p^{-1}(A) \to A$, we denote by $\mr{Sect}_A(p)$ the space in the compact-open topology of sections $s$ of $p \colon E \to B$ such that $s|_A=s_0$.

\begin{lemma}\label{lem:section-spaces-finite-CW}
For a finite CW pair $(B,A)$ of relative dimension $r$ and $p \colon E \to B$ a fibration with fibre $F$, the  following holds:
\begin{enumerate}
\item If at all basepoints $\pi_k(F)$ is finite for all $0\le k \leq r$, then $\pi_0(\mr{Sect}_A(p))$ is finite.
\item If at all basepoints $\pi_k(F)$ is  polycyclic-by-finite for $k=1$ and finitely generated for $2\le k\le r+1$, then at all basepoints $\pi_1(\mr{Sect}_A(p))$ is polycyclic-by-finite.
\item Let $m\geq 2$. If at all basepoints $\pi_k(F)$ is finitely generated  for $m\le k \le m+r$, then at all basepoints the group $\pi_m(\mr{Sect}_A(p))$ is finitely generated.
\end{enumerate}
\end{lemma}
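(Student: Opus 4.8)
The plan is to induct on the number of cells of the relative CW pair $(B,A)$, reducing the general case to the case of attaching a single cell, and to use the fibration-type behaviour of section spaces under such cell attachments together with the closure properties of the relevant classes of groups established in \cref{lem:f-infty}, \cref{lem:group-classes-closure}, and \cref{lem:section-spaces-finite-CW}'s hypotheses.

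\medskip

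\emph{Setup and induction.} First I would recall the standard fact that if $B = B' \cup_{\partial D^j} D^j$ is obtained from $B'$ by attaching a single $j$-cell along a map $\varphi \colon S^{j-1} \to B'$, and $A \subseteq B'$, then restriction of sections gives a fibration (up to the usual cofibrancy caveats, which hold here since everything is a CW pair)
\[
\mr{Sect}_A(p) \lra \mr{Sect}_A(p|_{B'}),
\]
whose fibre over a section $s'$ is the space of sections of the pulled-back fibration over $D^j$ that agree with $s'$ on $\partial D^j = S^{j-1}$. Since $D^j$ is contractible, this fibre is identified with the space of paths in (a space weakly equivalent to) $\mr{Map}_{\varphi}(S^{j-1}, F)$-type data; more precisely, pulling $p$ back over $D^j$ gives a trivial fibration $D^j \times F \to D^j$, and the fibre of the restriction map is
\[
\mr{hofib}\bigl(\Map(D^j, F) \to \Map(S^{j-1}, F)\bigr) \simeq \Omega^j F
\]
(based at the relevant point, using that $D^j/S^{j-1} \simeq S^j$), with $\pi_m$ of this fibre being $\pi_{m+j}(F)$ for $m \ge 0$. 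Starting the induction from $B = A$ (where $\mr{Sect}_A(p)$ is a point, so all three statements are vacuous) and attaching cells of dimension $j$ with $0 \le j \le r$ one at a time, at each stage I apply the long exact sequence of the above fibration together with \cref{prop:finite-type-fibrations}-style reasoning at the level of homotopy groups.

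\medskip

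\emph{Carrying out the three cases.} For (i): if $\pi_k(F)$ is finite for $0 \le k \le r$, then each fibre $\Omega^j F$ has $\pi_0 = \pi_j(F)$ finite for $0 \le j \le r$, and inductively $\pi_0(\mr{Sect}_A(p|_{B'}))$ is finite; the exact sequence of sets/groups $\pi_1(\text{base}) \to \pi_0(\text{fibre}) \to \pi_0(\text{total}) \to \pi_0(\text{base})$ then exhibits $\pi_0(\mr{Sect}_A(p))$ as finite, using that a set with finite image and finite fibres of the action is finite. For (ii): the fibre $\Omega^j F$ has $\pi_1 = \pi_{j+1}(F)$, which for $0 \le j \le r$ is $\pi_{j+1}(F)$ with $j+1$ ranging in $[1, r+1]$; by hypothesis this is polycyclic-by-finite when $j+1 = 1$ and finitely generated abelian (hence polycyclic-by-finite) when $2 \le j+1 \le r+1$, and $\pi_2$ of the fibre is $\pi_{j+2}(F)$, finitely generated for the relevant range — wait, one must be slightly careful about $\pi_2(\Omega^j F) = \pi_{j+2}(F)$ when $j = r$, i.e. $\pi_{r+2}(F)$, which is not covered; this is handled because at the last stage only the cokernel/kernel structure at $\pi_1$ matters, and the relevant piece of the long exact sequence is $\pi_2(\text{base}) \to \pi_1(\text{fibre}) \to \pi_1(\text{total}) \to \pi_1(\text{base})$, so $\pi_1(\mr{Sect}_A(p))$ sits in an extension of a subgroup of $\pi_1(\text{base})$ (polycyclic-by-finite by induction) by a quotient of $\pi_1(\text{fibre})$ (polycyclic-by-finite), hence is polycyclic-by-finite by \cref{lem:group-classes-closure}(iii) and the closure of this class under extensions. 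For (iii): analogously, $\pi_m(\Omega^j F) = \pi_{m+j}(F)$ is finitely generated for $0 \le j \le r$ provided $\pi_k(F)$ is finitely generated for $m \le k \le m+r$, so $\pi_m(\mr{Sect}_A(p))$ is an extension of a finitely generated group (subgroup of $\pi_m(\text{base})$, finitely generated by induction and by the fact that finitely generated abelian groups — or more relevantly: since $m \ge 2$ all these groups are abelian? no, $\pi_m$ is abelian for $m \ge 2$, so subgroups need a moment's thought) by a finitely generated group; here I use that for $m \ge 2$ the groups $\pi_m$ are abelian, a subgroup of a finitely generated abelian group is finitely generated, and an extension of finitely generated abelian groups is finitely generated.

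\medskip

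\emph{Main obstacle.} The routine part is the homotopy-group bookkeeping; the point requiring the most care is the identification of the fibre of the restriction map under attaching a cell, and in particular verifying that the relevant restriction maps are genuinely fibrations and that the connectivity/basepoint issues (the section space may be disconnected, so one must run the argument at all basepoints and track the $\pi_1$-action on $\pi_0$ of the fibre in case (i) and on $\pi_1$ of the fibre in case (ii)) do not create problems. A secondary subtlety is that in case (ii) and (iii) the hypothesis on $\pi_k(F)$ is only assumed up to degree $r+1$ (resp. $m+r$), so when attaching the top-dimensional $r$-cells one has exactly enough room — the induction must be organised so that attaching a $j$-cell only ever calls on $\pi_k(F)$ for $k \le j+1 \le r+1$ in case (ii) and $k \le m+j \le m+r$ in case (iii), and one should double-check the edge case $\pi_2$ of the fibre at $j=r$ is genuinely not needed, which is true because the only map out of $\pi_2(\text{fibre})$ we use lands in $\pi_2(\text{total})$ and is not required for controlling $\pi_1(\text{total})$.
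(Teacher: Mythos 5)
Your proposal is correct and follows essentially the same route as the paper: induct on the number of relative cells, use the restriction fibration $\mr{Sect}_A(p) \to \mr{Sect}_A(p|_{B'})$ whose fibre has the homotopy type of $\Omega^j F$, and conclude via the long exact sequence together with the closure properties of finite, polycyclic-by-finite, and finitely generated abelian groups under subgroups, quotients, and extensions. The extra care you take about the degree bounds (verifying that $\pi_{r+2}(F)$ is never invoked in case (ii), and similarly in (iii)) is exactly why the paper's hypotheses are stated with the ranges $[0,r]$, $[1,r+1]$, and $[m,m+r]$, and your checks confirm those ranges are tight.
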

\begin{proof}
First note that if $B=B_1\sqcup B_2$ is a union of connected components, then $\mr{Sect}_A(p)\cong \mr{Sect}_{A\cap B_1}(p|_{B_1})\times \mr{Sect}_{A\cap B_2}(p| _{B_2})$, so we may assume that $B$ is connected. In this case we prove the three assertions simultaneously by induction over the number $n$ of relative cells of $(B,A)$. If $n=0$, then we have $B=A$ and $\mr{Sect}_A(p)=\{s_0\}$.  Assume that the conclusion holds for CW pairs with $(n-1)$ relative cells and suppose that $B = B' \cup D^d$ where $D^d$ denotes a $d$-cell for some $d\leq r$ and that $B'$ is obtained from $A$ by attaching $(n-1)$ cells of dimension $\leq r$. The fibres of the restriction map $\mr{Sect}_A(p) \to \mr{Sect}_A(p|_{B'})$ are either empty or equivalent to the $d$-fold loop space $\Omega^d F$ at some basepoint, so the result follows from the long exact sequence in homotopy groups together with the induction hypothesis and the fact that polycyclic-by-finite groups are by \cref{lem:group-classes-closure} closed under extensions, taking subgroups, and quotients.
\end{proof}

\begin{corollary}\label{cor:section-spaces-F-coconnected}
Let $(B,A)$ be a CW pair of (relative) finite type and $p \colon E \to B$ be a fibration with fibre $F$. Suppose that $\pi_0(F)$ is finite, at all basepoints $\pi_k(F)$ is polycyclic-by-finite for $k=1$ and finitely generated for $k\ge2$, and that $\pi_k(F)$ vanishes for $k>n$. Then at all basepoints $\pi_k(\mr{Sect}_A(p))$ is polycyclic-by-finite for $k=1$ and finitely generated for $k \geq 2$.
\end{corollary}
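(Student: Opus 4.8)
The plan is to use that $\pi_k(F)=0$ for $k>n$ in order to discard the relative cells of $(B,A)$ of dimension at least $n+2$, and then to reduce to \cref{lem:section-spaces-finite-CW}.

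For $m\ge-1$ let $B^{(m)}\subseteq B$ be the union of $A$ with all relative cells of $(B,A)$ of dimension $\le m$, so that $A=B^{(-1)}\subseteq B^{(0)}\subseteq\cdots$ and $B=\bigcup_mB^{(m)}$. Restricting sections along the cofibrations $B^{(m-1)}\hookrightarrow B^{(m)}$ gives a tower of fibrations $\{\mr{Sect}_A(p|_{B^{(m)}})\}_m$ whose inverse limit is $\mr{Sect}_A(p)$. Since $B^{(m)}$ arises from $B^{(m-1)}$ by a single pushout attaching all relative $m$-cells, the fibre of $\mr{Sect}_A(p|_{B^{(m)}})\to\mr{Sect}_A(p|_{B^{(m-1)}})$ over a section $s$ is the product, over the relative $m$-cells, of the spaces of extensions of $s$ over the individual cells, each of which, when non-empty, has the homotopy type of $\Omega^mF$. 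For $m\ge n+2$ every such extension exists, as the obstruction lies in $\pi_{m-1}(F)=0$, and $\Omega^mF$ is weakly contractible because $\pi_k(\Omega^mF)=\pi_{k+m}(F)=0$ for all $k\ge0$; hence for $m\ge n+2$ this restriction map is a weak equivalence. The tower $\{\mr{Sect}_A(p|_{B^{(m)}})\}_{m\ge n+1}$ thus has weak equivalences as structure maps, so the $\lim^1$-terms of its homotopy groups vanish and the Milnor sequence shows that the restriction map $\mr{Sect}_A(p)\to\mr{Sect}_A(p|_{B^{(n+1)}})$ is a weak equivalence.

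It therefore suffices to prove the statement with $B$ replaced by $B^{(n+1)}$. The pair $(B^{(n+1)},A)$ has relative dimension $\le n+1$, the space $B^{(n+1)}$ is connected because $B$ is, and this pair is \emph{finite} --- here one uses that $(B,A)$ has finitely many relative cells in each dimension, so that only finitely many cells survive the reduction. I would then apply \cref{lem:section-spaces-finite-CW} with $r=n+1$: hypothesis (ii) holds since $\pi_1(F)$ is polycyclic-by-finite and $\pi_k(F)$ is finitely generated for $2\le k\le n+2$, and hypothesis (iii) holds for every $m\ge2$ since $\pi_k(F)$ is finitely generated for $m\le k\le m+n+1$. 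This yields that $\pi_1(\mr{Sect}_A(p|_{B^{(n+1)}}))$ is polycyclic-by-finite and that $\pi_m(\mr{Sect}_A(p|_{B^{(n+1)}}))$ is finitely generated for $m\ge2$; transporting along the weak equivalence of the previous paragraph, uniformly over basepoints, gives the corollary.

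Given \cref{lem:section-spaces-finite-CW}, the substance lies entirely in the reduction, where two points need care. First, one must cut down to the relative $(n+1)$-skeleton and not to the $n$-skeleton: if $\pi_n(F)\ne0$, an $(n+1)$-cell can genuinely obstruct extending a given section over the $n$-skeleton, so $\mr{Sect}_A(p|_{B^{(n+1)}})\to\mr{Sect}_A(p|_{B^{(n)}})$ need not be surjective on $\pi_0$, and the argument would break at that level. Second, the inverse limit involves infinitely many cells, both within a fixed dimension and across dimensions; this is harmless, since weak contractibility is preserved under arbitrary products and a tower that is eventually a tower of weak equivalences has vanishing $\lim^1$. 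I expect the first point to be the main pitfall to avoid; the rest is bookkeeping with the long exact sequences, using the closure properties of polycyclic-by-finite groups from \cref{lem:group-classes-closure}.
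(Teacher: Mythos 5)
Your proof is correct, but it takes a genuinely different (and slightly longer) route than the paper's. The paper keeps the relative $n$-skeleton $\mr{sk}_n(B)$ rather than passing to the $(n+1)$-skeleton: it uses the single fibration sequence
\[\mr{Sect}_{A\cup\mr{sk}_n(B)}(p)\lra\mr{Sect}_A(p)\lra\mr{Sect}_A(p|_{\mr{sk}_n(B)})\]
and observes that over a point in the image of the restriction map (which is where all the basepoints of interest live, since they come from genuine sections over $B$), the fibre is nonempty and hence, by the obstruction-theoretic argument you give for the $(n+1)$- and higher cells, weakly contractible. This yields an isomorphism $\pi_k(\mr{Sect}_A(p),s)\cong\pi_k(\mr{Sect}_A(p|_{\mr{sk}_n(B)}),s|_{\mr{sk}_n(B)})$ at every $s\in\mr{Sect}_A(p)$, from which the result follows by \cref{lem:section-spaces-finite-CW} applied to $(\mr{sk}_n(B),A)$ with $r=n$. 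Your pitfall-avoidance claim, that ``one must cut down to the $(n+1)$-skeleton,'' is therefore too strong: the potential failure of surjectivity on $\pi_0$ that you correctly identify for the map $\mr{Sect}_A(p|_{B^{(n+1)}})\to\mr{Sect}_A(p|_{B^{(n)}})$ is irrelevant to the paper's argument, which never claims that restriction to the $n$-skeleton is a weak equivalence and only needs the long exact sequence at the relevant basepoints. Your tower-plus-Milnor argument buys you a genuine weak equivalence $\mr{Sect}_A(p)\simeq\mr{Sect}_A(p|_{B^{(n+1)}})$, which is a stronger intermediate conclusion, at the cost of an extra skeleton and the $\lim^1$ bookkeeping. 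You are also right to flag the implicit finiteness assumption: as stated, the corollary puts no finiteness hypothesis on $(B,A)$, yet both your proof and the paper's appeal to \cref{lem:section-spaces-finite-CW} and hence need $\mr{sk}_n(B)$ (resp.\ $B^{(n+1)}$) to be a finite relative CW complex; the paper asserts this without comment, so the statement should really say that $(B,A)$ has finitely many cells in each dimension $\le n$ (resp.\ $\le n+1$), or one should restrict to the relevant applications where this holds.
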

\begin{proof}
Let $A \subset \mr{sk}_n(B) \subset B$ denote the relative $n$-skeleton. We observe that there is a fibration sequence $\mr{Sect}_{A \cup \mr{sk}_n(B)}(p) \to \mr{Sect}_A(p) \to \mr{Sect}_A(p|_{\mr{sk}_n(B)})$
whose fibre is weakly contractible by obstruction theory and the hypothesis $F\simeq \tau_{\leq n}F$. As $(\mr{sk}_n(B),A)$ is a finite CW pair since $(B,A)$ is of relative finite type, the result follows from \cref{lem:section-spaces-finite-CW}.
\end{proof}

\section{Finiteness properties of mapping class groups}\label{sec:finiteness-mcg-emb}
This section serves to establish finiteness results for variants of mapping class groups.

\subsection{Finiteness properties of homotopy mapping class groups}\label{sec:mcg-section}Building on work of Sullivan \cite[Theorem 10.3]{Sullivan}, Triantafillou \cite[Theorem 1]{TriantafillouhAut} proved that the homotopy mapping class group $\pi_0(\hAut(X))$ of a connected finite CW-complex $X$ with finite fundamental group is commensurable up to finite kernel with an arithmetic group. Here $\hAut(X) \subset \Map(X,X)$ is the group-like topological monoid of homotopy self-equivalences of $X$ equipped with the compact-open topology. The proof of \cref{thm:fg pi} relies on two extensions of her result.

\subsubsection{Stabilisers of (co)homology classes} The first extension is minor. It concerns concerning stabilisers of sets of twisted (co)homology classes. To state it, let $X$ be a connected based space and write $\hAut_*^{\pi_1}(X)\subset \hAut(X)$ for the group-like submonoid of pointed homotopy automorphisms that induce the identity on $\pi_1(X)$. This monoid naturally acts on the (co)homology groups $H_k(X;V)$ and $H^k(X;V)$ with coefficients in any $\bQ[\pi_1(X)]$-module $V$. For sets of (co)homology classes $h_*\subset H_*(X;V)$ and $h^*\subset H^*(X;V)$, we write 
\[\hAut^{\pi_1}_*(X)_{h}\subset \hAut^{\pi_1}_*(X)\]
for the pointwise simultaneous stabiliser of $h_*$ and $h^*$.

\begin{proposition}\label{prop:triantafillou-haut}Let $X$ be a connected based space, $V$ be a $\bQ[\pi_1(X)]$-module, and $h_*\subset H_*(X;V)$ and $h^*\subset H^*(X;V)$  be subsets of (co)homology classes. If
\begin{enumerate}
\item $X$ is homotopy equivalent to a CW-complex that has either finitely many cells or finitely generated homotopy groups that vanish aside from finitely many degrees,
\item the group $\pi_1(X)$ and the sets $h_*$ and $h^*$ are finite, and
\item $V$ is finite-dimensional as a $\bQ$-vector space,
\end{enumerate}
then the group $\pi_0(\hAut^{\pi_1}_*(X)_h)$ is commensurable up to finite kernel with an arithmetic group.
\end{proposition}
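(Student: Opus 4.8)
The plan is to follow Triantafillou's strategy for proving \cref{thm:triantafillou-general} (i), which itself builds on Sullivan's arithmeticity theorem for $\pi_0(\hAut(X))$ of a finite simply connected complex, and to incorporate the extra data of the finite set of (co)homology classes and the $\pi_1$-action as additional linear constraints. The key point throughout is that commensurability-up-to-finite-kernel with an arithmetic group is preserved under the operations we need, so it suffices to exhibit $\pi_0(\hAut^{\pi_1}_*(X)_h)$ as such a group after passing to finite-index subgroups and quotients by finite normal subgroups.

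\textbf{Step 1: reduce to a minimal model / Postnikov tower over $B\pi$.} Write $\pi = \pi_1(X)$, a finite group. Since $\hAut^{\pi_1}_*(X)$ consists of pointed self-equivalences inducing the identity on $\pi$, it is the homotopy automorphism monoid of $X$ viewed as a space \emph{over and under} $B\pi$ (via the canonical map $X \to B\pi$ classifying the universal cover and a section realizing the basepoint). Equivalently, working $\pi$-equivariantly on the universal cover $\widetilde X$, we are looking at $\pi$-equivariant pointed homotopy automorphisms of $\widetilde X$. By hypothesis (i), $\widetilde X$ is (rationally) a nilpotent space with finitely generated homotopy groups concentrated in finitely many degrees, or finite; in either case it has a finite Postnikov tower with finitely generated homotopy groups, and one can build an equivariant rational minimal model in the sense of Triantafillou. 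The mapping class group $\pi_0(\hAut^{\pi_1}_*(X))$ then sits in a tower of extensions with abelian kernels $H^{k}(\widetilde X; \pi_k \widetilde X \otimes \bQ)^\pi$-flavoured terms, and the ``arithmetic'' structure comes from the action on $\bigoplus_k \pi_k(\widetilde X)\otimes\bQ$ as $\pi$-representations preserving the rational $k$-invariants --- this is exactly Triantafillou's/Sullivan's input giving $\pi_0(\hAut^{\pi_1}_*(X))$ commensurable up to finite kernel with an arithmetic group. (One cites \cref{thm:triantafillou-general} (i) essentially verbatim for the case $h = \varnothing$.)

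\textbf{Step 2: cut down by the stabilizer condition.} Now impose the stabilizer of $h_* \subset H_*(X;V)$ and $h^* \subset H^*(X;V)$. Since $\hAut^{\pi_1}_*(X)$ acts on $H_*(X;V)$ and $H^*(X;V)$ through its action on $X$ and these are finite-dimensional $\bQ$-vector spaces (using (iii) and the fact that $X$ has finite rational homological dimension by (i)), and the action is through a map $\pi_0(\hAut^{\pi_1}_*(X)) \to \mathrm{GL}(H_*(X;V)) \times \mathrm{GL}(H^*(X;V))$ which is algebraic with respect to the $\bQ$-algebraic group structure underlying the arithmeticity in Step 1. The subgroup $\pi_0(\hAut^{\pi_1}_*(X)_h)$ is the preimage of the pointwise stabilizer of the finite sets $h_*, h^*$. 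Pointwise stabilizers of finitely many vectors are Zariski-closed subgroups defined over $\bQ$, so the stabilizer is again (the $\bZ$-points, up to commensurability, of) a $\bQ$-algebraic group: the intersection of an arithmetic group with a $\bQ$-algebraic subgroup is arithmetic. Combined with \cref{lem:f-infty-commensurable} (commensurability up to finite kernel is an equivalence relation under which being commensurable with an arithmetic group is stable), this yields the claim. More carefully, one uses that if $\Gamma$ is commensurable up to finite kernel with an arithmetic group $\mathbf{G}_\bZ$ via a zig-zag, and $\Gamma$ acts on $\bigoplus H_*(X;V) \oplus H^*(X;V)$ compatibly with an algebraic action of $\mathbf{G}$, then the stabilizer $\Gamma_h$ is commensurable up to finite kernel with $(\mathbf{G}_h)_\bZ$ where $\mathbf{G}_h \subset \mathbf{G}$ is the (defined over $\bQ$, since $h_*,h^*$ are finite and can be taken rational) pointwise stabilizer.

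\textbf{Main obstacle.} The delicate part is Step 2's compatibility claim: one must check that the action of $\pi_0(\hAut^{\pi_1}_*(X))$ on twisted (co)homology is genuinely \emph{algebraic} with respect to the specific $\bQ$-algebraic group that realizes the arithmeticity in Triantafillou's theorem --- i.e. that the representation on $H_*(X;V)$ factors through the algebraic hull in a morphism of algebraic groups, not merely abstractly. This requires unwinding Triantafillou's construction: the arithmetic group is an arithmetic subgroup of the group of automorphisms of an equivariant minimal model fixing the $k$-invariants, and one needs that twisted (co)homology with coefficients in a $\bQ[\pi]$-module $V$ is computed by a functor of that model (e.g. via the equivariant de Rham / minimal model computing $H^*(X;V) = H^*(\widetilde X;\bQ)\otimes_{\bQ[\pi]} V$-type formula, using that $V$ is finite-dimensional so the coefficients are ``algebraic data'' of bounded size), making the action algebraic. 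A secondary point is handling the two cases in hypothesis (i) uniformly --- the ``finitely many cells'' case versus the ``finitely generated homotopy, finitely many nonzero degrees'' case --- but both reduce to the same minimal-model input since a finite complex with finite $\pi_1$ has finitely generated homotopy groups in each degree (though infinitely many nonzero ones), so one truncates: the action on $H_*(X;V), H^*(X;V)$ and on $\pi_0(\hAut^{\pi_1}_*(X))$ only sees a finite Postnikov stage by obstruction theory once one fixes the (bounded) degrees in which $h_*, h^*$ and $V$-twisted cohomology live, reducing the finite-complex case to the finite-Postnikov-tower case.
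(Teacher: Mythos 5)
Your proposal follows essentially the same route as the paper's proof: reduce to the unstabilized case $h=\varnothing$ via Triantafillou's zig-zag, realizing $\pi_0(\hAut^{\pi_1}_*(X))$ as commensurable up to finite kernel with an arithmetic subgroup $\Gamma$ of the $\bQ$-algebraic group $\aut^{\pi_1}(\cM)$ of the equivariant minimal model, and then cut down by the stabilizer of finitely many (co)homology classes using that the intersection of an arithmetic subgroup with a $\bQ$-algebraic subgroup is arithmetic. The ``main obstacle'' you flag is indeed the crux, and the paper dispatches it exactly as you sketch: the twisted cohomology $H^k(X;V)$ is computed as $H^k(\cM\otimes_{\bQ[\pi_1]}V)$ (note the tensor must be taken at the cdga/cochain level, not on cohomology groups as your displayed formula would suggest), and a straightforward extension of Triantafillou's Theorem 6(ii) shows that the resulting representation $\aut^{\pi_1}(\cM)\to\mr{GL}(H^k(\cM\otimes_{\bQ[\pi_1]}V))$ is a morphism of algebraic groups, so pointwise stabilizers are algebraic subgroups; homology classes are then handled by $\bQ$-duality $H_k(X;V)\cong H^k(X;V^\vee)^\vee$.
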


\begin{proof}We show this by adapting Triantafillou's argument in \cite{TriantafillouhAut}. First we assume $h_*,h^*\subset \{0\}$, in which case $\pi_0(\hAut^{\pi_1}_*(X)_h)=\pi_0(\hAut^{\pi_1}_*(X))$. Abbreviating $\pi_1\coloneqq \pi_1(X)$, we follow \cite{TriantafillouhAut} and consider the zig-zag
\begin{equation}\label{equ:zig-zag-Triantafillou}\aut^{\pi_1}(\cM)\cong \pi_0(\hAut^{B\pi_1}_*(X^{\mr{fib}}_\bQ))\lla\pi_0(\hAut_*^{B\pi_1}(X))\xlra{\cong}\pi_0(\hAut^{\pi_1}_*(X)).
\end{equation}
Here we use the following notation:
\begin{itemize}
\item $\hAut_*^{B\pi_1}(X)$ is the space of pointed homotopy automorphisms of $X$ that commute with a model of the $1$-truncation $X\ra B\pi_1(X)$ as a fibration. This maps to $\hAut^{\pi_1}_*(X)$ by the evident forgetful map. The latter is an equivalence since its fibres are equivalent to loop spaces (at various basepoints) of the mapping space $\Map_\ast(X,B\pi_1(X))$ which is homotopy discrete by obstruction theory.
\item $X_\bQ^{\mr{fib}}$ is the fibrewise rationalisation of $X\ra B\pi_1(X)$  (see p.\,3397 loc.cit.).
\item $\hAut_*^{B\pi_1}(X^{\mr{fib}}_\bQ)$ is defined analogously to $\hAut_*^{B\pi_1}(X)$. The left map is induced by fibrewise rationalisation.
\item $\cM$ is a $\pi_1(X)$-equivariant minimal cdga-model of the universal cover $\widetilde{X}$ together with a $\pi_1(X)$-equivariant weak equivalence $\rho\colon\cM\ra A_{PL}(\widetilde{X})$ to the cdga of PL-polynomial differential forms on $\widetilde{X}$ with its $\pi_1(X)$-action by functoriality  (see p.\,3393 loc.cit.)
\item $\aut^{\pi_1}(\cM)$ are the equivariant homotopy classes of equivariant cdga automorphisms of $\cM$. This group is isomorphic to $\pi_0(\hAut^{B\pi_1}_*(X^{\mr{fib}}_\bQ))$ (see p.\,3393 and 3397 loc.cit.). 
\end{itemize}
By Theorems 6 (i) and 11 loc.cit., $\aut^{\pi_1}(\cM)$ is a linear algebraic group over $\bQ$, the image 
\[\Gamma\coloneqq\mr{im}\big(\pi_0(\hAut_*^{B\pi_1}(X))\ra \aut^{\pi_1}(\cM)\big)\] 
is an arithmetic subgroup, and the map $\pi_0(\hAut_*^{B\pi_1}(X))\ra \pi_0(\hAut^{B\pi_1}_*(X^{\mr{fib}}_\bQ))$ has finite kernel, so we conclude that $\pi_0(\hAut_*^{B\pi_1}(X))\cong \pi_0(\hAut^{\pi_1}_*(X))$ is commensurable up to finite kernel to an arithmetic group which finishes the proof in the case $h_*,h^*\subset\{0\}$.

In the next step, we prove the case where $h^*$ is finite and $h_*\subset \{0\}$. Note that all groups in the sequence act compatibly on the cohomology groups
\[H^k(\cM\otimes_{\bQ[\pi_1]}V)\cong H^k(A_{PL}(\widetilde{X})\otimes_{\bQ[\pi_1]}V)\cong H^k(X;V),\]
so the subgroup $\pi_0(\hAut^{\pi_1}_*(X)_h)$ is commensurable up to finite kernel with the intersection of $\Gamma$ with the subgroup $\aut^{\pi_1}(\cM)_{h}\subset \aut^{\pi_1}(\cM)$ of those automorphism that fix $h^*$. By a straightforward extension of the proof of Theorem 6 (ii) loc.cit., the action map $\aut^{\pi_1}(\cM)\ra \mr{GL}(H^k(\cM\otimes_{\bQ[\pi_1]}A))$ is a map of algebraic groups, so the stabiliser of any cohomology class is an algebraic subgroup. As algebraic subgroups are closed under finite intersections, it follows that $\aut^{\pi_1}(\cM)_{h}\subset \aut^{\pi_1}(\cM)$ is an algebraic subgroup. Since the intersection of an arithmetic subgroup with an algebraic subgroup is arithmetic (see e.g.\,\cite[p.\,106]{Serre}), this implies that $\Gamma\cap \aut^{\pi_1}(\cM)_{h}$ is arithmetic and thus finishes the proof in the case $h_*\subset\{0\}$. 

The general case follows by using the canonical isomorphism $H_k(M;A)\cong H^k(M;A^\vee)^\vee$ where $(-)^\vee$ denotes taking $\bQ$-duals, combined with the fact that algebraic subgroups are closed under finite intersections.
\end{proof}

\subsubsection{Simple homotopy automorphisms} 
Our second extension of Triantafillou's work is more substantial and concerns the subgroup $\pi_0(\hAut^s(X))\le \pi_0(\hAut(X))$ of simple homotopy automorphisms of a finite CW-complex $X$. The following gives the part of \cref{thm:mcg} \ref{mainthm-mcg-ii} about \emph{simple} homotopy automorphisms; the part about \emph{possibly non-simple} ones is \cite[Theorem 1]{TriantafillouhAut}.

\begin{theorem}\label{thm:simple-htpy-mcg}
For a finite CW-complex $X$ with finite fundamental group at all basepoints, the group $\pi_0(\hAut^s(X))$ is commensurable up to finite kernel to an arithmetic group. Moreover, assuming in addition that $X$ is connected and based, the statement of \cref{prop:triantafillou-haut} also holds for the subgroup $\pi_0(\hAut^{\pi_1,s}_*(X)_h)\le \pi_0(\hAut^{\pi_1}_*(X)_h)$ of simple homotopy automorphisms.
\end{theorem}

\begin{proof}
Recording the effect of a homotopy equivalence on components gives a map from $\pi_0(\hAut^s(X))$ to the symmetric group on the set $\pi_0(X)$, so its kernel $\sqcap_{[x]\in\pi_0(X)}\pi_0(\hAut^s(X_x))\le \pi_0(\hAut^s(X)$ has finite index as $\pi_0(X)$ is finite; here $X_x$ denotes the component of $x\in X$. The property of being commensurable up to finite kernel to an arithmetic group is closed under finite products, so to show the claim we may assume that $X$ is connected.

In the connected case, we fix a basepoint $\ast\in X$  and note that since the group $\pi \coloneqq \pi_1(X)$ is finite and thus has finitely many automorphisms, we may replace the group $\pi_0(\hAut^s(X))$ up to commensurability up to finite kernel by the group $\pi_0(\hAut^{s,B\pi}_\ast(X))$ of pointed simple self-equivalences over $B\pi$. The latter is the kernel of the homomorphism $\tau\colon \pi_0(\hAut^{B\pi}_\ast(X))\ra \Wh(\pi)=K_1(\bZ[\pi])/\langle \pm\pi \rangle$ that takes a self-equivalence to its Whitehead torsion; the fact that this is a homomorphism follows from the composition formula for Whitehead torsion \cite[Lemma 7.8]{Milnor}. To show that $\ker(\tau)$ is commensurable up to finite kernel to an arithmetic group, we establish a commutative diagram 
\begin{equation}
\label{equ:rational-wh-torsion-square}\begin{tikzcd}
 \pi_0(\hAut^{B\pi}_\ast(X))\arrow[d,"(-)^{\fib}_\bQ",swap] \arrow[rr,"\tau"]&& \Wh(\pi)\dar\\
 \pi_0(\hAut^{B\pi}_\ast(X_\bQ^\fib))\rar{\overline{\tau}}&K_1(\bQ[\pi])\arrow[twoheadrightarrow,r]&\Wh_\bQ(\pi).
\end{tikzcd}
\end{equation}
where $\Wh_\bQ(\pi)\coloneqq K_1(\bQ[\pi])/\langle \pm\pi \rangle$. Here the right-hand vertical map is induced by the  inclusion $\bZ\subset \bQ$, the two-headed map is the quotient map, the left vertical map $\smash{(-)^{\fib}_\bQ}$ is induced by fibrewise rationalisation over $B\pi$, and the map $\overline{\tau}$ will be explained later.

Both the bottom right map and the right-hand vertical map have finite kernel: the former since $\pi$ is finite and the latter because $K_1(\bZ[\pi])\ra K_1(\bQ[\pi])$ has finite kernel \cite[p.\,550]{Bass}. The group $\pi_0(\hAut^{s,B\pi}_\ast(X))$ is thus commensurable up to finite kernel to the kernel of the composition $ \pi_0(\hAut^{B\pi}_\ast(X))\ra K_1(\bQ[\pi])$, so we may show the claim for this kernel instead. Recall from the proof of \cref{prop:triantafillou-haut} that $\pi_0(\hAut^{B\pi}_\ast(X_\bQ^\fib))$ is a linear $\bQ$-algebraic group which contains the image of $\smash{(-)^{\fib}_\bQ}$ as an arithmetic subgroup. We will show below that the kernel of $\overline{\tau}$ is a linear $\bQ$-algebraic subgroup of $\pi_0(\hAut^{B\pi}_\ast(X_\bQ^\fib))$. The intersection of an arithmetic subgroup with a linear algebraic subgroup is arithmetic (see e.g.\,\cite[p.\,106]{Serre}), so this will imply that the image of the kernel of $\pi_0(\hAut^{B\pi}_\ast(X))\ra K_1(\bQ[\pi])$ in $\pi_0(\hAut^{B\pi}_\ast(X_\bQ^\fib))$ is an arithmetic group, which will in turn give the claim on $\pi_0(\hAut^{s,B\pi}_\ast(X))$ since the map $\smash{(-)^{\fib}_\bQ}$ has finite kernel (see the proof of \cref{prop:triantafillou-haut}). We are thus left with defining $\overline{\tau}$, showing that it makes \eqref{equ:rational-wh-torsion-square} commute, and proving that the kernel of $\overline{\tau}$ is a linear $\bQ$-algebraic subgroup.

Writing $d \coloneqq \dim(X)$, the map $\overline{\tau}$ is defined as a composition
\begin{equation}\label{equ:tau-rational}\textstyle{\pi_0(\hAut^{B\pi}_\ast(X_\bQ^\fib))\lra \bigsqcap_{i=0}^{d}\Aut_{\bQ[\pi]}(H_i(\widetilde{X};\bQ))}\xrightarrow{\bigsqcap_i[-]} K_1(\bQ[\pi])^{d+1}\xlra{\chi}K_1(\bQ[\pi]).\end{equation}
Here the first map is given by the action on the homology groups $\smash{H_\ast(\widetilde{X}^\fib_\bQ;\bQ)\cong H_\ast(\widetilde{X};\bQ)}$ of the universal cover. The second map is on each factor an instance of the canonical map $[-]\colon \Aut_R(P)\ra K_1(R)$ for a ring $R$ and a finitely generated projective $R$-module $P$ (see e.g.\,\cite[p\,26]{MilnorKtheory}; note that any $\bQ[\pi]$-module is projective as $\bQ[\pi]$ is semisimple), and $\chi$ sends $(x_0,\ldots, x_d)$ to $\sum_{i=0}^d(-1)^i x_i$. The fact that $\overline{\tau}$ makes the diagram \eqref{equ:rational-wh-torsion-square} commute is the content of \cref{lem:rational-wh-torsion} below, so we are left with showing that $\ker(\overline{\tau})\le \pi_0(\hAut^{B\pi}_\ast(X_\bQ^\fib))$ is a $\bQ$-algebraic subgroup. To do so, we postcompose $\overline{\tau}$ with the \emph{reduced norm} map to the units of the centre
\begin{equation}\label{equ:Nrd-k-grp}
	\Nrd_{\bQ[\pi]}\colon K_1(\bQ[\pi])\longhookrightarrow Z(\bQ[\pi])^\times,
\end{equation}
which is a monomorphism \cite[p.\,594]{WallNorms}. It is induced by morphisms of the form
\begin{equation}\label{equ:Nrd-groupring}
	\Nrd_{\bQ[\pi]}\colon \GL_n(\bQ[\pi])\lra Z(\bQ[\pi])^\times
\end{equation}
for $n\ge0$ that are compatible with stabilisation. Being the units in a finite-dimensional $\bQ$-algebra,  $Z(\bQ[\pi])^\times$ is a linear $\bQ$-algebraic group, so it suffices to prove that the composition of \eqref{equ:tau-rational} with \eqref{equ:Nrd-k-grp} is $\bQ$-algebraic, i.e.\,a morphism of algebraic groups defined over $\bQ$. The first map in the composition \eqref{equ:tau-rational} is $\bQ$-algebraic (see the proof of \cref{prop:triantafillou-haut}). As $\chi\colon (Z(\bQ[\pi])^\times)^{d+1}\ra Z(\bQ[\pi])^\times$ is $\bQ$-algebraic as a composition of multiplications and taking inverses in the commutative $\bQ$-algebraic group $Z(\bQ[\pi])^\times$, it suffices to show that the maps \eqref{equ:Nrd-groupring} are $\bQ$-algebraic, since we may factor the second map in \eqref{equ:tau-rational} as a product, over the homological degree, of the precomposition of the $\bQ$-algebraic map $\smash{\Aut_{\bQ[\pi]}(H_i(\widetilde{X};\bQ))\ra \GL_{n_i}(\bQ[\pi])}$ induced by writing $H_i(\widetilde{X};\bQ)$ as a summand of a finitely generated free module of some rank $n_i$, with the reduced norm \eqref{equ:Nrd-groupring}.

We can thus finish the proof by recalling a construction of the maps \eqref{equ:Nrd-groupring} that makes it clear that they are $\bQ$-algebraic. In fact, more naturally these maps are defined over a finite extension of $\bQ$, but this suffices, by restricting scalars (cf.~\cite[p.\,106]{Serre}). By Maschke's theorem, the $\bQ$-algebra $\bQ[\pi]$ is semisimple and thus by the Artin--Wedddenburn theorem there is an isomorphism
\[\textstyle{\bQ[\pi]\cong \bigsqcap_{i=1}^rM_{m_i}(D_i)}\]
 to a product of matrix algebras over division algebras $D_i$. Writing $\bk_i\coloneqq Z(D_i)$ for the centre (a finite field extension of $\bQ$), the norm map \eqref{equ:Nrd-groupring} is defined as a product
\[\textstyle{\GL_n(\bQ[\pi])\cong \bigsqcap_{i=1}^r\GL_{n\cdot m_i}(D_i) \xrightarrow{\sqcap_{i=0}^r\Nrd_{D_i}} \bigsqcap_{i=1}^r\bk_i^\times \cong Z(\bQ[\pi])^\times}\]
of reduced norm maps 
\begin{equation}\label{equ:Nrd-division}\Nrd_{D_i}\colon \GL_{n\cdot m_i}(D_i)\ra \bk_i^\times.\end{equation}
The latter will be defined in such a way so that it is clear that they are algebraic over $\bk_i$, and this implies the same for \eqref{equ:Nrd-groupring} by choosing a common finite extension of the $\bk_i$'s. To define $\Nrd_{D_i}$, one chooses a splitting field $\bK_i$ for $D_i$, i.e.\,a field extension $\bk_i\subset \bK_i$ with an isomorphism $\phi_i\colon \bK_i\otimes_{\bk_i}D_i\cong M_{l_i}(\bK_i)$ as $\bk_i$-algebras for some $l_i\ge0$ (see e.g.\,\cite[Theorem 2.2.1]{GilleSzamuely}). One then considers for $m\ge0$ the composition
\[M_m(D_i)\xrightarrow{\phi_i(1\otimes(-))}M_{m\cdot l_i}(\bK_i)\xlra{\det}\bK_i\]
which is multiplicative and turns out to, firstly, have image in $\bk_i\le \bK_i$, secondly, be independent of the choice of $\bK_i$ and $\phi_i$, and, thirdly, be given as a homogenous polynomial over $\bk_i$ in a $\bk_i$-basis of $M_m(D_i)$ induced by a $\bk_i$-basis of $D_i$ (see e.g.\,\cite[Construction 2.6.1]{GilleSzamuely} and \cite[p.\,27]{PlatonovRapinchuk}). Choosing $m=n\cdot m_i$ and taking units, this gives \eqref{equ:Nrd-division}, which is an algebraic map over $\bk_i$ as a result of the third property just discussed. This completes the proof of the first part.

Regarding the addendum, recall from the proof of \cref{prop:triantafillou-haut} that the group $\pi_0(\hAut^{\pi}_*(X)_h)$ is commensurable up to finite kernel to the arithmetic group given as the image of $\pi_0(\hAut^{B\pi}_*(X)_h)$ in the $\bQ$-algebraic group $\smash{\pi_0(\hAut^{B\pi}_*(X^{\fib}_\bQ)_h)}$, so, by the first part of this proof, the intersection of this group with the $\bQ$-algebraic subgroup $\ker(\overline{\tau})\le \smash{\pi_0(\hAut^{B\pi}_*(X^{\fib}_\bQ))}$ is commensurable up to finite kernel to $\pi_0(\hAut^{\pi,s}_*(X)_h)$. As intersections of arithmetic subgroups with $\bQ$-algebraic subgroups are arithmetic (see e.g.\,\cite[p.\,106]{Serre}), this gives the claim.
\end{proof}

\begin{lemma}\label{lem:rational-wh-torsion}The diagram \eqref{equ:rational-wh-torsion-square} is commutative.
\end{lemma}

\begin{proof}We begin with a general discussion on torsion invariants over an associative unital ring $R$ which we assume to have the invariant basis number property, following Milnor \cite[p.\,362]{Milnor}. We require all modules be left modules and all chain complexes to be graded by the integers, bounded, and degreewise finitely generated. Given a quasi-isomorphism $\varphi\colon C_\ast \ra D_\ast$ of based chain complexes over $R$ (i.e.\,$C_i$ and $D_i$ are free with specified basis), its mapping cone is an acyclic based chain complex, so it has a \emph{torsion} invariant \[\tau_R(\varphi) \in \widetilde{K}_1(R)=K_1(R)/\langle \pm 1 \rangle\] in the reduced first $K$-group of $R$ (see e.g.~Section 3 loc.cit.). By an observation of Gersten \cite{Gersten}, if source and target agree (i.e.\,$C_\ast= D_\ast$) then $\tau_R(\varphi)$ does, firstly, not dependent on the chosen bases, and can, secondly, be extended to self-equivalences of chain complexes that are only degreewise projective, not necessarily free. (In fact $\tau_R(-)$ can even be extended to have values in the non-reduced group $K_1(R)$ but we will not need this and consider $\tau_R(-)$ as valued in $\widetilde{K}_1(R)$.)
Viewed as an invariant of quasi-automorphisms $\varphi\colon C_\ast\ra C_\ast$ of degreewise projective chain complexes, the torsion $\tau_R(-)$ enjoys the following properties:
\begin{enumerate}
\item\label{prop-torsion-i} We have $\tau_R(\varphi)=0$ whenever $C_\ast$ is acyclic. This follows from \cite[Proposition 1]{Gersten}.
\item\label{prop-torsion-ii}  For a map of short exact sequences
\[
\begin{tikzcd}
0\rar&C_\ast\arrow[d,"\varphi_C"',"\simeq"]\rar{\iota}&D_\ast\arrow[d,"\varphi_D"',"\simeq"]\rar{\pi}&E_\ast\arrow[d,"\varphi_E"',"\simeq"]\rar&0\\
0\rar&C_\ast\rar{\iota}&D_\ast\rar{\pi}&E_\ast\rar&0
\end{tikzcd}
\]
we have $\tau_R(\varphi_D)=\tau_R(\varphi_C)+\tau_R(\varphi_E)$. This follows from an application of \cite[Theorem 3.1]{Milnor}, after making the complexes degreewise free by adding complements, choosing bases, and taking mapping cones.
\item\label{prop-torsion-iii}  We have $\varphi_R(s^n\varphi)=(-1)^n\varphi_R(\varphi)$ for $n\in \bZ$ where $s^n(-)$ shifts chain complexes up by $n$ degrees. This is clear from the definition \cite[p\,412]{Gersten}, but also follows from \ref{prop-torsion-i} and \ref{prop-torsion-ii}.
\item\label{prop-torsion-iv}  Specialising $\tau_R(-)$ to automorphisms $\varphi\colon P\ra P$ of finitely generated projective modules viewed as complexes concentrated in degree $0$, we have $\smash{\tau_R(\varphi)=[\varphi]\in \widetilde{K}_1(R)}$ where $[-]\colon \Aut_R(P)\ra K_1(R)$ is the canonical map which featured in the previous proof. 
\end{enumerate}
If the ring $R$ is semisimple (i.e.~if all its modules are projective), then the invariant $\varphi_R(-)$ simplifies significantly: for any quasi-isomorphism $\varphi\colon C_\ast\ra C_\ast$ one has \[\textstyle{\tau_R(\varphi)=\sum_{i\in\bZ}(-1)^i\big[H_i(\varphi)\colon H_i(C_\ast)\xrightarrow{\cong}H_i(C_\ast)\big]}\]
where $H_i(C_\ast)$ is considered as concentrated in degree $0$. This follows from an induction over the number of nontrivial homology groups using \ref{prop-torsion-i}--\ref{prop-torsion-iv}: Property \ref{prop-torsion-i} gives the initial case. For the induction step, we may by \ref{prop-torsion-iii} assume that $H_0(C_*)$ is the nontrivial homology group of lowest degree. By considering the subcomplex $C'_* \le C_*$ that agrees with $C_*$ for $*>0$, with $\mr{im}(d\colon C_1\ra C_0)$ for $*=0$, and is $0$ otherwise, we get a short exact sequence $0 \to C'_* \to C_* \to D_\ast \to 0$ with $D_\ast\coloneqq C_*/C'_*$, and a self-map of this exact sequence consisting of quasi-isomorphisms $\varphi_{C'}$, $\varphi_C=\varphi$, and $\varphi_D$, so $\tau_R(\varphi_C) = \tau_R(\varphi_{C'}) + \tau_R(\varphi_{D})$ using \ref{prop-torsion-ii}. As $C'_*$ has one less nontrivial homology group, the induction hypothesis gives $\tau_R(\varphi_{C'}) = \sum_{i > 0} (-1)^i [H_i(\varphi)]$, so it remains to show $\tau_R(\varphi_D) = [H_0(\varphi)]$. To do so, we consider the subcomplex $D'_* \subseteq D_*$ which is $\ker(d\colon D_0\ra D_{-1}) = H_0(C_*)$ in degree $*=0$ and $0$  otherwise otherwise. Once more we get an exact sequence $0 \to D'_* \to D_* \to D_*/D'_* \to 0$ and a quasi-isomorphism of short exact sequences induced by $\varphi_D$, so $\tau_R(\varphi_D) = \tau_R(\varphi_{D'}) + \tau_R(\varphi_{D_*/D'_*})$ using \ref{prop-torsion-ii}. Moreover, we have $\tau_R(\varphi_{D'})=[H_0(\varphi)]$ using \ref{prop-torsion-iv} and $\tau_R(D_*/D'_*)=0$ using \ref{prop-torsion-iv}, so the induction is concluded.

We now turn to the proof of the statement. The top map in \eqref{equ:rational-wh-torsion-square} is the composition
\[
\pi_0(\hAut^{B\pi}_\ast(X))\xleftarrow{\cong}\pi_0(\hAut^{B\pi,\mr{cell}}_\ast(X))\ra\pi_0(\hAut_{\bZ[\pi]}(C_\ast^{\mr{cell}}(\widetilde{X})))\xrightarrow{\tau_{\bZ[\pi]}}\widetilde{K}_1(\bZ[\pi])\twoheadrightarrow\Wh(\pi)
\]
where $\pi_0(\hAut^{B\pi,\mr{cell}}_\ast(X))$ is the group of cellular homotopy classes of cellular pointed homotopy equivalences over $B\pi$ and $\pi_0(\hAut_{\bZ[\pi]}(C_\ast^{\mr{cell}}(\widetilde{X})))$ is the group of chain homotopy classes of quasi-isomorphisms of the cellular chain complex over $\bQ[\pi]$ of the universal cover. (Note on passing that dividing out the subgroup $\langle \pm\pi\rangle\le K_1(\bZ[\pi])$ is not necessary to define the Whitehead torsion of a \emph{self}-equivalence, by the above discussion.) Next, we consider the diagram
\begin{equation}
\begin{tikzcd}
\pi_0(\hAut_{\bZ[\pi]}(C_\ast^{\mr{cell}}(\widetilde{X})))\rar{\tau_{\bZ[\pi]}}\dar{(-)\otimes \bQ}&\widetilde{K}_1(\bZ[\pi])\dar{(-)_\bQ}\rar& \Wh(\pi)\dar{(-)_\bQ}\\
\pi_0(\hAut_{\bQ[\pi]}(C_\ast^{\mr{cell}}(\widetilde{X})\otimes\bQ))\rar{\tau_{\bQ[\pi]}}\rar&\widetilde{K}_1(\bQ[\pi])\rar& \Wh(\pi)_\bQ
\end{tikzcd}
\end{equation}
which commutes since the $R$-torsion $\tau_R(-)$ is natural under base change (see the defining formula in loc.cit.). As $\bQ[\pi]$ is semisimple, the discussion above implies that the composition $\pi_0(\hAut_{\bZ[\pi]}(C_\ast^{\mr{cell}}(\widetilde{X})))\ra \Wh(\pi)_\bQ$ agrees with the composition
\[\textstyle{\pi_0(\hAut_{\bZ[\pi]}(C_\ast^{\mr{cell}}(\widetilde{X})))\lra \bigsqcap_{i=0}^{d}\Aut_{\bQ[\pi]}(H_i(\widetilde{X};\bQ))\xrightarrow{\sum_{i=0}^d(-1)^i[-]}\widetilde{K}_1(\bQ[\pi])\longtwoheadrightarrow \Wh(\pi)_\bQ}.\]
In conclusion, this shows that the clockwise composition of \eqref{equ:rational-wh-torsion-square} agrees with the composition
\[\textstyle{\pi_0(\hAut^{B\pi}_\ast(X))\lra \bigsqcap_{i=0}^{d}\Aut_{\bQ[\pi]}(H_i(\widetilde{X};\bQ))\xrightarrow{\sum_{i=0}^d(-1)^i[-]} \Wh(\pi)_\bQ}.\]
Factoring the first map as the composition
\[\textstyle{\pi_0(\hAut^{B\pi}_\ast(X))\xrightarrow{(-)_{\bQ}^\fib}\pi_0(\hAut^{B\pi}_\ast(X_\bQ^{\fib}))\lra \bigsqcap_{i=0}^{d}\Aut_{\bQ[\pi]}(H_i(\widetilde{X};\bQ))}\]
this is exactly the counterclockwise composition of \eqref{equ:rational-wh-torsion-square}, so the claim follows.
\end{proof}

\begin{remark}There is an alternative description of the map $\overline{\tau}$ in \eqref{equ:rational-wh-torsion-square}: view $K_1(\bQ[\pi])$ as the fundamental group of the algebraic $K$-theory (infinite loop) space of perfect chain complexes of $\bQ[\pi]$-modules and quasi-isomorphisms (see e.g.\,\cite[II.9.7.5, V.2.7.2]{Weibel}). Then given a self-equivalence representing a class in $\pi_0(\hAut_*^{B\pi}(\smash{X^\fib_\bQ}))$, lift it to the universal cover and take singular chains to obtain a self-equivalence of the perfect $\bQ[\pi]$-chain complex $C_\ast( \smash{\widetilde{X}^\fib_\bQ};\bQ)$ of rational singular chains. This gives a loop in the algebraic $K$-theory space, so an element in $K_1(\bQ[\pi])$.  This construction satisfies the analogues of \ref{prop-torsion-i}--\ref{prop-torsion-iv}, which one can use to show that it indeed agrees with the map $\overline{\tau}$.
\end{remark}

\begin{remark}\label{rem:short-cut}There is an alternative proof of \cref{thm:simple-htpy-mcg} in the case where $X$ is an orientable Poincaré complex of even formal dimension $d$, by showing that under this additional assumption the inclusion $\pi_0(\hAut^s(X))\le \pi_0(\hAut(X))$ has in fact finite index. To prove this, it suffices to show that $\pi_0(\hAut_*^{s,\pi_1}(X)) \subset \pi_0(\hAut_*^{\pi_1}(X))$ has finite index for which is it enough (as $\Wh(\pi_1(X))$ is finitely generated) to show that $\tau(f)\in  \Wh(\pi_1(X))$ is a torsion element for all $[f]\in\pi_0(\hAut_*^{\pi_1}(M))$. This is proved in  \cite[Proposition 7.2]{WallNorms}, by the following argument: the duality formula for Whitehead torsion gives $\tau(f)=(-1)^{d+1}\overline{\tau(f)}$ where $\overline{(-)}$ is the involution induced by transposing matrices and sending $g\in \pi_1(M)$ to $w(g)\cdot g^{-1}$ where $w\colon \pi_1(X)\ra \{\pm1\}$ is the orientation character. If $d$ is even, we thus have $2\cdot \tau(f)=\tau(f)-\overline{\tau(f)}$ which is a torsion-element as long as $w$ is trivial (which is equivalent to $X$ being orientable) by \cite[p.\,611]{WallNorms}. So $\tau(f)$ is a torsion element itself and the claim follows.
\end{remark}

\subsection{Finiteness properties of mapping class groups}\label{sec:finiteness-mcg}Building on his result for $\pi_0(\hAut(X))$, Sullivan \cite[Theorem 13.3]{Sullivan} also proved that the mapping class group $\pi_0(\Diff(M))$ of a smooth closed 1-connected manifold of dimension $d\ge6$ is commensurable up to finite kernel to an arithmetic group, so in particular of finite type. As mentioned in the introduction, in \cite[Corollary 5.3]{Triantafillou}, Triantafillou claims that $\pi_0(\Diff(M))$ is also of finite type for whenever $M$ is orientable, of dimension $d\ge5$, and has finite fundamental group. There appear to be two issues with the proposed proof of this in \cite{Triantafillou,TriantafillouhAut}, the first more critical than the second:

\begin{enumerate}[label=(\alph*)]
\item\label{issue-1} The first issue is that the proof crucially relies on the claim \cite[Proposition 15]{TriantafillouhAut} that for all $M$ as above, the image of $\pi_0(\Diff(M))$ in $\pi_0(\hAut(M))$ is finite index in the stabiliser $\pi_0(\hAut(M))_{[T^sM]}\le  \pi_0(\hAut(M))$ of the stable tangent classifier $[T^sM]\in[M,BO]$. This is incorrect, see \cref{exam:counterexample} below for counterexamples in all odd dimensions $\geq 5$. The issue with the proof is that Triantafillou implicitly assumes that the stabiliser of the identity under the action of $\pi_0(\hAut(M))_{[T^sM]}$ on the non-simple version of the structure set $\mathcal{S}^h(M)$ is the subgroup of those equivalences that are homotopic to diffeomorphisms. However, this stabiliser agrees instead with the (often larger) subgroup of those tangential self-equivalences of $M$ that are homotopic to a self-equivalence induced by an inertial $h$-cobordism on $M$. 

\item \label{issue-2}The second issue concerns the claim in dimension 5. The proof of \cite[Corollary 5.3]{Triantafillou} relies on work of Hatcher--Wagoner \cite{HatcherWagoner} and Igusa \cite{Igusa} which assumes the dimension to be at least $6$. Hatcher \cite[p.\,7]{Hatcher} states that the necessary statement was extended to $d=5$ by Igusa, but to our knowledge no proof has appeared so far.
\end{enumerate}

Using \cref{thm:simple-htpy-mcg}, we are able to circumvent the first issue, basically by replacing the role of the action of $\pi_0(\hAut(M))_{[T^sM]}$ on $\mathcal{S}^h(M)$ in Triantafillou's argument by the action of $\pi_0(\hAut^s(M))_{[T^sM]}$ on $\mathcal{S}^s_(M)$. As a courtesy to the reader, we spell out the argument in full detail, which also gives us the chance to extend the result to homeomorphisms and to nonorientable manifolds. The following in particular proves \cref{thm:mcg} \ref{mainthm-mcg-i}.

\begin{theorem}\label{thm:mcg-finfty} Let $M$ be a closed smooth manifold of dimension $d\geq 6$ with finite fundamental group at all basepoints. Then the groups $\pi_0(\Diff(M))$ and $\pi_0(\Homeo(M))$ are of finite type and have polycyclic solvable subgroups in the sense of \cref{def:polyclic-solvable-subgroups}. \end{theorem}

\begin{proof}Directly from the definitions, we see that there is a commutative diagram
\begin{equation}\label{equ:diff-blockdiff}
\begin{tikzcd}
\pi_0(C^{\Diff}(M))\rar\dar& \pi_0(\Diff(M))\rar\dar& \pi_0(\widetilde{\Diff}(M))\dar\rar& 0\\[-4pt]
\pi_0(C^{\mr{Top}}(M)) \rar& \pi_0(\Homeo(M))\rar& \pi_0(\widetilde{\Homeo}(M))\rar& 0
\end{tikzcd}
\end{equation}
with exact rows. Here the leftmost groups are the path-components of the groups $C^{\Diff}(M)$ and $C^{\mr{Top}}(M)$ of concordance diffeomorphisms and homeomorphisms, the terms in the third column are the groups of pseudoisotopy classes of diffeomorphisms and homeomorphisms, and the vertical maps are the canonical forgetful maps (see e.g.~\cite{Hatcher} for definitions).

 We first argue that the leftmost vertical map is an isomorphism. For this it suffices to prove that the homotopy fibre $C^{\mr{Top}}(M)/C^{\Diff}(M)$ of the comparison map $BC^{\Diff}(M)\ra BC^{\mr{Top}}(M)$ is 1-connected. By smoothing theory (see \cite[p.\,453--455]{BurgheleaLashofSuspension}), this fibre agrees with a collection of path components of a space of sections $\mr{Sect}(E\ra M)$ of a fibration whose fibre is equivalent to the homotopy fibre of the stabilisation map
\[\hofib\big(\mr{Top}(d)/\mr{O}(d)\ra \mr{Top}(d+1)/\mr{O}(d+1)\big).\]
As this fibre is $(d+2)$-connected (see \cite[Essay V, \S 5, 5.0 (4)]{KirbySiebenmann}), it follows from obstruction theory that $\mr{Sect}(E\ra M)$ is $1$-connected, so the same holds for $C^{\mr{Top}}(M)/C^{\Diff}(M)$.

By surgery theory, the homotopy fibre of the map 
\begin{equation}\label{equ:block-comparison}
\BlockBDiff(M)\lra \BlockBHomeo(M)
\end{equation} 
is homotopy equivalent to a collection of components of the mapping space $\Map(M,\mr{Top}/\mr{O})$. As $\mr{Top}/\mr{O}$ has finite homotopy groups, it follows from obstruction theory that this mapping space has finitely many path components each of which has degreewise finite homotopy groups, so the same holds for the homotopy fibre of \eqref{equ:block-comparison}. In particular, the two rightmost groups in \eqref{equ:diff-blockdiff} are commensurable up to finite kernel. As $\pi_0(C^{\Diff}(M))\cong \pi_0(C^{\mr{Top}}(M))$ is abelian \cite[Lemma 1.1, p.\,18]{HatcherWagoner} and groups of finite type and with polycyclic solvable subgroups are closed under extensions and commensurability up to finite kernels (see Lemmas \ref{lem:f-infty} and \ref{lem:polycyclic-sovable-subgroups}), all we are left to do is to show that

\begin{enumerate}
\item \label{concordance-finite}$\pi_0(C^{\Diff}(M))$ is finitely generated and
\item \label{blockdiff-finite-type}$\pi_0(\smash{\widetilde{\Diff}}(M))$ is of finite type and has polycyclic solvable subgroups. 
\end{enumerate}
Writing $\pi_0(\hAut^{\cong}(M))\le \pi_0(\hAut^s(M))$ for the subgroup of those simple homotopy automorphisms that are homotopic to a diffeomorphism, Item \ref{blockdiff-finite-type} will follow from the exact sequence
\[\pi_1(\hAut^s(M)/\widetilde{\Diff}(M);\mr{id}) \lra \pi_0(\widetilde{\Diff}(M)) \lra \pi_0(\hAut^{\cong}(M))\lra 1\]
and another application of \cref{lem:f-infty} and \cref{lem:polycyclic-sovable-subgroups} together with the fact that polycyclic groups are closed under quotients (see \cref{lem:group-classes-closure}) once we show

\begin{enumerate}[resume]
\item \label{equ:polycyclic-quotient}$\pi_1(\hAut^s(M)/\widetilde{\Diff}(M);\mr{id})$ is polycyclic and
\item \label{equ:haut-image-arithmetic}$\pi_0(\hAut^{\cong}(M))$ is commensurable up to finite kernel with an arithmetic group.
\end{enumerate}

We now explain the proofs of \ref{concordance-finite} and \ref{equ:polycyclic-quotient}; claim \ref{equ:haut-image-arithmetic} will be proved separately as \cref{prop:diff-in-haut-arithmetic} below. For \ref{concordance-finite}, we use that since $d \geq 6$, there is an exact sequence of abelian groups \[\Wh^+_1(\pi_1(M);\bZ/2 \oplus \pi_2(M)) \lra \pi_0(C^{\Diff}(M)) \lra \Wh_2(\pi_1(M)) \lra 0\]
by \cite[p.\,10-11]{HatcherWagoner} and \cite[p.\,104-105]{Igusa}. By definition, the rightmost group is a quotient of $K_2(\bZ[\pi_1(M)])$ and the leftmost group a quotient of $(\bZ/2 \oplus \pi_2(M))[\pi_1(M)]$.  The latter is finitely generated by \cref{coro:pi1-finite-pik-fg} because $\pi_1(M)$ is finite and $\pi_2(M)$ is finitely generated using \cref{lem:fg-homology-implies-fg-homotopy-finite}. The former is finitely generated by \cite[Theorem 1.1.(i)]{Kuku}.

For the proof of \ref{equ:polycyclic-quotient}, we use the simple surgery exact sequence (see \cite[Theorem 10.8]{WallBook}) in the form of an exact sequence of groups
\[L^s_{d+2}(\bZ[\pi_1(M)],w) \lra \pi_1(\hAut^s(M)/\widetilde{\Diff}(M)) \lra [\Sigma M_+,\mr{G}/\mr{O}],\]
where $L^s_{\ast}(\bZ[\pi_1(M)],w)$ denotes the simple (quadratic) $L$-groups of $\pi_1(M)$ with respect to the orientation character $w\colon \pi_1(M)\ra \{\pm1\}$ (see the next paragraph for more details). We claim that the two outer terms in this sequence are finitely generated abelian, which would imply \ref{equ:polycyclic-quotient}. For $[\Sigma M_+,\mr{G}/\mr{O}]$, this holds as a consequence of \cref{lem:section-spaces-finite-CW} and the fact that the infinite loop space $\mr{G}/\mr{O}$ has finitely generated homotopy groups. For the groups $L^s_{\ast}(\bZ[\pi_1(M)],w)$, finite generation follows from \cite[Theorem 7.3]{WallHermitianV}, but this requires some unwrapping:

The notation Wall uses in \cite{WallHermitianV} is introduced in \cite{WallConference}: given an \emph{anti-structure} $R=(R,\alpha,u)$ (a unital ring $R$ together with an anti-automorphism $\alpha\colon R\ra R$ and a unit $u\in R^\times$ satisfying some conditions \cite[p.\,2]{WallConference}) and a subgroup $H\le K_1(R)$ that is invariant under the involution on $K_1(R)$ induced by taking transpose of matrices followed by applying $\alpha$ coefficient-wise (denoted $T$ in \cite{WallConference}, see page 16), Wall defines $L$-groups $L_\ast^H(R)$ \cite[p.\,20]{WallConference} (the dependence on $\alpha$ and $u$ is not reflected in his notation). In the case $R=\bZ[\pi_1(M)]$, $u=1$, $\alpha(g)=w(g)\cdot g^{-1}$, and $H$ the image of $\pm\pi_1(M)$ in $K_1(R)$, the groups denoted $L^s_{*}(\bZ[\pi_1(M)],w)$ above are Wall's $L_{*}^H(R)$ \cite[p.\,24-25]{WallConference}. In \cite[Theorem 7.3]{WallHermitianV} he proves that $L^H_\ast(R)$ is degreewise finitely generated for a certain choice of invariant subgroup $H\le K_1(R)$ and any anti-structure $(R,\alpha,u)$ such that the additive group of $R$ is finitely generated (this holds in our case $R=\bZ[\pi_1(M)]$ since $\pi_1(M)$ is finite) and that the $\bQ$-algebra $R\otimes\bQ$ is semisimple (this holds in our case by Maschke's theorem). There are no restrictions on $\alpha$ or $u$. This only implies that $L^H_\ast(\bZ[\pi_1(M)])$ is degreewise finitely generated for a specific choice of $H$, but as Wall indicates on page 286 of loc.cit., this is enough to conclude finite-generation for any other choice of $H$: one applies a form of the Rothenberg sequence \cite[Theorem 3, p.\,21]{WallConference} which compares $L_\ast^H(\bZ[\pi_1(M)])$ with $L_\ast^{H'}(\bZ[\pi_1(M)])$ for involution invariant subgroups $H\le H'$ of $K_1(\bZ[\pi_1(M)])$. The relative terms are the Tate cohomology groups of the induced involution on $H'/H$, so they are finite since they are $2$-torsion by definition and finitely generated as subquotients of the finitely generated group $K_1(\bZ[\pi_1(M)])$.\end{proof}

\begin{proposition}\label{prop:diff-in-haut-arithmetic}Let $d\ge6$ and $M$ a closed smooth $d$-manifold with finite fundamental group at all basepoints. The group $\pi_0(\hAut^{\cong}(M))$ of homotopy classes of self-equivalences that are homotopic to a diffeomorphism is commensurable up to finite kernel with an arithmetic group.
\end{proposition}

\begin{proof}
Since every diffeomorphism is simple and preserves the tangent bundle, the group $\pi_0(\hAut^{\cong}(M))$ is contained in the subgroup $\pi_0(\hAut^s(M))_{[T^sM]}\le  \pi_0(\hAut^s(M))$ of simple self-equivalences that stabilise the stable tangent classifier $[T^sM]\in[M,B\mr{O}]$ with respect to the $\pi_0(\hAut(M))$-action by precomposition. We first argue that the group $\pi_0(\hAut^s(M))_{[T^sM]}$ is commensurable up to finite kernel to an arithmetic group. 

Arguing as in the first part of the proof of \cref{thm:simple-htpy-mcg}, we may, firstly, assume that $M$ is connected and, secondly, replace the group $\pi_0(\hAut^s(M))_{[T^sM]}$ by its analogue $\pi_0(\hAut^{s,\pi_1}_\ast(M))_{[T^sM]}$ where we require the equivalences additionally to preserve a basepoint and to induce the identity on fundamental groups. The group $\pi_0(\hAut^{s,\pi_1}_\ast(M))_{[T^sM]}$ is commensurable up to finite kernel to an arithmetic group by the second part of \cref{thm:simple-htpy-mcg} since it is commensurable up to finite kernel to the stabiliser of the image $[T^sM_\bQ]$ of $[T^sM]$ under the map $[M,B\mr{O}]\ra [M,B\mr{O}_\bQ]$ induced by rationalising the simple space $B\mr{O}$ (which has finite preimages by obstruction theory). This uses that $B\mr{O}_\bQ$ is a product of Eilenberg--MacLane spaces, so stabilising $[T^sM_\bQ]$ is equivalent to stabilising a finite set of cohomology classes.

It thus suffices to show that  $\pi_0(\hAut^{\cong}(M))\le \pi_0(\hAut^s(M))_{[T^sM]}$ has finite index. To do this, we use the simple surgery exact sequence from the previous proof, but this time the part
\begin{equation}\label{equ:surgery-sequence-sets}
	L^s_{d+1}(\bZ[\pi_1(M)],w)\xlra{\omega} \cS^s(M)\xlra{\eta} [M,\mr{G}/\mr{O}].
\end{equation}
Several properties of this sequence will play a role:
\begin{enumerate}[label=(\alph*)]
\item\label{surg-prop-a} The outer terms are abelian groups (for $[M,\mr{G}/\mr{O}]$ this uses the infinite loop space structure on $\mr{G}/\mr{O}$). The simple structure set $\cS^s(M)$ (the set of homotopy classes of simple homotopy equivalences from closed $d$-manifolds to $M$, up to precomposition with diffeomorphisms) is a pointed set with basepoint $[\mr{id}_M]\in \cS^s(M)$.
\item\label{surg-prop-b}  The sequence is exact as a sequence of pointed sets (see \cite[Theorems 10.3, 10.5]{WallBook}).
\item\label{surg-prop-c}  The group $L^s_{d+1}(\bZ[\pi_1(M)],w)$ acts on $\cS^s(M)$ (in the category of \emph{unpointed} sets) such that two elements of $\cS^s(M)$ have the same image under $\eta$ if and only if they lie in the same orbit (see Theorem 10.5 loc.cit.). The map $\omega$ is given by acting on $[\mr{id}_M]\in\cS^s(M)$.
\item\label{surg-prop-d}  The map $\eta$ is $\pi_0(\hAut^s(M))$-equivariant with respect to appropriate actions on source and target (in the category of \emph{unpointed} sets): an equivalence $[\phi]\in \pi_0(\hAut^s(M))$ acts on $\cS^s(M)$ by postcomposition and on $[M,\mr{G}/\mr{O}]$ by sending $[f]\in [M,\mr{G}/\mr{O}]$ to $[f \circ \overline{\phi}]+\eta(\phi)$ where $\overline{\phi}$ is a homotopy inverse to $\phi$ (see e.g.\,\cite[Lemma\,3.3]{BerglundMadsen}).
\item\label{surg-prop-e}  Writing $j\colon \mr{G}/\mr{O}\ra B\mr{O}$ for the usual map, the composition 
\begin{equation}\label{equ:comp-surg}\cS^s(M)\xlra{\eta} [M,\mr{G}/\mr{O}]\xlra{j^*} [M,B\mr{O}]\end{equation}
sends $[f\colon N\ra M]\in \cS^s(M)$ to $(T^sM-\overline{f}^*(T^sN))\in [M,B\mr{O}]$ (cf.\,\cite[p.\,113-114]{WallBook}).
\end{enumerate}
Equipping $[M,B\mr{O}]$ with the $\pi_0(\hAut^s(M))$-action defined by the analogue of the formula in \ref{surg-prop-d} using the composition \eqref{equ:comp-surg} in place of $\eta$ (note that this action is \emph{not} the one by precomposition), the sequence \eqref{equ:comp-surg} consists of $\pi_0(\hAut^s(M))$-equivariant maps. The final map in \eqref{equ:comp-surg} has finite preimages by obstruction theory since $G$ has finite homotopy groups, so the stabiliser $\pi_0(\hAut^s(M))_\eta\le \pi_0(\hAut^s(M))$ of the basepoint in $[M,\mr{G}/\mr{O}]$ (the constant map) has finite index in the stabiliser of the basepoint in $[M,B\mr{O}]$. The latter agrees with $\smash{\pi_0(\hAut^s(M))_{[T^sM]}}$ as a result of  \ref{surg-prop-e}. It thus suffices to show that stabiliser $\pi_0(\hAut^{\cong}(M))$ of the basepoint $[\mr{id}_M]\in\cS^s(M)$ has finite index in the stabiliser $\pi_0(\hAut^s(M))_\eta$ of the basepoint in $[M,\mr{G}/\mr{O}]$, i.e.\,that the set $\pi_0(\hAut^s(M))_\eta/\pi_0(\hAut^{\cong}(M))$ of cosets is finite. By equivariance of $\eta$ and the fact that $\mr{ker}(\eta)=\mr{im}(\omega)$ by \ref{surg-prop-b}, the latter agrees with
\begin{equation}\label{equ:intersected-structure-set}
\Big(\pi_0(\hAut^s(M))/\pi_0(\hAut^{\cong}(M))\Big)\cap \mr{im}(\omega)\subset \cS^s(M),
\end{equation}
where we view the left-hand quotient as a subset of $\cS^s(M)$ via the action of $\pi_0(\hAut^s(M))$ on the basepoint $[\mr{id}_M]\in\cS^s(M)$, with stabiliser $\pi_0(\hAut^{\cong}(M))$. This leaves us with showing that the intersection \eqref{equ:intersected-structure-set} is finite. The argument for this depends on the parity of the dimension.

For even $d$, in fact the whole image $\mr{im}(\omega)$ of $\omega\colon L^s_{d+1}(\bZ[\pi_1(M)],w)\ra \cS^s(M) $ is finite, since the entire group $L^s_{d+1}(\bZ[\pi_1(M)],w)$ is finite: by another application of the Rothenberg sequence whose relative terms are finite (see the proof of \cref{thm:mcg-finfty}), it suffices to show finiteness for the variant of $L_{d+1}^s(\mathbb{Z}[\pi_1(M)],w)$ for any involution-invariant subgroup of $K_1(\mathbb{Z}[\pi_1(M)])$. For a certain choice of subgroup this is stated in \cite[p.\,2 (4)]{WallHermitianVI} with a reference to \cite{WallHermitianV}, but since it is cumbersome to extract the statement from the latter, we refer to \cite[Corollary 2]{Hausmann} instead which shows that the variant for the trivial subgroup $H=K_1(\mathbb{Z}[\pi_1(M)])$ is annihilated by $8$, so is finite by finite-generation (see the end of the proof of \cref{thm:mcg-finfty}).

For odd $d$, we consider the subgroup
$\Lambda \le L^s_{d+1}(\bZ[\pi_1(M)],w)$ of elements that are torsion in the cokernel of the map $\iota\colon L^s_{d+1}(\bZ)\ra L^s_{d+1}(\bZ[\pi_1(M)],w)$ induced by the inclusion of rings with anti-automorphism $\bZ\le \bZ[\pi_1(M)]$ where we equip $\bZ$ with the trivial anti-automorphism. We will finish the proof by showing that \eqref{equ:intersected-structure-set} is finite in two steps:
\begin{enumerate}
\item\label{omega-lambda-finite} The image $\omega(\Lambda)$ of the subgroup $\Lambda$ under $\omega$ is finite and
\item\label{smaller intersection} \eqref{equ:intersected-structure-set} agrees with the a priori smaller intersection $\big(\pi_0(\hAut^s(M))/\pi_0(\hAut^{\cong}(M))\big)\cap \omega(\Lambda)$.
\end{enumerate}

To see \ref{omega-lambda-finite}, note that the image of $\iota\colon L^s_{d+1}(\bZ)\ra L^s_{d+1}(\bZ[\pi_1(M)],w)$ has finite index in $\Lambda$ since by construction the quotient $\Lambda/\mr{im}(\iota)$ is isomorphic to the torsion subgroup of the finitely generated group $L^s_{d+1}(\bZ[\pi_1(M)],w)/\mr{im}(\iota)$. As $\omega$ is given by acting on $[\mr{id}_M]\in \cS^s(M)$ with respect to the action of $L^s_{d+1}(\bZ[\pi_1(M)],w)$ on $\cS^s(M)$, it thus suffices to show that the potentially smaller image $\mr{im}(\omega\circ\iota)$ is finite which in turn follows from the naturality of the surgery exact sequence by choosing an embedded disc $D^d\subset M$, and the fact that $\cS^s_\partial(D^d)$ is finite as a result of the classification of exotic spheres. This proves \ref{omega-lambda-finite}.

To prove \ref{smaller intersection}, by definition of the subgroup $\Lambda$, it suffices to show that for a given class $x\in L^s_{d+1}(\bZ[\pi_1(M)],w)$ that maps under $\omega$ to \eqref{equ:intersected-structure-set}, there is a multiple $n\cdot x$ with $n\neq0$ that lies in the image of the map $\iota\colon L^s_{d+1}(\bZ)\ra L^s_{d+1}(\bZ[\pi_1(M)],w)$. It is enough to test this property after applying a morphism with domain $L^s_{d+1}(\bZ[\pi_1(M)],w)$ and finite kernel. The morphism $\smash{\sigma_\bZ\colon L^s_{d+1}(\bZ[\pi_1(M)],w)\ra R_{\bC}(\pi^+_1(M))}$ we shall use has target the complex representation ring of the kernel $\pi_1^+(M)\coloneqq\ker(w\colon \pi_1(M)\ra \{\pm1\})$. It is given as the composition 
\begin{equation}\label{equ:multi-signature-comp}
	L^s_{d+1}(\bZ[\pi_1(M)],w)\lra L^s_{d+1}(\bR[\pi_1(M)],w)\xlra{\sigma_\bR}R_{\bC}(\pi^+_1(M))
\end{equation}
of the map induced by the inclusion $\bZ\le \bR$ and the \emph{multi-signature} $\sigma_\bR$ (see \cite[Section 2.2]{WallHermitianVI}). This composition indeed has finite kernel, which follows by combining (i) that $L^s_{d+1}(\bZ[\pi_1(M)],w)$ is finitely generated (see the end of the proof of \cref{thm:mcg-finfty}), (ii) that the first map in \eqref{equ:multi-signature-comp} has finite kernel by \cite[Theorem 7.3]{WallHermitianV} (note that this result is phrased for a differently decorated $L$-group, but this implies what we need by the Rothenberg sequence, as in the proof of \cref{thm:mcg-finfty}), and (iii) that the kernel of the second map in \eqref{equ:multi-signature-comp} is torsion (see \cite[Theorem 2.2.1]{WallHermitianVI}; note that the result in the non-orientable case is on page 22). 

We next identify the image $\sigma_{\bZ}(x)\in R_{\bC}(\pi^+_1(M))$ of the given class $x\in L^s_{d+1}(\bZ[\pi_1(M)],w)$. To begin with, note that since the class $x$ maps to the intersection \eqref{equ:intersected-structure-set} under $\omega$, it can be represented as the surgery obstruction of a degree $1$ normal map $W\ra M\times [0,1]$ of \emph{self}-bordisms of $M$ which is the identity on one end and a simple homotopy equivalence on the other (We pass between $L$-groups and surgery problems as in \cite[§5]{WallBook} which involves the choice of orientation of the universal cover $\widetilde{M}$; see page 46 loc.cit.. We make this choice once and for all.). In these terms, the class $\sigma_\bZ(x)$ is given (see \cite[Section 13 B]{WallBook}) as a difference \[\sigma_\bZ(x)=\sigma(\pi_1^+(M),\widetilde{W})-\sigma(\pi_1^+(M),\widetilde{M}\times I)\in R_{\bC}(\pi^+_1(M))\] where $\sigma(G,N)$, for an oriented compact manifold $N$ with orientation-preserving action by a finite group, is the \emph{$G$-signature} in the sense of \cite[p.\,578-579, 587-588]{AtiyahSingerIII}. Here the orientations of $\smash{\widetilde{W}}$ and $\smash{\widetilde{M}}\times I$ are induced by the chosen orientation of $\smash{\widetilde{M}}$. The second summand $\sigma(\pi_1^+(M),\widetilde{M}\times I)$ in fact vanishes; this follows from the definition on p.\ 588 loc.cit. since the map $\varphi$ on the top of that page is zero for $\smash{\widetilde{M}\times I}$ because the inclusion $\smash{\widetilde{M}\times \{0,1\}\subset \widetilde{M}\times I}$ has a homotopy section.

To finish the proof of \ref{smaller intersection}, we are thus left to show that some multiple $n\cdot \sigma(\pi_1^+(M),\widetilde{W})\in R_{\bC}(\pi^+_1(M))$ with $n\neq 0$ lies in the image of the composition $(\sigma_\bZ\circ\iota)\colon L^s_{d+1}(\bZ)\ra R_{\bC}(\pi^+_1(M))$. For this, we use that the bordism groups $\Omega^{\mr{SO}}_\ast(BG)$ of oriented $d$-manifolds with a free action of a finite group $G$ are finite in odd degrees by an application of the Atiyah--Hirzebruch spectral sequence, so there exists a positive integer $m>0$ such that $\smash{\sqcup^m\widetilde{M}}$ bounds $\pi_1^+(M)$-equivariantly a manifold $P$ with free action. Gluing $P$ to both ends of $\smash{\sqcup^m\widetilde{W}}$, we obtain a closed oriented $\pi_1^+(M)$-manifold $V$, still with free action. Moreover, we have 
\[\sigma(\pi_1^+(M), V)=\sigma(\pi_1^+(M), P)+\sigma(\pi_1^+(M),\sqcup^m\widetilde{W})+\sigma(\pi_1^+(M),-P)=m\cdot \sigma(\pi_1^+(M),\widetilde{W})\]
 by the additivity of the $G$-signature \cite[Proposition (7.1)]{AtiyahSingerIII} and the fact that taking the opposite orientation (indicated by a minus-sign) negates the $G$-signature. As $m\neq0$ it thus suffices to show $n\cdot \sigma(\pi_1^+(M),V)\in \mr{im}(\sigma_\bZ\circ\iota)$ for some $n\neq0$, which is what we do next.
  
Since $V$ is a \emph{closed} manifold, the class $\sigma(\pi_1^+(M), V)\in R_\bC(\pi_1^+(M))$ is a multiple of the regular representation (see \cite[Proposition 13B.1]{WallBook} or specialise \cite[(6.12)]{AtiyahSingerIII} to free actions; this uses that a class in $R_\bC(\pi_1^+(M))$ is determined by its character.). To use this to prove the remaining claim, we distinguish some cases. If $d+1 \equiv 2 \pmod 4$ or when the orientation character $w$ is nontrivial, we show that the only multiple of the regular representation in the image of $\sigma_{\bZ}$ is $0$, which then implies that $\sigma(\pi_1^+(M), V)$ is trivial, so in particular in the image of $(\sigma_\bZ\circ\iota)$. For this, we use that a class in $R_\bC(\pi_1^+(M))$ is determined by its character, and read off from \cite[Theorem 2.2.1]{WallHermitianVI} which characters are realised by classes in the image of $\sigma_\bZ$. Firstly, if $d+1 \equiv 2 \pmod 4$, then all characters in the image take values in $i\cdot\bR\subset \bC$, which is not the case for any nontrivial multiple of the regular representation (consider the value at $1\in\pi_1^+(M) $). Secondly, if $w$ is non-trivial then characters in the image of $\sigma_\bZ$ satisfy a certain conjugation condition (see Theorem 2.2.1 loc.\,cit.) which is again not satisfied for any nontrivial multiple of the regular representation (as before, consider the value at $1\in\pi_1^+(M) $).

It remains to consider the case where $d+1 \equiv 0 \pmod 4$ and $w$ is trivial. Directly from the definition of $\sigma_\bZ$ in \cite[Section 2.2]{WallHermitianVI} we see that the image $\mr{im}(\sigma_\bZ\circ\iota)\subset R_{\bC}(\pi^+_1(M))$ is contained in the cyclic subgroup $\langle R\,\rangle$ spanned by the regular representation. If we knew that $\sigma_\bZ\circ\iota$ is injective, then since $L^s_{d+1}(\bZ)\cong \bZ$, the inclusion $\mr{im}(\sigma_\bZ\circ\iota)\le \langle R\,\rangle$ would have finite index, so since $\sigma(\pi_1^+(M), V)\in \langle R\,\rangle$ we would have $\smash{n\cdot \sigma(\pi_1^+(M), V)\in \mr{im}(\sigma_\bZ\circ\iota)}$ for some $n\neq0$, as claimed. To prove injectivity of $\sigma_\bZ\circ\iota$, note that since $w$ is trivial, the map $ L^s_{d+1}(\bZ)\ra L^s_{d+1}(\bZ[\pi_1(M)],w)$ is split injective (consider the augmentation $\bZ[\pi_1(M)]\ra \bZ$ which is a morphism of rings with anti-automorphism), so $\sigma_\bZ\circ\iota$ has to be injective as $\sigma_\bZ$ has finite kernel and $\iota$ has domain $L^s_{d+1}(\bZ)\cong \bZ$. This concludes the proof.
\end{proof}

\begin{remark}\label{rem:relative-arithm}
 \label{relative-arit}To our knowledge, the analogue of \cref{thm:mcg-finfty} for manifolds with nonempty boundary and diffeomorphisms fixing the boundary has not been proved yet, though there is a version in the simply connected case \cite{KupersMCG}. Such a relative finiteness result would simplify several steps in the proof of our main \cref{thm:fg pi}.
\end{remark}

\begin{example}\label{exam:counterexample} We now construct counterexamples in every odd dimension $d\ge5$ to the claim in \cite[Proposition 15]{TriantafillouhAut} and \cite[Theorem 5.1]{Triantafillou} that the image of the group $\pi_0(\Diff(M))$ in $\pi_0(\hAut(M))_{[T^sM]}$ has finite index for all closed smooth orientable manifolds of dimension $d\ge5$ with finite fundamental group, by showing that the inclusion $\pi_0(\hAut^s(M))_{[T^sM]} \leq \pi_0(\hAut(M))_{[T^sM]}$ is not of finite index. This invalidates the proofs of \cite[Theorems 3--4, Proposition 15, Corollary 16]{TriantafillouhAut} and \cite[Theorem 5.1--5.2, 5.4, Corollary 5.3]{Triantafillou}.
	
First we explain how ideas of Lawson \cite{LawsonFinite,LawsonTrivializing} lead to closed connected smooth manifolds $M$ of dimension $d$ with the following properties:
	\begin{enumerate}
		\item\label{counter-i} $M$ is stably parallelisable.
		\item\label{counter-ii} Every element $\tau\in \Wh(\pi_1(M))$ is realised by an inertial $h$-cobordism $W_\tau \colon M \leadsto M$ such that the induced automorphism of $\pi_1(M)$ is the identity.
	\end{enumerate}
For this, fix a finitely-presented group $\pi$ for which the map $\mathrm{GL}_n(\mathbb{Z}[\pi]) \to \Wh(\pi)$ is surjective for some $n$, and a based connected finite $2$-complex $X$ with $\pi_1(X)=\pi$. Choose an embedding $X \hookrightarrow \mathbb{R}^{d+1}$ (which exists for $d \geq 4$) and a regular neighbourhood $N$ with a retraction $r \colon N \to X$ as the homotopy inverse of the inclusion $\mathrm{inc}\colon X \hookrightarrow N$. Choosing $M \coloneqq \partial N$, the first condition is satisfied since the $d$-manifold $M$ embeds by construction into $\bR^{d+1}$ with trivial normal bundle. To show \ref{counter-ii}, use that any $\tau \in \Wh(\pi)$ arises as the torsion of a self-equivalence $f_\tau \colon X \vee \bigvee_n S^2 \to X \vee \bigvee_n S^2$ that is the identity on $X$, so it in particular induces the identity on $\pi$ (represent $\tau$ by a matrix $A\in \mathrm{GL}_n(\mathbb{Z}[\pi])$,  use it as instruction of how to construct a $\pi$-equivariant self-equivalence of the universal cover, and take the quotient by $\pi$). The composition $(\mathrm{inc} \circ f_\tau \circ r) \colon N \to N$ is homotopic to an embedding $e_\tau \colon N \hookrightarrow \interior(N)$ (make it an immersion by Smale--Hirsch theory and then an embedding by general position) and its complement $W_\tau = N \setminus e_\tau(\interior(N))$ is the inertial $h$-cobordism $M\leadsto M$ sought after  (use excision for \ref{counter-ii}). 

If $\pi$ is finite, then $\mathrm{GL}_2(\mathbb{Z}[\pi]) \to \Wh(\pi)$ is surjective \cite[V, §4]{Bass}, so the above construction applies to give a $d$-manifold $M$ for any $d\ge 4$ with $\pi_1(M)=\pi$ and properties \ref{counter-i} and \ref{counter-ii}. For $\tau\in \Wh(\pi)$, we pick $W_\tau$ as in \ref{counter-ii} and consider the induced self-equivalence $g_\tau \colon M \to M$. As $M$ is stably parallelisable, $g_\tau$ clearly stabilises  $[T^sM] \in [M,B\rm{O}]$. The Whitehead torsion of $g_\tau$ is given by $\tau+(-1)^{d+1}\overline{\tau}\in \Wh(\pi)$ (use the composition and duality formula for Whitehead torsions \cite[Lemma 7.8, §10]{Milnor}), so since the involution on $\Wh(\pi)$ is trivial after passing to the maximal torsion-free quotient $\Wh(\pi)/\mathrm{tors}$ (see \cite[Corollary 6.10]{Milnor} or \cite[p.\,611]{WallNorms}), the torsion of $g_\tau$ is $2\cdot[\tau]\in \Wh(\pi)/\mathrm{tors}$ if $d$ is odd. Hence as long as $ \Wh(\pi)/\mathrm{tors}$ is non-trivial, the subgroup $\pi_0(\hAut^s(M))_{[T^sM]} \leq \pi_0(\hAut(M))_{[T^sM]}$ has infinite index. Examples of finite groups $\pi$ with non-trivial $\Wh(\pi)/\mathrm{tors}$ abound, e.g.~$\mathbb{Z}/5\mathbb{Z}$ will do \cite[Example 6.6]{Milnor}.
\end{example}

\section{Finiteness properties of stable homology groups}\label{section:GRW}
A key ingredient in the proof of \cref{thm:fg pi} is Galatius--Randal-Williams' work  \cite{GRWstable,GRWII,GRWI} on moduli spaces of manifolds and its extension by Friedrich \cite{Friedrich} to certain nontrivial fundamental groups. This section serves to use their work to deduce finiteness results for the homology of $\BDiff(M)$ for certain $M$ in a range.

In order to state a form of their results suitable for our purposes, we fix a closed manifold $M$ of even dimension $2n$ and a factorisation over a connected space $B$
\begin{equation}\label{equ:tangential-structure}
M\xlra{\ell_M} B\xlra{\lambda}B\mr{O}(2n)
\end{equation}
of a map $M\ra B\mr{O}(2n)$ classifying the tangent bundle of $M$. The result by the aforementioned authors we are about to state concerns the homotopy quotient
\begin{equation}\label{equ:homotopy-quotient}
\BDiff^\lambda(M)\coloneqq \mr{Bun}(TM,\lambda^*\gamma_{2n}) \sslash\Diff(M)
\end{equation}
of the action via the derivative of $\Diff(M)$ on the space of bundle maps from the tangent bundle $TM$ to the pullback $\lambda^*\gamma_{2n}$ along $\lambda$ of the universal $2n$-plane bundle $\gamma_{2n}\ra B\mr{O}(2n)$. The map $\ell_{M}$ is covered by such a bundle map $\ell_M\colon TM\ra \lambda^*\gamma_{2n}$, denoted by the same symbol. This determines a path component of \eqref{equ:homotopy-quotient}, which we denote by $\BDiff^\lambda(M)_{\ell_M}\subset \BDiff^\lambda(M)$. Furthermore, we fix a Moore--Postnikov $n$-factorisation
\begin{equation}\label{equ:factorisation-tangential-structure}
M\xlra{\rho_M}B'\xlra{u}B
\end{equation}
of $\ell_M$, i.e.\,a factorisation into an $n$-connected cofibration $\rho_M$ followed by an $n$-co-connected fibration $u$. We abbreviate $\theta\coloneqq (\lambda\circ u)$ and denote by $\mr{MT}\theta\coloneqq \mr{Th}(-\theta^*\gamma_{2n})$ the Thom spectrum of the inverse of the pullback of the universal bundle along $\theta \colon B'\ra B\mr{O}(2n)$. This spectrum admits an action by the group-like topological monoid $\hAut(u)$ of self-weak equivalences of $B'$ that commute with $B'\ra B$. A parametrised form of the Pontryagin--Thom construction gives rise to a canonical homotopy class of maps
\begin{equation}\label{equ:param-PT}
\BDiff^\lambda(M)_{\ell_M}\lra \Omega^\infty\mr{MT}\theta\sslash \hAut(u)
\end{equation}
whose effect on homology is subject of the work of Galatius--Randal-Williams mentioned above. 

Their main result implies that if $M$ is simply-connected and of dimension $2n\ge6$, the map \eqref{equ:param-PT}, when regarded as a map onto the path component it hits, induces an isomorphism in homology in a range of degrees depending on the \emph{genus} $g(M,\ell_M)$ of $(M,\ell_M)$. The genus is defined as the maximal number of disjoint embeddings $e\colon S^n\times S^n\backslash \mr{int}(D^{2n})\hookrightarrow M$ such that $(\ell_M\circ e)\colon S^n\times S^n\backslash \mr{int}(D^{2n})\ra B$ is null-homotopic. Their work was extended by Friedrich \cite{Friedrich} to manifolds with certain nontrivial fundamental groups. The version of this result we shall use reads as follows (see \cite[Theorem 12.4.5, Section 12.4.7]{GRWSurvey}).

\begin{theorem}[Friedrich, Galatius--Randal-Williams]\label{thm:stable-homology}
For a polycyclic-by-finite group $\pi$, there exists a function $\varphi_\pi \colon \bN_0 \to \bN_0$ with $\lim_{g\to\infty}\varphi_\pi(g)=\infty$ such that for any closed connected manifold $M$ of dimension $d=2n \geq 6$ with $\pi_1(M)\cong \pi$ and a factorisation as in \eqref{equ:tangential-structure}, the map
\[\BDiff^\lambda(M)_{\ell_M}\lra \Omega^\infty\mr{MT}\theta\sslash \hAut(u),
\]
regarded as a map onto the path-components hit, induces an isomorphism in integral homology
in degrees $\leq \varphi_\pi(g(M,\ell_M))$.\end{theorem}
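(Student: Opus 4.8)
The plan is to obtain this as a combination of two inputs. The first is the identification of the \emph{stable} homology of moduli spaces of $2n$-dimensional manifolds with tangential structures due to Galatius--Randal-Williams \cite{GRWstable,GRWII,GRWI}, in the $\hAut(u)$-equivariant form recorded in \cite[Theorem 12.4.5, Section 12.4.7]{GRWSurvey}. The second is Friedrich's homological stability theorem \cite{Friedrich}, which extends the stability result of \cite{GRWII} to cover tangential structures whose underlying space has virtually polycyclic --- equivalently polycyclic-by-finite --- fundamental group; it is this second ingredient that produces the function $\varphi_\pi$ and is the only place where the hypothesis on $\pi$ enters. To set things up, write $M\cong M_1\cup_\partial D^{2n}$ with $M_1\coloneqq M\setminus\mr{int}(D^{2n})$, so that $\partial M_1\cong S^{2n-1}$, and consider the moduli space $\BDiff^\theta_\partial(M_1)$ of $\theta$-structures on $M_1$ that are standard near the boundary. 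Since $\rho_M$ is $n$-connected and $u$ is $n$-co-connected, restricting structures to a collar of the disc identifies $\BDiff^\lambda(M)_{\ell_M}$ and $\BDiff^\theta_\partial(M_1)$ up to data concentrated in degrees $>n$, while the $\hAut(u)$-homotopy quotient on the right-hand side of \eqref{equ:param-PT} exactly accounts for the freedom in the choice of Moore--Postnikov space $B'$; this comparison, and its compatibility with the parametrised Pontryagin--Thom map, is the content of \cite[Section 12.4.7]{GRWSurvey}, to which I would appeal.

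Next I would invoke homological stability. Stabilising $M_1$ by iteratively gluing on copies of $W_{1,1}\coloneqq (S^n\times S^n)\setminus\mr{int}(D^{2n})$ equipped with a $\theta$-structure extending the one on the boundary sphere, Friedrich's theorem \cite{Friedrich} provides for each $k$ an integer $\varphi_\pi(k)$ --- depending only on $\pi\cong\pi_1(B')\cong\pi_1(M)$ and on $k$, and with $\varphi_\pi(k)\to\infty$ as $k\to\infty$ --- such that these stabilisation maps are isomorphisms on $H_*(-;\bZ)$ for $*\le k$ as soon as the $\theta$-genus of the manifold in question is at least $\varphi_\pi(k)$; here one uses that $\pi$ is polycyclic-by-finite and that $\theta$ satisfies the hypotheses of the stability theorem, which it does by construction. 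Moreover the $\theta$-genus of $M_1$ equals the genus $g(M,\ell_M)$ from the statement: an embedded copy of $W_{1,1}$ in $M$ admits a $\theta$-structure rel boundary extending $\ell_M$ if and only if the composite $W_{1,1}\ra M\xlra{\ell_M}B$ is null-homotopic, because $u$ is $n$-co-connected and $W_{1,1}$ has a CW structure with cells only in degrees $\le n$.

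Finally I would combine this with the stable homology computation. The parametrised Pontryagin--Thom map \eqref{equ:param-PT} is compatible with the stabilisation maps, and on the homotopy colimit $\hocolim_g\BDiff^\theta_\partial(W_{g,1})$ it induces, by the main results of \cite{GRWstable,GRWII,GRWI} in the form of \cite[Theorem 12.4.5]{GRWSurvey}, an isomorphism on integral homology onto the path component of $\Omega^\infty\mr{MT}\theta\sslash\hAut(u)$ that it hits. The map in the statement therefore factors as
\[\BDiff^\lambda(M)_{\ell_M}\lra \hocolim_g\BDiff^\theta_\partial(W_{g,1})\lra \Omega^\infty\mr{MT}\theta\sslash\hAut(u),\]
where, by the two previous paragraphs, the first map is a homology isomorphism in degrees $\le\varphi_\pi(g(M,\ell_M))$ and the second is a homology isomorphism onto the component it hits; hence so is the composite, which is the assertion after relabelling $\varphi_\pi$.

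The main obstacle is not homological stability itself --- that is exactly Friedrich's theorem --- but the bookkeeping needed to see that its hypotheses genuinely apply in the present generality and that the resulting range depends only on $\pi$ and the degree: one must check that the Moore--Postnikov structure $\theta=\lambda\circ u$ is of the admissible kind, that $B'$ is connected with $\pi_1(B')\cong\pi$, that the genus in the sense of the statement coincides with the $\theta$-genus, and --- most delicately --- that passing between the closed manifold $M$, the bounded manifold $M_1$, and the model manifolds $W_{g,1}$, together with the identification of the correct $\hAut(u)$-equivariant target, does not erode the homology range. These comparisons are precisely what \cite[Section 12.4.7]{GRWSurvey} provides; granting them, the rest is a formal assembly of cited results.
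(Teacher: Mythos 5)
The paper does not prove \cref{thm:stable-homology}; it states it as a black box with the pointer ``see \cite[Theorem 12.4.5, Section 12.4.7]{GRWSurvey}''. In other words, the role of this statement in the paper is that of an external input, combining Galatius--Randal-Williams's stable homology result with Friedrich's homological stability for tangential structures whose base has polycyclic-by-finite fundamental group. Your proposal correctly identifies exactly those two ingredients, how they assemble (stability gives a range in which $H_*(\BDiff^\theta_\partial(-))$ agrees with the stable value; the stable value is computed as the homology of a component of $\Omega^\infty\mr{MT}\theta\sslash\hAut(u)$), and where the hypotheses enter (polycyclic-by-finiteness is used only in Friedrich's stability theorem, and is what produces the function $\varphi_\pi$). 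You also correctly locate the subtle bookkeeping in the transition between the closed manifold $M$, the bounded $M_1$, the $\hAut(u)$-equivariant target, and the comparison of genera, and you defer those to the same reference the paper uses. So the proposal matches the paper's approach, which is essentially to cite.

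One small inaccuracy worth flagging: you write that the map factors through $\hocolim_g \BDiff^\theta_\partial(W_{g,1})$, whereas the stabilisation sequence one actually uses starts from $M_1$, i.e.\ one takes $\hocolim_g \BDiff^\theta_\partial(M_1 \natural W_{g,1})$. These colimits are not literally the same space, although they both map to $\Omega^\infty\mr{MT}\theta$ by homology isomorphisms onto their respective components; the stability theorem applies to the $M_1$-sequence, and the stable homology identification is what allows one to compare across different starting manifolds. This is exactly the sort of comparison that \cite[Section 12.4.7]{GRWSurvey} carries out carefully, so it does not affect the substance of your outline, but as written the factorisation is slightly off.
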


\begin{remark}
The works mentioned above provide an explicit choice of $\varphi_\pi$, namely $\varphi_\pi(g)=\tfrac{1}{2}(g-h(\pi)-6)$ where $h(\pi)$ is the \emph{Hirsch length} of $\pi$---the number of infinite cyclic summands in the factors of a subnormal series. This choice plays no role in our arguments.
\end{remark}

\cref{thm:stable-homology} serves us to prove a finiteness result for the homology of $\BDiff^\lambda(M)_{\ell_M}$ in a range and under some conditions on $B$. One of the conditions we opt for is technical and certainly not optimal, but it suffices for our purposes. It involves
\begin{enumerate}
\item the $k$-truncation $\tau_{\le k}X$ of a space together with its canonical map $X\ra \tau_{\le k}X$,
\item the \emph{rationalisation above the fundamental group} of a space $X$, i.e.\,the fibrewise rationalisation $X_\bQ^{\mr{fib}}$ of the $1$-truncation $X\ra \tau_{\le 1}X$ which comes with a natural map $X\ra X_\bQ^{\mr{fib}}$ over $\tau_{\le 1}X$ (see the proof of \cref{prop:triantafillou-haut}),
\item the \emph{generalised Eilenberg--MacLane space} $K(G,A)$ associated to a group $G$ with a degree-preserving action on a graded abelian group $A=\bigoplus_{n\ge2}A_n$. This is the based space given as the homotopy orbits $K(A)_{hG}$ of the $G$-action on the Eilenberg--MacLane space $K(A)=\prod_{i\ge2}K(A,n)$. It has the property that there exist isomorphisms
\[
\pi_k(K(G,A))\cong\begin{cases}
G&\text{if }k=1,\\
A_n&\text{if }k=n,\\
0&\text{otherwise,}
\end{cases}
\]
with respect to which the action of $\pi_1(K(G,A))$ on $\pi_n(K(G,A))$ is the given one. There is a preferred twisted cohomology class $\iota \in H^n(K(G,A);A_n)$ with the property that for a connected based space $X$ there is a natural bijection
\[\textstyle{[X,K(G,A)]_*\cong \bigsqcup_{\phi\in\mr{Hom}(\pi_1(X),G)}\prod_{n\ge 2}H^n(X;\phi^*A_n),}\]
induced by pulling back $\iota$.
\end{enumerate}

\begin{theorem}\label{thm:stable-finiteness}
Fix a closed connected manifold $M$ of dimension $d=2n \geq 6$ and a factorisation as in \eqref{equ:tangential-structure} such that
\begin{enumerate}
\item $\pi_1(M)$ and $\pi_1(B)$ are finite,
\item $\pi_k(B)$ is finitely generated for $k\ge2$,
\item $\tau_{\le 2n}B^{\mr{fib}}_{\bQ}$ is weakly equivalent to a generalised Eilenberg--MacLane space.
\end{enumerate}
Then the group $H_k(\BDiff^\lambda(M)_{\ell_M};\bZ)$ is finitely generated for $k \le \varphi_{\pi_1(M)}(g(M,\ell_M))$ where $\varphi_{\pi_1(M)}$ is a function as in \cref{thm:stable-homology}.
\end{theorem}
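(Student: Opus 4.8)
The plan is to use \cref{thm:stable-homology} to trade $\BDiff^\lambda(M)_{\ell_M}$, in the relevant range of degrees, for the path component $Z\subseteq\Omega^\infty\mr{MT}\theta\sslash\hAut(u)$ hit by the parametrised Pontryagin--Thom map \eqref{equ:param-PT}, and then to prove that $Z$ is of finite type. Since $\pi_1(M)$ is finite and hence polycyclic-by-finite, \cref{thm:stable-homology} applies and gives an isomorphism $H_k(\BDiff^\lambda(M)_{\ell_M};\bZ)\cong H_k(Z;\bZ)$ for $k\le\varphi_{\pi_1(M)}(g(M,\ell_M))$; as spaces of finite type have degreewise finitely generated homology, it is enough to prove that $Z$ is of finite type. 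For this I would use the fibration sequence $\Omega^\infty\mr{MT}\theta\to\Omega^\infty\mr{MT}\theta\sslash\hAut(u)\to B\hAut(u)$, restricted over the connected space $B\hAut(u)$ to the component $Z$; by \cref{prop:finite-type-fibrations}(i) it then suffices to show that (a) the homotopy fibre of $Z\to B\hAut(u)$, which is a disjoint union of path components of $\Omega^\infty\mr{MT}\theta$, is of finite type, and (b) $B\hAut(u)$ is of finite type.

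For (a), observe first that $\rho_M\colon M\to B'$ is $n$-connected with $n\ge3$, so $\pi_1(B')\cong\pi_1(M)$ is finite while $\pi_k(B')$ is finitely generated for every $k$: for $k<n$ it agrees with $\pi_k(M)$, finitely generated by \cref{coro:pi1-finite-pik-fg}; for $k>n$ it agrees with $\pi_k(B)$, finitely generated by assumption; and $\pi_n(B')$ is a quotient of $\pi_n(M)$. Hence $B'$ is of finite type by \cref{prop:finite-type-homotopy-groups}(ii), and so $H_*(B';A)$ is degreewise finitely generated for every $\bZ[\pi_1(B')]$-module $A$ which is finitely generated over $\bZ$ (as in the proof of \cref{cor:fg-homotopy-implies-fg-homology}). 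Applying this to the orientation local system of $\theta^*\gamma_{2n}$ together with the Thom isomorphism, the bounded-below spectrum $\mr{MT}\theta=\mr{Th}(-\theta^*\gamma_{2n})$ has degreewise finitely generated homology, hence degreewise finitely generated homotopy groups by the Atiyah--Hirzebruch spectral sequence modulo the Serre class of finitely generated abelian groups. Consequently every path component of $\Omega^\infty\mr{MT}\theta$ has finitely generated homotopy groups and finitely generated abelian --- so finite-type --- fundamental group, and is thus of finite type by \cref{prop:finite-type-homotopy-groups}(ii); it remains only to note that the homotopy fibre in question consists of finitely many such components, equivalently that the orbit of the class of $M$ under the $\pi_0(\hAut(u))$-action on $\pi_0(\mr{MT}\theta)$ is finite, which one checks directly.

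For (b), recall that $\hAut(u)$ is group-like, so $\pi_1(B\hAut(u))=\pi_0(\hAut(u))$ and $\pi_{k+1}(B\hAut(u))=\pi_k(\hAut(u),\mr{id})$ for $k\ge1$; by \cref{prop:finite-type-homotopy-groups}(ii) it therefore suffices to prove that $\pi_k(\hAut(u),\mr{id})$ is finitely generated for $k\ge1$ and that $\pi_0(\hAut(u))$ is of finite type. For the higher homotopy groups, identify $\hAut(u)$ with a union of path components of $\Map_B(B',B')\cong\mr{Sect}(B'\times_B B'\to B')$, the fibration being the first projection and its fibre the homotopy fibre $F$ of $u$. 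As $u$ is $n$-co-connected, $F$ is $(n-1)$-truncated; moreover $\pi_0(F)$ is finite because $\pi_1(B)$ is, $\pi_1(F)$ is an extension of a finite group by a finitely generated abelian group (a quotient of $\pi_2(B)$), hence polycyclic-by-finite, and $\pi_k(F)$ is finitely generated for $k\ge2$ (built from $\pi_k(M)$ and $\pi_{k+1}(B)$). \cref{cor:section-spaces-F-coconnected} then shows that $\pi_1$ of this section space is polycyclic-by-finite and its higher homotopy groups are finitely generated, whence $\pi_k(\hAut(u),\mr{id})$ is finitely generated for all $k\ge1$.

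The finiteness of $\pi_0(\hAut(u))$ is, I expect, the main obstacle. The idea is to use that the Moore--Postnikov tower $B=Y_0\leftarrow Y_1\leftarrow\cdots\leftarrow Y_{n-1}=B'$ of $u$ is \emph{finite} --- it has at most $n-1$ nontrivial stages, as $F$ is $(n-1)$-truncated --- and that $\hAut_B(Y_0)=\hAut_B(B)$ is contractible. Building up one homotopy group $A_j=\pi_j(F)$ of the fibre at a time, one obtains for $j\ge2$ an exact sequence presenting $\pi_0(\hAut_B(Y_j))$ as an extension of the stabiliser of the $j$-th $k$-invariant $k_j\in H^{j+1}(Y_{j-1};A_j)$ inside $\pi_0(\hAut_B(Y_{j-1}))\times\mr{Aut}_{\bZ[\pi_1(M)]}(A_j)$ by a quotient of the finitely generated abelian group $H^{j}(Y_{j-1};A_j)$ --- finitely generated because each intermediate stage $Y_{j-1}$ is again a space of finite type --- with the bottom stage treated along the same lines using that $\pi_1(F)$ is polycyclic-by-finite. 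The group $\mr{Aut}_{\bZ[\pi_1(M)]}(A_j)$ is arithmetic, and the crucial point is that the stabiliser of $k_j$ is again commensurable up to finite kernel with an arithmetic group; this should follow from the rational-homotopy arguments underlying the proof of \cref{prop:triantafillou-haut}, carrying along inductively not just the group but also a linear algebraic group over $\bQ$ acting on the relevant rational cohomology, so that stabilisers of cohomology classes are algebraic subgroups whose intersections with arithmetic groups remain arithmetic. It is exactly here that hypothesis (iii) is indispensable: truncating at degree $2n=\dim M$ guarantees that all the rational cohomology groups and automorphism actions occurring along the tower are governed by products of general linear groups, hence are algebraic. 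Since the class of groups of finite type contains all arithmetic and all finitely generated abelian groups and is closed under extensions (\cref{lem:finite-type-groups-closure}), an induction over the finitely many stages of the tower then shows that $\pi_0(\hAut(u))$ is of finite type, which completes the argument.
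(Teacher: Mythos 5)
Your overall plan matches the paper's --- reduce via \cref{thm:stable-homology} to the stable side, split the homotopy quotient into a spectrum part and a $B\hAut$ part, and control each --- but there is a genuine gap in step (a), namely your assertion that the orbit of $[M,\rho_M]$ under $\pi_0(\hAut(u))$ acting on $\pi_0(\mr{MT}\theta)$ is finite, ``which one checks directly.'' No argument is given, and I don't believe this is true in general: the action on $\pi_0(\mr{MT}\theta)$, which is a finitely generated abelian group, factors through an arithmetic group acting on $H_{2n}(B';\bQ_\omega)$, and orbits of arithmetic groups acting on rational vector spaces are typically infinite. The paper avoids this entirely by invoking the homotopy orbit–stabiliser correspondence (from \cite[Section 12.4.3]{GRWSurvey}): the relevant path component of $\Omega^\infty\mr{MT}\theta\sslash\hAut(u)$ is equivalent to $\Omega^\infty_{\rho_M}\mr{MT}\theta\sslash\hAut(u)_{\rho_M}$, a quotient of a single component by the stabiliser $\hAut(u)_{\rho_M}$ of $[M,\rho_M]$. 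This puts you in the situation of a fibration $\Omega^\infty_{\rho_M}\mr{MT}\theta\to(\cdots)\to B\hAut(u)_{\rho_M}$ with \emph{connected} fibre, and no finiteness of an orbit is needed. You should adopt this reformulation; the rest of your argument for the fibre (Thom isomorphism, finiteness of $H_*(B';\bZ_\omega)$) then goes through essentially as you wrote it, and indeed mirrors the paper.

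Your treatment of (b) is also a genuine departure from the paper and is considerably more speculative. Once you pass to the stabiliser $\hAut(u)_{\rho_M}$, the paper does not build up $\pi_0$ by an induction over the Moore--Postnikov tower of $u$ with stabilisers of $k$-invariants; rather it uses \cref{lem:truncation-relative-mapping-space} to replace $\hAut(u)$ by $\hAut(\tau_{\le 2n}u)$, compares integral to rational stabilisers (here is where $\pi_0(\mr{MT}\theta)\to H_{2n}(B';\bQ_\omega)$ having finite kernel is used, so the truncation level $2n$ is exactly what is needed to retain the class $[M,\rho_M]_\bQ$), and then uses hypothesis (iii) --- that $\tau_{\le 2n}B^{\mr{fib}}_\bQ$ is a product of generalised Eilenberg--MacLane spaces --- to convert ``stabilising the map $\tau_{\le 2n}u$'' into ``stabilising a finite set of twisted cohomology classes $\sigma_k\in H^k(\tau_{\le 2n}B';\pi_k(B)\otimes\bQ)$''. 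This is precisely the form to which \cref{prop:triantafillou-haut} applies directly, giving commensurability with an arithmetic group in one step. Your Postnikov-tower induction ``carrying along a linear algebraic group'' would have to reprove something close to \cref{prop:triantafillou-haut} from scratch, and as written the crucial claim --- that stabilisers of $k$-invariants remain commensurable-up-to-finite-kernel to arithmetic groups at each stage --- is not established. Also note you would in any case need the stabiliser group $\pi_0(\hAut(u)_{\rho_M})$, not all of $\pi_0(\hAut(u))$, once the orbit issue above is fixed. I recommend restructuring (b) to follow the paper's reduction to \cref{prop:triantafillou-haut}.

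The parts of your argument analysing $B'$ (finite $\pi_1$, finitely generated higher homotopy groups, finite type, Thom isomorphism giving finitely generated homology of $\mr{MT}\theta$) and the use of \cref{cor:section-spaces-F-coconnected} for the higher homotopy groups of $\hAut(u)$ are correct and essentially the same as the paper's.
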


\begin{example}\label{ex:BO(2n)-example}Taking $\lambda$ to be the canonical factorisation
\[M\xlra{\ell_M}B\mr{O}(2n)\xlra{\mr{id}} B\mr{O}(2n),\]
we have $\BDiff^\lambda(M)_{\ell_M}\simeq \BDiff(M)$. In this case, the second and third assumptions of \cref{thm:stable-finiteness} are always satisfied, since there is a weak homotopy equivalence
\[\textstyle{B\mr{O}(2n)^{\mr{fib}}_{\bQ}\rightarrow  K\big(\bZ/2,\bQ^-[2n]\oplus \bigoplus_{1\le i\le n-1}\bQ^+[4i]\big)}\]
induced by the twisted Euler class and the Pontryagin classes. Here the superscript $\pm$ indicates whether $\bZ/2$ acts trivially or by multiplication with $-1$.
\end{example}

\begin{proof}[Proof of \cref{thm:stable-finiteness} ]
In view of \cref{thm:stable-homology} and the fact that finite groups have Hirsch length 0, it suffices to show that the homology of the path component 
\[(\Omega^\infty\mr{MT}\theta\sslash \hAut(u))_{\ell_M}\subset \Omega^\infty\mr{MT}\theta\sslash \hAut(u)\]
hit by \eqref{equ:param-PT} is degreewise finitely generated. As explained in \cite[Section 12.4.3]{GRWSurvey}, the orbit-stabiliser theorem implies that this path component is equivalent to the homotopy quotient $\Omega^\infty_{\rho_M}\mr{MT}\theta\sslash \hAut(u)_{\rho_M}$ of the path component $\Omega^\infty_{\rho_M}\mr{MT}\theta\subset \Omega^\infty\mr{MT}\theta$ induced by the manifolds $M$ together with the map $\rho_M\colon M\ra B'$ from \eqref{equ:factorisation-tangential-structure}, acted upon by the stabiliser $\hAut(u)_{\rho_M}\subset \hAut(u)$ of $[M,\rho_M]\in \pi_0(\mr{MT}\theta)$. This quotient fits into a fibration
\[\Omega^\infty_{\rho_M}\mr{MT}\theta\lra \Omega^\infty_{\rho_M}\mr{MT}\theta\sslash \hAut(u)_{\rho_M}\lra B\hAut(u)_{\rho_M},
\]
so the Serre spectral sequence shows that it suffices to prove that the fibre has degreewise finitely generated homology groups and that the base is of finite type. To see the former, it suffices to show that the homology of the Thom spectrum $\mr{MT}\theta$ is bounded below and degreewise finitely generated, for which we use the Thom isomorphism $H_k(\mr{MT}\theta;\bZ)\cong H_{k+2n}(B';\bZ_{\omega})$  involving the local system $\bZ_\omega$ induced by the vector bundle $(\lambda\circ u)^*\gamma_{2n}$ over $B'$. To see that the homology $H_{*}(B';\bZ_{\omega})$ is degreewise finitely generated, note that as $\rho_M\colon M\ra B'$ is $n$-connected, the space $B'$ is connected, has finite fundamental group, and its higher homotopy groups are finitely generated up to degree $(n-1)$ as a consequence of \cref{coro:pi1-finite-pik-fg}. In degrees $\ge n$, the group $\pi_k(B')$ agrees (up to passing to a subgroup if $k=n$) with $\pi_k(B)$ which is finitely generated by assumption. \cref{lem:fg-homology-implies-fg-homotopy-finite} thus shows that  $H_{*}(B';\bZ_{\omega})$ is indeed degreewise finitely generated. 

This leaves us with showing that $B\hAut(u)_{\rho_M}$ is of finite type for which we consider the canonical composition
\[
\pi_0(\mr{MT}\theta)\lra \pi_0(\mr{MT}\theta)\otimes\bQ\cong H_0(\mr{MT}\theta;\bQ)\cong H_{2n}(B';\bQ_\omega)
\]
where the last two isomorphisms are induced by the Hurewicz and Thom isomorphisms respectively. We observed above that $\pi_0(\mr{MT}\theta)$ is finitely generated, so the first map has finite kernel, which implies that the stabiliser $\pi_0(\hAut(u))_{\rho_M}$ of $[M,\rho_M]\in \pi_0(\mr{MT}\theta)$ has finite index in the stabiliser $\pi_0(\hAut(u))_{\rho_{M,\bQ}}$ of the image $[M,\rho_M]_\bQ\in H_{2n}(B';\bQ_\omega)$ under the above composition. In view of \cref{prop:finite-type-homotopy-groups} we may instead show that the group of path components of $\hAut(u)_{\rho_{M,\bQ}}$ is of finite type and that its higher homotopy groups are finitely generated. As $u$ is $n$-co-connected, it is in particular $2n$-co-connected, so it follows from \cref{lem:truncation-relative-mapping-space} below that we may replace $\hAut(u)_{\rho_{M,\bQ}}$ by the stabiliser
 \[
 \hAut(\tau_{\le 2n}u)_{\rho_{M,\bQ}}\subset \hAut(\tau_{\le 2n}u)
 \]
 of $[M,\rho_M]_\bQ\in H_{2n}(B';\bQ_\omega)\cong H_{2n}(\tau_{\le 2n}B';\bQ_\omega)$. We now consider the fibration sequence
\begin{equation}\label{equ:truncated-fibre-sequence}
\hAut(\tau_{\le 2n}u)\lra \hAut(\tau_{\le 2n}B')\xrightarrow{(\tau_{\le 2n}u)\circ(-)} \Map(\tau_{\le 2n}B',\tau_{\le 2n}B)
\end{equation}
with homotopy fibre taken over $\tau_{\le 2n}u$. The base and total spaces of this fibration have at all basepoints finitely generated higher homotopy groups and polycyclic-by-finite fundamental group by \cref{cor:section-spaces-F-coconnected} since $\tau_{\le 2n}B'$ as finite type as a result of \cref{cor:fg-homotopy-implies-fg-homology}, so the same holds for the fibre. Thus it only remains to show that the group $\pi_0(\hAut(\tau_{\le 2n}u)_{\rho_{M,\bQ}})$ is of finite type. As the groups $\pi_1(B)$ and $\pi_1(B')$ are finite, it is straightforward to see that this group is commensurable up to finite kernel to the analogous group $\pi_0(\hAut^{\pi_1}_*(\tau_{\le 2n}u)_{\rho_{M,\bQ}})$ of pointed homotopy automorphisms over $\tau_{\le 2n}u$ (considered as a pointed map) that induce the identity on fundamental group. Being of finite type is invariant under commensurability up to finite kernel by \cref{lem:f-infty-commensurable}, so we may restrict our attention to $\pi_0(\hAut^{\pi_1}_*(\tau_{\le 2n}u)_{\rho_{M,\bQ}})$. Using the pointed analogue of \eqref{equ:truncated-fibre-sequence}, we see that this group fits into an extension whose kernel is a quotient of the polycyclic-by-finite group $\pi_1(\Map_*(\tau_{\le 2n}B,\tau_{\le 2n}B'),\tau_{\le 2n}u)$ and whose quotient is the stabiliser
\begin{equation}\label{equ:stabiliser-homology-class-and-map}
\pi_0(\hAut^{\pi_1}_*(\tau_{\le 2n}B')_{\rho_{M,\bQ}})_{\tau_{\le 2n}u}\le \pi_0(\hAut^{\pi_1}_*(\tau_{\le 2n}B')_{\rho_{M,\bQ}})\end{equation} of $[\tau_{\le 2n}u]\in \pi_0(\Map_*(\tau_{\le 2n}B',\tau_{\le 2n}B))$ with respect to the action by precomposition. Being of finite type is preserved under extensions (see \cref{lem:f-infty}) and polycyclic-by-finite groups are of finite type (see \cref{lem:group-classes-closure}), so the claim follows once we show that the subgroup \eqref{equ:stabiliser-homology-class-and-map} is of finite type. Since $B$ has finitely generated homotopy groups, an induction over a Postnikov tower shows that the map
\[\pi_0(\Map_*(\tau_{\le 2n}B',\tau_{\le 2n}B))\lra\pi_0(\Map_*(\tau_{\le 2n}B',\tau_{\le 2n}B^{\mr{fib}}_\bQ))\]
induced by postcomposition with the fibrewise rationalisation has finite kernel, so the stabiliser of $[\tau_{\le 2n}u]\in \pi_0(\Map_*(\tau_{\le 2n}B',\tau_{\le 2n}B))$ has finite index in the stabiliser of the image in $\pi_0(\Map_*(\tau_{\le 2n}B',\tau_{\le 2n}B^{\mr{fib}}_\bQ))$. As $\tau_{\le 2n}B^{\mr{fib}}_\bQ$ is weakly equivalent to a generalised Eilenberg-MacLane spaces, we see that the subgroup \[\pi_0(\hAut^{\pi_1}_*(\tau_{\le 2n}B')_{\rho_{M,\bQ}})_{\tau_{\le 2n}u}\le  \pi_0(\hAut^{\pi_1}_*(\tau_{\le 2n}B'))\] has finite index in the subgroup given as the common stabiliser of $[M,\rho_M]_\bQ\in H_{2n}(B';\bQ_\omega)$ and the classes $\sigma_k\in H^k(\tau_{\le 2n}B';\pi_k(B)\otimes \bQ)$ for $2\le k\le 2n$ induced by $\tau_{\le 2n}u$, where $\pi_1(\tau_{\le 2n}B')$ acts on $\pi_k(B)$ via $\pi_1(\tau_{\le 2n}u)$. This exhibits the claim as a consequence of \cref{prop:triantafillou-haut}.
\end{proof}

We finish this section with the lemma promised in the previous proof. Given CW complexes $X$ and $X'$ and maps $p\colon X\ra B$ and $p'\colon X'\ra B$, we denote by $\mr{Map}^B(X,X')$ the space of pairs of a map $f\colon X\ra X'$ and a homotopy from $p$ to $p'\circ f$, in the compact-open topology.
\begin{lemma}\label{lem:truncation-relative-mapping-space}
If $p\colon X\ra B$ is $n$-co-connected, then the map induced by $n$-truncation
\[\Map^B(X,X)\lra \Map^{\tau_{\le n}B}(\tau_{\le n}X,\tau_{\le n}X)\]
is a weak homotopy equivalence.
\end{lemma}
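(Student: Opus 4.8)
The plan is to prove the weak equivalence by analysing the map on homotopy fibres of the two evaluation fibrations and showing that $n$-truncation induces an equivalence there, then concluding by the five lemma (or more precisely, by comparing long exact sequences of homotopy groups). Concretely, note that $\Map^B(X,X)$ sits in a fibration sequence $\Map^B(X,X)\to \Map(X,X)\to \Map(X,B)$ with fibre taken over the class of $p$, and similarly for the truncated version. First I would observe that if $p\colon X\to B$ is $n$-co-connected, then so is $\tau_{\le n}p\colon \tau_{\le n}X\to \tau_{\le n}B$, and that the square relating $X\to B$ to $\tau_{\le n}X\to \tau_{\le n}B$ (via the truncation maps) is homotopy cartesian in the relevant range.

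The cleanest route is to fix a map $f\colon X\to X$ together with a homotopy $p\simeq p\circ f$, giving a point of $\Map^B(X,X)$, and to identify the homotopy groups of $\Map^B(X,X)$ at this basepoint with the cohomology of $X$ with coefficients in the homotopy groups of the fibre $F$ of $p$: since $p$ is $n$-co-connected, $F$ has homotopy concentrated in degrees $\le n$, so $\pi_i(\Map^B(X,X),f)$ depends only on $\tau_{\le n}X$ via an obstruction-theoretic spectral sequence whose $E_2$-page is $H^{s}(X;\pi_{t}(F))$. Because $\tau_{\le n}\colon X\to \tau_{\le n}X$ is $(n{+}1)$-connected it induces isomorphisms on $H^s(-;\pi_t(F))$ for all $s$ when $\pi_t(F)=0$ for $t>n$ — indeed $H^s(X;L)\cong H^s(\tau_{\le n}X;L)$ for any local system $L$ pulled back from $\tau_{\le n}X$ once we note the relative cells of $X\to \tau_{\le n}X$ are in dimensions $\ge n+2$, which only affects $H^s$ for $s\ge n+1$, and those are killed by $\pi_t(F)=0$. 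Hence the comparison map induces isomorphisms on all homotopy groups at every basepoint, and surjectivity on $\pi_0$ follows from the same obstruction theory (any $f'\colon \tau_{\le n}X\to\tau_{\le n}X$ over $\tau_{\le n}B$ lifts, since the obstructions live in $H^{s+1}(X;\pi_s(F))\cong H^{s+1}(\tau_{\le n}X;\pi_s(F))$ which already vanish upstairs).

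A slightly more structural phrasing I might prefer: both $\Map^B(X,X)$ and $\Map^{\tau_{\le n}B}(\tau_{\le n}X,\tau_{\le n}X)$ are, as a consequence of $n$-co-connectivity of $p$, section spaces of the form $\mr{Sect}(E\to X)$ and $\mr{Sect}(E'\to \tau_{\le n}X)$ where $E\to X$ is the fibrewise-$X$ bundle associated to $p$ (fibre $F$ with $F\simeq \tau_{\le n}F$) and $E'$ is its analogue over $\tau_{\le n}X$; the square $E\to E'$, $X\to \tau_{\le n}X$ is homotopy cartesian. Then the claim reduces to: for a fibration $q\colon E'\to Y'$ with $n$-truncated fibre and $c\colon Y\to Y'$ an $(n{+}1)$-connected map, pullback induces a weak equivalence $\mr{Sect}(c^*q)\to \mr{Sect}(q)$. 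This is standard obstruction theory: one filters $Y'$ by skeleta and compares, the only cells that matter are of dimension $\le n$ since higher cells contribute nothing to either side, and $c$ is an iso on those skeleta up to homotopy.

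The main obstacle is purely bookkeeping: making the basepoint-dependence uniform (the statement is about a weak equivalence, so one must check it at every component and every basepoint, including components of $\Map^{\tau_{\le n}B}(\tau_{\le n}X,\tau_{\le n}X)$ not obviously hit) and correctly tracking the local coefficient systems $\pi_t(F)$, which are twisted by $\pi_1(X)$ and must be shown to be pulled back from $\pi_1(\tau_{\le n}X)=\pi_1(X)$. Neither is genuinely difficult, but phrasing it so the obstruction-theory input is clean — e.g.\ citing \cref{cor:section-spaces-F-coconnected}-style reasoning or the relative Postnikov comparison — is where the care goes. I expect no serious mathematical difficulty; the proof is short once the section-space reformulation is in place.
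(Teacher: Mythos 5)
Your ingredients are right (obstruction theory, the homotopy cartesian square relating $p$ and $\tau_{\le n}p$, thinking in terms of section spaces), but the proof as written has a direction problem that is not bookkeeping and cannot be fixed without introducing an intermediate space. Your opening plan — compare the fibration sequences $\Map^B(X,X)\to\Map(X,X)\to\Map(X,B)$ and its truncated analogue and conclude by the five lemma — does not work, because the maps $\Map(X,X)\to\Map(\tau_{\le n}X,\tau_{\le n}X)$ and $\Map(X,B)\to\Map(\tau_{\le n}X,\tau_{\le n}B)$ induced by the truncation functor are in general very far from weak equivalences, so comparing long exact sequences gives you nothing. The same problem reappears in your structural reformulation. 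The two sides of the lemma become section spaces over the \emph{non-equivalent} bases $X$ and $\tau_{\le n}X$. The restriction map $\mr{Sect}(q)\to\mr{Sect}(\tau^*q)$ along the $(n{+}1)$-connected map $\tau\colon X\to\tau_{\le n}X$ is indeed a weak equivalence when the fibre of $q$ is $n$-truncated; but this restriction goes from the truncated side to the untruncated side, while the map in the lemma goes the other way and is simply not a restriction map — it is induced by applying $\tau_{\le n}$, which truncates total space and base simultaneously. Knowing that the two spaces are abstractly weakly equivalent does not yet tell you that \emph{this} map is an equivalence, and your attempt to see it via a Federer-type spectral sequence quietly uses a spectral-sequence comparison whose $E_2$-map goes $H^s(\tau_{\le n}X;-)\to H^s(X;-)$, again the wrong direction for the lemma's map.

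The paper's fix is a factorisation through the hybrid space $\Map^{\tau_{\le n}B}(X,\tau_{\le n}X)$: both $\tau\circ(-)\colon\Map^B(X,X)\to\Map^{\tau_{\le n}B}(X,\tau_{\le n}X)$ and $(-)\circ\tau\colon\Map^{\tau_{\le n}B}(\tau_{\le n}X,\tau_{\le n}X)\to\Map^{\tau_{\le n}B}(X,\tau_{\le n}X)$ form a commuting triangle with the lemma's map. The second diagonal is the genuine precomposition/restriction map, and it is where your obstruction theory applies cleanly (it is the map on horizontal homotopy fibres of a square whose vertical arrows $(-)\circ\tau$ into $n$-truncated targets are weak equivalences because $\tau$ is $(n{+}1)$-connected). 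The first diagonal is a weak equivalence because, as you noted, the square formed by $X\to\tau_{\le n}X$ and $B\to\tau_{\le n}B$ is homotopy cartesian when $p$ is $n$-co-connected, so postcomposition with $\tau$ induces an equivalence on the relevant homotopy fibres. Once you insert this triangle, your arguments slot in; without it, the direction mismatch is a genuine gap.
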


\begin{proof}
Denoting the $n$-truncation by $\tau\colon X\ra \tau_{\le n}X$, we consider the commutative diagram
\begin{center}
\begin{tikzcd}[column sep=0.2cm]
\Map^B(X,X)\arrow[rr]\arrow[dr,"\tau\circ(-)",swap]&&\Map^{\tau_{\le n}B}(\tau_{\le n}X,\tau_{\le n}X)\arrow[dl,"(-)\circ\tau"]\\[-4pt]
&\Map^{\tau_{\le n}B}(X,\tau_{\le n}X)&.
\end{tikzcd}
\end{center}
The claim will follow by showing that the diagonal maps are weak equivalences. The right diagonal map agrees with the induced map on horizontal homotopy fibres of the square
 \begin{center}
\begin{tikzcd}[column sep=1.5cm]
\Map(\tau_{\le n}X,\tau_{\le n}X)\rar{(\tau_{\le n}p) \circ(-)}\arrow[d,swap,"(-)\circ\tau"]&\Map(\tau_{\le n}X,\tau_{\le n}B)\dar{(-)\circ\tau}\\[-4pt]
\Map(X,\tau_{\le n}X)\rar{(\tau_{\le n}p) \circ(-)}&\Map(X,\tau_{\le n}B)
\end{tikzcd}
\end{center}
whose vertical maps are weak equivalences by obstruction theory, so the right diagonal map is a weak equivalence as well. That the left diagonal arrow is a weak equivalence follows from the fact that the square
\begin{center}
\begin{tikzcd}
X\rar\arrow[d,"p",swap]&\tau_{\le n}X\dar{\tau_{\le n}p}\dar\\[-4pt]
B \rar&\tau_{\le n}B
\end{tikzcd}
\end{center}
is homotopy cartesian because $p$ is $n$-co-connected.
\end{proof}

\section{Finiteness properties of embedding spaces}\label{sec:emb-calc}
Our proof of \cref{thm:fg pi} involves various comparisons between spaces of diffeomorphisms and spaces of self-embeddings. In this section we focus on proving finiteness properties of spaces of embeddings by use of  \emph{embedding calculus}, as developed by Goodwillie, Klein, and Weiss \cite{Weiss,WeissErratum,GoodwillieWeiss,GoodwillieKlein}. These results might be useful in other situations, so we phrase them in more generality than needed for the proof of \cref{thm:fg pi}.

\subsection{Triad embeddings}\label{section:triads} We call a submanifold $N\subset M$ of a manifold $M$ a \emph{compact triad-pair} if $N$ and $M$ are compact and $N$ comes with a decomposition $\partial N= \partial_0N \cup \partial_1N$ of its boundary into (possibly empty or disconnected) compact submanifolds $\partial_0 N$ and $\partial_1 N$ that meet in a corner such that $\partial_0N\times [0,1)=(\partial M\times [0,1))\cap N$ for a choice of collar $\partial M\times [0,1]\subset M$. Given a compact triad-pair $N\subset M$, we denote by $\Emb_{\partial_0}(N,M)$ the space of embeddings $e\colon N\hookrightarrow M$ that agree with the inclusion $N\subset M$ on a neighborhood of $\partial_0N$, in the smooth topology. 

\begin{remark}Embedding calculus as recalled below is usually stated for triads with $\partial_1N=\varnothing$, but the general case can easily be deduced from this, as explained in \cite[Remark 1.2]{KKSurfaces}.
\end{remark}

\subsection{Recollection of embedding calculus}\label{sec:emb-calc-recollection}

Let $N \subset M$ be a compact triad-pair with $\dim(M) = d \geq 3$. Weiss' \emph{Embedding calculus} \cite{Weiss} provides a tower of approximations
\[\cdots \lra T_3 \Emb_{\partial_0}(N,M) \lra T_2 \Emb_{\partial_0}(N,M) \lra T_1 \Emb_{\partial_0}(N,M)\]
under $\Emb_{\partial_0}(N,M)$, such that the map
\begin{equation}\label{equ:stages}\Emb_{\partial_0}(N,M) \lra T_r \Emb_{\partial_0}(N,M)\end{equation}
is $(-(h-1)+r(d-2-h))$-connected where $h$ is the \emph{relative handle dimension} of $(N,\partial_0N)$, i.e.\,the minimum over the maximal indices of handle decompositions of $N$ relative to $\partial_0N$ (see \cite[Corollary 2.5]{GoodwillieWeiss}). The space $T_1 \Emb_{\partial_0}(N,M)$ is weakly homotopy equivalent to the space $\mr{Bun}_{\partial_0}(TN,TM)$ of bundle monomorphisms that extend the identity on $TN|_{\partial_0 N}$, and under this identification \eqref{equ:stages} can be identified with the derivative (cf.\,\cite[Example 10.3]{Weiss}).

Writing $\iota \colon N \hookrightarrow M$ for the inclusion, the homotopy fibres (also called the \emph{layers})
\begin{equation}\label{equ:layers}\hofib_{T_{r-1}(\iota)}(T_r \Emb(N,M) \lra T_{r-1} \Emb(N,M))\end{equation}
admit an explicit description in terms of configuration spaces for $r\ge2$. Details are given in \cite[Section 3.3.2]{KRWAlgebraic}---following \cite{Weiss,WeissErratum}---in the special case of manifolds with boundary and embedding of codimension zero, but this can be easily generalised. It suffices for us to know that this homotopy fibre is given by a relative section space 
\[\mr{Sect}_{\partial_0 C_r[N])}(\iota^* Z_r(M) \to C_r[N])\]
of a fibration over a pair $(C_r[N],\partial_0 C_r[N])$ 
that is homotopy equivalent to a finite CW pair.
Roughly, $C_r[N]$ is obtained from the configuration space of $r$ unordered points in $N$ by allowing particles to be infinitesimally close, and $\partial_0 C_r[N]$ consists of configurations where either at least two points are infinitesimally close or at least one lies in $\partial_0 N$. The fibre of this fibration agrees with the total homotopy fibre of the cubical diagram (see \cite[Def.\ 5.5.1]{MunsonVolic} for a definition)
\begin{equation}\label{eqn:tohofib} \underline{r} \coloneqq \{1,\ldots,r\} \supset I \longmapsto \Emb(\underline{r} \setminus I,M).\end{equation}

\subsection{Finiteness properties through embedding calculus}Assuming the handle dimension of $(N,\partial_0N)$ is so that the connectivity range of \eqref{equ:stages} increases with $r$, we may prove finiteness properties of $\Emb_{\partial_0}(N,M)$ by separately considering $T_1\Emb_{\partial_0}(N,M)$ and all layers \eqref{equ:layers}. This is carried out in \cref{prop:induction-tower} below, after two preliminary lemmas.

\begin{lemma}\label{lem:bundle-maps} Let $N\subset M$ be a compact triad-pair such that at all basepoints $\pi_k(M)$ is finitely generated for $k \geq 2$ and polycyclic-by-finite for $k=1$. Then at all basepoints $\pi_k(\mr{Bun}_{\partial_0}(TN,TM))$ is finitely generated for $k \geq 2$ and polycyclic-by-finite for $k = 1$.
\end{lemma}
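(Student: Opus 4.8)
The plan is to analyze the space $\mr{Bun}_{\partial_0}(TN,TM)$ of bundle monomorphisms $TN\hookrightarrow TM$ extending the identity over $TN|_{\partial_0 N}$ as a relative section space and apply the finiteness results for section spaces from \cref{sec:finiteness-spaces}. First I would observe that a bundle monomorphism $TN\hookrightarrow TM$ covering a map $f\colon N\ra M$ is the same data as a section of a fibration over $N$ whose fibre over $x\in N$ is the space of linear injections $T_xN\hookrightarrow T_{f(x)}M$; more precisely, pulling back $TM$ along the evaluation/projection, one writes $\mr{Bun}_{\partial_0}(TN,TM)\simeq \mr{Sect}_{\partial_0 N}(E\ra N)$ for an explicit fibration $E\ra N$ whose fibre $F$ is (a union of path components of, indexed by homotopy classes of maps $N\ra M$) the homotopy fibre of $\mr{Mono}(\bR^{\dim N},TM)\ra M$, i.e.\ a bundle over $M$ with fibre the Stiefel-type space $\mr{Mono}(\bR^{\dim N},\bR^d)\simeq V_{\dim N}(\bR^d)$ of linear injections. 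Thus $F$ sits in a fibration $V_{\dim N}(\bR^d)\ra F\ra M$.

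The key point is then to control the homotopy groups of this fibre $F$. Using the long exact sequence of $V_{\dim N}(\bR^d)\ra F\ra M$, together with the fact that the Stiefel manifold $V_{\dim N}(\bR^d)$ has finitely generated homotopy groups in every degree (it is a compact manifold, so apply \cref{coro:pi1-finite-pik-fg} after noting it is simply connected when $d-\dim N\ge 2$, and handle the low-codimension cases by hand using that $V_j(\bR^d)$ is $(d-j-1)$-connected with $\pi_{d-j}$ finitely generated), and the hypothesis that $\pi_k(M)$ is finitely generated for $k\ge2$ and polycyclic-by-finite for $k=1$, one concludes that $\pi_k(F)$ is finitely generated for $k\ge2$ and polycyclic-by-finite for $k=1$: the $\pi_1$ statement follows because $\pi_1(F)$ is an extension of a subgroup of $\pi_1(M)$ by a quotient of $\pi_1(V_{\dim N}(\bR^d))$ (which is trivial or $\bZ/2$), and polycyclic-by-finite groups are closed under extensions, subgroups and quotients by \cref{lem:group-classes-closure}. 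Finally, since $(N,\partial_0 N)$ is a compact triad-pair it is homotopy equivalent to a finite CW pair, so \cref{lem:section-spaces-finite-CW}, applied with $m$ ranging over all $m\ge 1$ (part (ii) for $\pi_1$, part (iii) for $\pi_m$, $m\ge2$), gives that $\pi_1(\mr{Bun}_{\partial_0}(TN,TM))$ is polycyclic-by-finite and $\pi_m$ is finitely generated for $m\ge2$ at all basepoints, as desired.

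The main obstacle I anticipate is purely bookkeeping rather than conceptual: making precise the identification of $\mr{Bun}_{\partial_0}(TN,TM)$ with a relative section space of a fibration with the stated fibre (including dealing with the basepoint-dependence, i.e.\ the various path components of $F$ corresponding to different homotopy classes $N\ra M$, and checking the hypotheses of \cref{lem:section-spaces-finite-CW} hold uniformly over basepoints), and carefully verifying the finite generation of $\pi_*(V_{\dim N}(\bR^d))$ in the edge cases of small codimension where the Stiefel manifold is not simply connected. Both are routine. One mild subtlety worth flagging: \cref{lem:section-spaces-finite-CW} requires the relevant homotopy groups of the fibre to be finitely generated only in a \emph{range} depending on the relative dimension $r$ of $(N,\partial_0 N)$, but since here $\pi_k(F)$ is finitely generated (resp.\ polycyclic-by-finite) in \emph{all} degrees, the hypotheses are satisfied for every $m$, and we get the conclusion in all degrees without any dimension restriction.
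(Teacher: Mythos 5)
Your proof is correct and uses the same ingredients as the paper's: reduce to relative section spaces over the finite CW pair $(N,\partial_0 N)$, control the fibre via Stiefel manifolds, and apply \cref{lem:section-spaces-finite-CW} together with the closure properties from \cref{lem:group-classes-closure}. The difference is purely organizational: the paper first takes the fibre sequence $\mr{Sect}_{\partial_0}(\mr{Lin}(TN,f^*TM) \to N) \to \mr{Bun}_{\partial_0}(TN,TM) \to \mr{Map}_{\partial_0}(N,M)$ and applies the section-space lemma separately to the base (viewed as $\mr{Sect}_{\partial_0}(N\times M\to N)$ with fibre $M$) and to the fibre (a section space with fibre $\mr{GL}_d(\bR)/\mr{GL}_k(\bR)$, $k=\mr{codim}(N\subset M)$), then uses the long exact sequence of the fibration; you instead write $\mr{Bun}_{\partial_0}(TN,TM)$ in one step as a section space over $N$ whose fibre $F\simeq\mr{Mono}(\bR^{\dim N},TM)$ itself fibres over $M$ with Stiefel fibre. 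Both work, and yours simply packs the two invocations of \cref{lem:section-spaces-finite-CW} into one at the cost of a two-layer fibre. One small wording slip worth fixing: your parenthetical describing $F$ as \emph{``(a union of path components\ldots, indexed by homotopy classes of maps $N\to M$) the homotopy fibre of $\mr{Mono}(\bR^{\dim N},TM)\to M$''} is garbled --- $F$ is the total space $\mr{Mono}(\bR^{\dim N},TM)$, not its homotopy fibre, and it is the \emph{section space}, not $F$, whose path components map to $\pi_0\mr{Map}_{\partial_0}(N,M)$. Your displayed fibration $V_{\dim N}(\bR^d)\to F\to M$ is nonetheless the right one, the closure-under-extensions argument for $\pi_1(F)$ and the long-exact-sequence argument for $\pi_{\geq 2}(F)$ are correct, and the rest goes through.
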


\begin{proof}There is a fibre sequence
	\[\mr{Sect}_{\partial_0}(\mr{Lin}(TN,f^*TM) \to N) \lra \mr{Bun}_{\partial_0}(TN,TM) \lra \mr{Map}_{\partial_0}(N,M)\]
	with fibre taken over a map $f\in  \mr{Map}_{\partial_0}(N,M)$. Here $\mr{Lin}(TN,f^*TM) \to N$ is the bundle over $N$ with fibre over $n \in N$ given by the space of linear injections $T_n N \to T_{f(n)}M$, which is homotopy equivalent to $\mr{GL}_d(\bR)/\mr{GL}_{k}(\bR)$ where $k=\mr{codim}(N\subset M)$. The derivative of the inclusion $\partial_0N\subset M$ induces a section over $\partial_0N$. \cref{lem:section-spaces-finite-CW} now shows that base and fibre of this fibration have polycyclic-by-finite fundamental groups and finitely generated higher homotopy groups, so the same holds for the total space, as claimed.
\end{proof}

\begin{lemma}\label{lem:tohofib} Let $M$ be a compact smooth manifold of dimension $d \geq 3$ with $\pi_1(M)$ finite at all basepoints. Then the $k$th homotopy group of the total homotopy fibre of \eqref{eqn:tohofib} is finitely generated for $k \geq 2$ and finite for $k = 1$.\end{lemma}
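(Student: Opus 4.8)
The total homotopy fibre of the cubical diagram \eqref{eqn:tohofib} sits over the vertex $\emptyset \mapsto \Emb(\underline{r}, M)$, i.e.\ the ordered configuration space $F_r(M) = \Emb(\underline{r}, M)$ of $r$ distinct points in $M$. The plan is to induct on $r$, peeling off one point at a time via the Fadell--Neuwirth fibrations $F_r(M) \to F_{r-1}(M)$ with fibre $M$ minus $r-1$ points, and track what this does to the total homotopy fibre.

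First I would recall that taking the total homotopy fibre of an $r$-cube can be computed iteratively: regard the $r$-cube $I \mapsto \Emb(\underline r \setminus I, M)$ as a map of $(r-1)$-cubes indexed by whether $r \in I$, so its total homotopy fibre is the homotopy fibre of the induced map on total homotopy fibres of the two faces. The face with $r \notin I$ is the $(r-1)$-cube for $M$ itself (on the point set $\underline{r-1}$), and the face with $r \in I$ is, after reindexing, the analogous $(r-1)$-cube built from the spaces $\Emb(\underline{r-1}\setminus J, M \setminus \{r\text{-th point}\})$, fibred over the position of the last point. Thus the total homotopy fibre of \eqref{eqn:tohofib} is itself the homotopy fibre of a map relating the order-$(r-1)$ constructions for $M$ and for punctured copies of $M$.

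A cleaner route, which I would actually carry out: the total homotopy fibre of a strongly cocartesian-type configuration cube of this shape is known (see the description in \cite{Weiss, WeissErratum} and \cite[Section 3.3.2]{KRWAlgebraic}) to be highly connected and to have the homotopy type built out of the configuration space $F_r(M)$ and iterated loop spaces of $M$ and its punctures; concretely one shows by downward induction on $|I|$ that each partial total fibre is, up to the relevant connectivity, a finite iterated loop space of a space assembled from $F_j(M)$'s. So it suffices to know: (a) $F_j(M)$ has finite $\pi_1$ at every basepoint and finitely generated $\pi_k$ for $k \ge 2$, for all $j$; and (b) these properties are preserved under the finite homotopy limits (fibre sequences, loop spaces) occurring in the iterated total-fibre computation. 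For (a), $F_j(M)$ is a manifold of dimension $jd$, open but of the homotopy type of a finite CW complex, and the Fadell--Neuwirth fibration $F_j(M) \to F_{j-1}(M)$ with fibre $M$ punctured at $j-1$ points gives, by induction and the long exact sequence, that $\pi_1(F_j(M))$ surjects onto $\pi_1(F_{j-1}(M))$ with kernel a quotient of $\pi_1(M \setminus \{j-1 \text{ pts}\})$; since $d \ge 3$ the fibre inclusion is $\pi_1$-surjective is not automatic, but $M$ minus finitely many points still has finite $\pi_1$ when $d\ge 3$ because removing a point of codimension $\ge 3$ does not change $\pi_1$ — here one uses $d \ge 3$ essentially. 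Hence $\pi_1(F_j(M))$ is finite, and then $F_j(M)$ is a connected space with finite $\pi_1$ of the homotopy type of a finite CW complex, so \cref{coro:pi1-finite-pik-fg} gives finitely generated $\pi_k$ for all $k \ge 1$. For (b), finitely generated higher homotopy groups and finite $\pi_1$ pass along fibre sequences with finite-$\pi_1$ base and fibre by the long exact sequence together with \cref{lem:group-classes-closure} (finite groups are closed under extensions), and they pass to loop spaces trivially since $\pi_k(\Omega X) = \pi_{k+1}(X)$ and looping kills $\pi_0,\pi_1$ concerns.

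The step I expect to be the main obstacle is organising the iterated total-homotopy-fibre computation cleanly enough that at each stage the relevant space is visibly built by \emph{finitely many} fibre-sequence and loop-space operations from configuration spaces of $M$ (and its punctures) — the cube has $2^r$ vertices, so one must be careful that no infinite process sneaks in, and one must handle the $\pi_1$ bookkeeping (the lemma only claims finiteness of $\pi_1$ of the total fibre, not of every intermediate space, so I should either verify finiteness of $\pi_1$ at every stage, which holds here, or argue that the final total fibre is simply connected in the range that matters). The cleanest resolution is to note that the total homotopy fibre of \eqref{eqn:tohofib} is, after finitely many fibrewise steps, weakly equivalent to a section space / iterated loop space over a finite complex with fibre an iterated loop space of a wedge-type space whose homotopy groups we have just controlled, and then invoke \cref{lem:section-spaces-finite-CW}; alternatively, directly cite the model in \cite[Section 3.3.2]{KRWAlgebraic} which already expresses this total homotopy fibre via configuration spaces, reducing the lemma to statement (a) above. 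I would take the latter, more economical route in the write-up.
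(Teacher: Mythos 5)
Your proposal follows essentially the same route as the paper. The paper also computes the total homotopy fibre iteratively (citing \cite[Proposition 5.5.4]{MunsonVolic}), thereby reducing the claim to the statement that every vertex space $\Emb(\underline{s},M)$ has finite $\pi_1$ and finitely generated $\pi_k$ for $k\ge 2$; it then proves this by induction on $s$ via the Fadell--Neuwirth fibration, exactly as you do, with the key input that removing finitely many points from $M$ leaves $\pi_1$ unchanged because $d\ge 3$. The only cosmetic difference is how one controls the fibre: the paper applies \cref{lem:fg-homology-implies-fg-homotopy-finite} to $M\setminus\{s-1\text{ points}\}$, showing its homology is degreewise finitely generated via Mayer--Vietoris, whereas you invoke \cref{coro:pi1-finite-pik-fg} on the whole configuration space $F_j(M)$ after asserting it has the homotopy type of a finite CW complex. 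Both work, though the paper's version is a bit more elementary since it avoids having to justify the finite-CW claim for the open manifold $F_j(M)$.

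One point you are right to flag, and which I would make explicit in a write-up: the iterated-fibre reduction by itself only shows that $\pi_1$ of the total fibre is built from the finite groups $\pi_1(\Emb(\underline{s},M))$ and the finitely generated abelian groups $\pi_2(\Emb(\underline{s},M))$ by extensions, which a priori gives a polycyclic-by-finite group rather than a finite one. To get actual finiteness of $\pi_1$ one should either note that the total homotopy fibre is $(r-1)(d-2)$-connected by Goodwillie's higher Blakers--Massey theorem (so simply connected for $r\ge 2$, $d\ge 3$), or observe that the application in \cref{prop:induction-tower} only uses that $\pi_1$ is polycyclic-by-finite, which the long-exact-sequence argument does deliver. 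The paper's proof has the same elision, so this is not a defect of your proposal relative to the paper's; it is just worth tightening.
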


\begin{proof}Since total homotopy fibres can be computed iteratively \cite[Proposition 5.5.4]{MunsonVolic}, it suffices to prove that for all $s \leq r$, $\pi_k(\mr{Emb}(\ul{s},M))$ is finitely generated for $k \geq 2$ and finite for $k=1$. This will follow by induction over $s$ from the fibre sequence 
\[M \setminus \{s-1 \text{ points}\} \lra \Emb(\underline{s},M) \lra \Emb(\underline{s-1},M)\]
once we prove that $\pi_k(M \setminus \{s-1 \text{ points}\})$ is finitely generated for $k \geq 2$ and finite for $k=1$. This follows from Lemma \ref{lem:fg-homology-implies-fg-homotopy-finite}, using the fact that the manifold $M \setminus \{s-1 \text{ points}\}$ has the same fundamental group as $M$ (using transversality and $d \geq 3$) and that its homology is finitely generated in each degree (using the Mayer--Vietoris sequence).
\end{proof}

\begin{remark}For a connected $d$-manifold $M$ with $\pi_{d-1}(M)$ and $\pi_d(M)$ finitely generated and $d\ge3$,  $\pi_{d-1}(\Emb(\underline{2},M))$ is finitely generated \emph{if and only if} $\pi_1(M)$ is finite \cite[Thm 2]{Hansen}.\end{remark}

\begin{proposition}\label{prop:induction-tower}
		Let $N\subset M$ be a compact triad-pair. If $\partial_0 N\subset N$ has relative handle dimension at most $d-3$ and at all basepoints $\pi_1(M)$ is finite, then at all basepoints $\pi_k(\Emb_{\partial_0}(N,M))$ is finitely generated for $k \geq 2$ and polycyclic-by-finite for $k=1$.
\end{proposition}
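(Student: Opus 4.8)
The plan is to induct up the embedding calculus tower of \cref{sec:emb-calc-recollection}, using the handle-dimension hypothesis to make the tower converge on homotopy groups and the two preceding lemmas to control its layers. We may assume $d\ge3$ (embedding calculus as recalled requires this, and the hypothesis on the handle dimension is otherwise essentially vacuous), reduce to the case $\partial_1 N=\varnothing$ as in \cite[Remark 1.2]{KKSurfaces}, and---since all assertions are at all basepoints---argue at one basepoint at a time, so in particular we may take $M$ connected. As $\partial_0 N\subset N$ has relative handle dimension $h\le d-3$, we have $d-2-h\ge1$, so the connectivity $-(h-1)+r(d-2-h)$ of the map \eqref{equ:stages} tends to $\infty$ with $r$. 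Hence for each $k$ there is an $r$ for which \eqref{equ:stages} is $(k+1)$-connected and thus induces an isomorphism on $\pi_k$; it therefore suffices to prove that for every $r\ge1$ the space $T_r\coloneqq T_r\Emb_{\partial_0}(N,M)$ has finitely generated $\pi_k$ for $k\ge2$ and polycyclic-by-finite $\pi_1$, which I would do by induction on $r$.

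For $r=1$ we have $T_1\simeq\mr{Bun}_{\partial_0}(TN,TM)$. As $M$ is a compact manifold it has the homotopy type of a finite CW complex, so \cref{coro:pi1-finite-pik-fg} (applicable since $\pi_1(M)$ is finite) shows that $\pi_k(M)$ is finitely generated for all $k\ge1$; in particular $\pi_k(M)$ is finitely generated for $k\ge2$ and polycyclic-by-finite for $k=1$, so \cref{lem:bundle-maps} yields the case $r=1$. For the inductive step ($r\ge2$), I would feed the fibration sequence \eqref{equ:layers} with base $T_{r-1}$, total space $T_r$, and fibre the $r$-th layer into the long exact sequence of homotopy groups. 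As recalled in \cref{sec:emb-calc-recollection}, this layer is a relative section space $\mr{Sect}_{\partial_0 C_r[N]}(\iota^*Z_r(M)\to C_r[N])$ over a pair that is homotopy equivalent to a finite CW pair and whose fibre is the total homotopy fibre of the cube \eqref{eqn:tohofib}. By \cref{lem:tohofib} this total homotopy fibre has finitely generated $\pi_k$ for $k\ge2$ and finite---hence polycyclic-by-finite---$\pi_1$, so parts (ii) and (iii) of \cref{lem:section-spaces-finite-CW}, together with the remark following it to accommodate a possibly disconnected $C_r[N]$, show that the layer has finitely generated $\pi_k$ for $k\ge2$ and polycyclic-by-finite $\pi_1$ at all basepoints.

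It then remains to read off the conclusion from the long exact sequence of the fibration at a basepoint of $T_r$. For $k\ge2$ it exhibits $\pi_k(T_r)$ as an extension of a subgroup of $\pi_k(T_{r-1})$ by a quotient of the $k$-th homotopy group of the layer; all three are finitely generated abelian, by the induction hypothesis and the previous paragraph, so $\pi_k(T_r)$ is finitely generated. For $k=1$ the same sequence exhibits $\pi_1(T_r)$ as an extension of a subgroup of $\pi_1(T_{r-1})$ by a quotient of $\pi_1$ of the layer, both polycyclic-by-finite; since polycyclic-by-finite groups are closed under subgroups, quotients, and extensions by \cref{lem:group-classes-closure}, it follows that $\pi_1(T_r)$ is polycyclic-by-finite, closing the induction. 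The substantive input all lies in the tools already established---\cref{lem:bundle-maps}, \cref{lem:tohofib}, \cref{lem:section-spaces-finite-CW}, and the closure properties of \cref{sec:recollections}---and the only step where the hypotheses are really used rather than merely quoted is the convergence of the tower, where the bound $h\le d-3$ is essential in forcing $d-2-h\ge1$ and hence the stabilisation of $\pi_k$ along the tower. I expect no genuine obstacle beyond assembling these ingredients and tracking which groups are abelian (for $k\ge2$) versus only polycyclic-by-finite (for $k=1$).
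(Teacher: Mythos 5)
Your proof is correct and follows essentially the same route as the paper's: the same reduction via the connectivity of the Taylor approximations, the same base case via Lemma~\ref{lem:bundle-maps} and Corollary~\ref{coro:pi1-finite-pik-fg}, and the same inductive step using the description of the layers as section spaces controlled by Lemma~\ref{lem:tohofib} and Lemma~\ref{lem:section-spaces-finite-CW}. The only difference is that you spell out the long-exact-sequence bookkeeping and the reduction to $\partial_1 N = \varnothing$, which the paper leaves implicit.
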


\begin{proof}We fix an embedding $\iota\in \Emb_{\partial_0}(N,M)$. By the assumption on the handle dimension of $N$, the connectivity of the map $T_r\colon \Emb_{\partial_0}(N,M) \to T_r \Emb_{\partial_0}(N,M)$ tends to $\infty$ with $r$. Hence it suffices to show that $\pi_k(T_r \Emb_{\partial_0}(N,M);T_r(\iota))$ has the desired property for all $r\ge1$, which we do by induction over $r\ge1$. The base case is \cref{lem:bundle-maps} since $\pi_k(M)$ is finitely generated by \cref{coro:pi1-finite-pik-fg}. For the induction step we consider the fibration sequence
	\[\mr{Sect}_{\partial_0}\big(\iota^* Z_r(M) \to C_r[N]\big) \lra T_r \Emb_{\partial_0}(N,M) \lra T_{r-1}\Emb_{\partial_0}(N,M)\]
	with fibre taken over $T_{r-1}(\iota)$. By the induction hypothesis, the homotopy groups of the base at the basepoint $T_{r-1}(\iota)$ are finitely generated for $k\ge2$ and polycyclic-by-finite for $k=1$, so it suffices to show that the homotopy groups of the path components of the fibre satisfy the same property. As mentioned above, the pair $(C_r[N],\partial_0 C_r[N])$ is homotopy equivalent to a finite CW pair. By \cref{lem:tohofib} the homotopy groups the fibre of $\iota^* Z_r(M) \to C_r[N]$ have the desired property, so it follows from \cref{lem:section-spaces-finite-CW} that the homotopy groups of all components of the section space have this property and hence the same holds for $\pi_k(T_r \Emb_{\partial_0}(N,M);T_r(\iota))$.
\end{proof}

\begin{remark}\cref{prop:induction-tower} was certainly known to experts in embedding calculus. A variant of this result for $1$-connected manifolds was stated by Goodwillie in \cite{GoodwillieMathoverflow} and a variant for certain self-embedding spaces featured in \cite[Proposition 3.15]{Kupers}.\end{remark}

\section{Finiteness properties of diffeomorphism groups}\label{sec:proof}
The proof of \cref{thm:fg pi} is divided in two steps: in \cref{sec:pi2 finite} we prove the claim under the additional assumption that $M$ is connected and has finite second homotopy group, from which we deduce the general case in \cref{sec:proof-of-main-thm} by surgery. The first step involves a variant of a fibre sequence known as the \emph{Weiss fibre sequence}, and we explain this first.

\subsection{The Weiss fibre sequence}\label{sec:weiss}
Let $M$ be a compact smooth manifold with and $N\subset\partial M$ a compact codimension $0$ submanifold of the boundary. There is a fibre sequence of the form
\begin{equation}\label{equ:weiss-fibration-sequence}
B\Diff_{\partial}(M) \lra B\Emb^{\cong}_{\partial M\setminus\mathrm{int}(N)}(M) \lra B^2 \Diff_\partial(N\times [0,1])
\end{equation}
where $\Diff_\partial(M)$ is the group of diffeomorphisms of $M$ in the smooth topology that fix a neighborhood of the boundary pointwise and $\smash{\Emb^{\cong}_{\partial M\setminus\mathrm{int}(N)}}(M)$ is the topological monoid of self-embeddings of $M$ in the smooth topology that agree with the identity on a neighborhood of $\partial M\setminus\mathrm{int}(N)$ and are isotopic through such embeddings to a diffeomorphism of $M$ fixing $\partial M$ pointwise. The space $B^2 \Diff_\partial(N\times [0,1])$ is the delooping of $B\Diff_\partial(N\times [0,1])$ with respect to the $A_\infty$-structure induced by juxtaposition.

A non-delooped form of this fibre sequence follows from the isotopy extension theorem and featured in the special case $N=D^{d-1}$ in Weiss'  work on Pontryagin classes \cite[Remark 2.1.3]{WeissDalian}. Building on Weiss' work, the third-named author \cite{Kupers} proved finiteness results for automorphism spaces of 2-connected manifolds for which he constructed the delooped sequence \eqref{equ:weiss-fibration-sequence} for $N=D^{d-1}$ (see Section 4 loc.cit.). The general case follows in the same way.

\subsection{The case of finite second homotopy group}\label{sec:pi2 finite}
The goal of this section is to prove a weaker version of \cref{thm:fg pi}, which additionally assumes that $M$ is connected and $\pi_2(M)$ is finite. The latter is only used once, to show that a set of embeddings is finite in \cref{lem:path-compo-finite}.

\begin{theorem}\label{thm:pi2 finite}
Let $M$ be a closed connected manifold of dimension $2n\ge6$. If $\pi_1(M)$ and $\pi_2(M)$ are finite, then the groups $\pi_k(\BDiff(M))$ are finitely generated for all $k\geq 2$.
\end{theorem}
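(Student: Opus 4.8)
The plan is to reduce the statement to the finiteness results of the preceding sections by means of the Weiss fibre sequence \eqref{equ:weiss-fibration-sequence}. Concretely, I would choose a handle $H\cong D^{2n}$ embedded in $M$, set $W\coloneqq M\setminus\mr{int}(H)$, and apply \eqref{equ:weiss-fibration-sequence} with $N=D^{2n-1}\subset \partial W=S^{2n-1}$. This yields a fibre sequence
\[
B\Diff_\partial(W)\lra B\Emb^\cong_{D^{2n-1}}(W)\lra B^2\Diff_\partial(D^{2n}).
\]
Since $\Diff_\partial(D^{2n})$ has finitely generated homotopy groups in every degree---this is Kupers' theorem, which is exactly where the $2$-connectivity hypothesis gets replaced by the simpler case of a disc---the base $B^2\Diff_\partial(D^{2n})$ has finitely generated higher homotopy groups, so by the long exact sequence it is enough to treat $B\Diff_\partial(W)$ and $B\Emb^\cong_{D^{2n-1}}(W)$. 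The left-hand term is related to the original $\BDiff(M)$ by a further fibration sequence (gluing back the handle / forgetting the constraint on $H$) whose fibre involves $\Diff_\partial(D^{2n})$ and the frame bundle of $M$, both of which are under control by \cref{coro:pi1-finite-pik-fg} and Kupers' theorem; so the crux is the self-embedding term in the middle.

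To handle $B\Emb^\cong_{D^{2n-1}}(W)$, I would compare the topological monoid of self-embeddings with the full embedding space via $\Emb^\cong_{D^{2n-1}}(W)\subset \Emb_{D^{2n-1}}(W,W)$, the inclusion being onto a union of path components. The homotopy groups of $\Emb_{D^{2n-1}}(W,W)$ are accessible through embedding calculus: here $N=W$ sits in $M=W$ with $\partial_0 N=D^{2n-1}$, and the relative handle dimension of $(W,D^{2n-1})$ is at most $2n-1$, which is not quite $\le d-3=2n-3$, so embedding calculus does not converge directly for codimension-zero self-embeddings. This is the place where the hypothesis $\pi_2(M)$ finite enters: after doing surgery on $M$ (or rather arguing with a handle decomposition), one can arrange that $W$ is built from $D^{2n-1}\times[0,1]$ using handles of index $\le n$, so that the relative handle dimension is $\le n\le 2n-3$ once $2n\ge 6$. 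With convergence in hand, \cref{prop:induction-tower} applies and shows $\pi_k(\Emb_{D^{2n-1}}(W,W))$ is finitely generated for $k\ge 2$ and polycyclic-by-finite for $k=1$; here one uses that $\pi_1(W)$ is finite, which holds because $\pi_1(M)$ is finite and $W$ is obtained by removing a disc. Passing to $B\Emb^\cong$ then gives that its homotopy groups are finitely generated in degrees $\ge 2$, modulo dealing with $\pi_0$ and $\pi_1$ (the mapping-class-group-level statement), for which I would invoke \cref{prop:mcg-finfty} together with the fact that $\pi_0$ and $\pi_1$ of the relevant automorphism spaces are of finite type and have polycyclic solvable subgroups.

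A cleaner route for the middle term, which I would actually prefer, is to not separate $\Diff_\partial(W)$ and $\Emb^\cong_{D^{2n-1}}(W)$ but to feed the stable homology machinery into the picture: the Galatius--Randal-Williams/Friedrich theorem, in the form of \cref{thm:stable-finiteness}, computes $H_k(\BDiff^\lambda(M);\bZ)$ in a range growing with the genus, and by stabilising $M$ (connect-summing with copies of $S^n\times S^n$, which does not change $\pi_1$ and only enlarges $\pi_2$ in a controlled finitely-generated way) one makes this range cover any fixed $k$. Combined with \cref{lem:fg-homology-implies-fg-homotopy-finite}---legitimate because $\pi_1(M)$ is finite---this would give finite generation of $\pi_k$ of the stabilised $\BDiff$. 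The Weiss fibre sequence is then the device that transfers finiteness from the stabilised manifold back to $M$ itself, since the fibre $B\Diff_\partial(D^{2n})$ and the difference between $B\Emb^\cong$ of $M$ and of $M\# (S^n\times S^n)$ are both governed by spaces with finitely generated homotopy groups.

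The main obstacle, and the step I expect to require the most care, is the convergence of embedding calculus for the codimension-zero self-embedding space: codimension zero is precisely the borderline case $h=d$ where the naive connectivity estimate $-(h-1)+r(d-2-h)$ does not tend to infinity, so one genuinely needs the hypothesis $\pi_2(M)$ finite to produce, via surgery below the middle dimension, a model of $W=M\setminus\mr{int}(D^{2n})$ whose relative handle dimension drops to $\le 2n-3$. Getting this surgery argument right---ensuring that trading handles does not disturb the fundamental group, that the resulting manifold is diffeomorphic to the original (or that the discrepancy is absorbed by a space with finitely generated homotopy), and that the handle dimension bound is genuinely met---is the technical heart of the proof. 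Everything downstream is then an assembly of \cref{prop:induction-tower}, \cref{thm:stable-finiteness}, \cref{prop:mcg-finfty}, Kupers' finiteness theorem for $\Diff_\partial(D^{2n})$, and the long exact sequences of the relevant fibrations, using \cref{prop:finite-type-fibrations} and \cref{lem:fg-homology-implies-fg-homotopy-finite} to propagate finiteness.
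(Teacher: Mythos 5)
Your proposal hits a genuine wall at exactly the step you flag as the ``technical heart'': the claim that when $\pi_1(M)$ and $\pi_2(M)$ are finite one can arrange $W=M\setminus\mr{int}(D^{2n})$ to be built from $D^{2n-1}\times[0,1]$ using handles of index $\le n$ is false. Finite but \emph{nontrivial} $\pi_1(M)$ and $\pi_2(M)$ force $1$- and $2$-handles in any handle decomposition of $M$, and these become $(2n-1)$- and $(2n-2)$-handles in the decomposition of $W$ relative to $D^{2n-1}\times I$. Equivalently, the absolute handle dimension of $W$ is $\le 2n-3$ if and only if $\partial W=S^{2n-1}\hookrightarrow W$ is $2$-connected, which forces $\pi_1(M)=0$. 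So embedding calculus genuinely does not converge for $\Emb^\cong_{D^{2n-1}}(W)$ in your setup; it does converge for the $2$-connected case treated in \cite{Kupers}, and that is the distinction you are eliding. The hypothesis ``$\pi_2(M)$ finite'' is not there to improve handle dimensions at all: the paper uses it to ensure that $\pi_k(M_g)$ is finite for $k\le 2$ after stabilising (\cref{lem:handle-bounds}), which is what makes $\pi_0(\Emb(M^{\leq2},M_g))$ finite (\cref{lem:path-compo-finite}) and lets the Moore--Postnikov $3$-factorisation $B$ in \cref{thm:stable-finiteness} have finite fundamental group and the right rationalisation (\cref{lem:lambda-str-unique}, \cref{prop:HDiffK_g fg}).

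The structural move you are missing is the decomposition $M=M^{\leq 2}\cup M^{>2}$ into the handles of index $\le 2$ and the rest. Then $M^{\leq 2}$ has handle dimension $\le 2$, and turning the decomposition upside down shows $M^{>2}$ has \emph{absolute} handle dimension $\le 2n-3$; stabilising by $n$-handles preserves this. The Weiss fibre sequence is applied to $M_g^{>2}$ with $N=\partial M^{>2}$, so that the middle term is the unrestricted self-embedding space $\Emb^\cong(M_g^{>2})$, exactly the one for which absolute handle dimension governs convergence of embedding calculus, and the Weiss base is $B^2\Diff_\partial(\partial M^{>2}\times I)$ rather than $B^2\Diff_\partial(D^{2n})$. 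This base is more complicated than a disc, but it is simply connected and, crucially, \emph{independent of $g$}: that $g$-independence is what makes the induction on $k$ in (a)$_k$--(d)$_k$ work, feeding the stable-homology range (which grows with $g$) through two Serre spectral sequences to extract homotopy finiteness in all degrees. Your ``cleaner route'' via stable homology is the right instinct, but as stated it has two defects: \cref{lem:fg-homology-implies-fg-homotopy-finite} cannot be applied to $\BDiff(M_g)$ directly because $\pi_1(\BDiff(M_g))=\pi_0(\Diff(M_g))$ is typically infinite (it is merely of finite type), and ``the difference between $B\Emb^\cong$ of $M$ and of $M\sharp(S^n\times S^n)$'' is not a space one can point to, so the transfer from high genus back to $g=0$ needs the explicit induction architecture, with the $g$-independent and simply connected $B^2\Diff_\partial(\partial M^{>2}\times I)$ as the linchpin.

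Two smaller points. First, the self-embeddings one needs come with a $\pi_0$-issue: $\pi_0(\Emb^\cong(M^{>2}_g))$ must be shown to be of finite type (\cref{lem:A-mcg}), which does not follow from embedding calculus but requires \cref{prop:mcg-finfty} for the closed manifold $M_g$ together with the Weiss fibre sequence itself; you gesture at $\pi_0$ and $\pi_1$ but the argument is not as routine as ``invoke \cref{prop:mcg-finfty}''. Second, the restriction fibration connecting $\Diff_\partial(M_g^{>2})$ to $\Diff(M_g)$ is via restriction along the inclusion $M^{\leq 2}\hookrightarrow M_g$, with fibre $\Emb(M^{\leq 2},M_g)_{\iota_2}$ rather than the disc-embedding space; this matters because it is $\Emb(M^{\leq2},M_g)$, not $\Emb(D^{2n},M_g)$, whose finiteness of $\pi_0$ and finite type is needed to run the Serre spectral sequence implications (b)$_k\Rightarrow$(d)$_{k+1}$.
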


Given $M$ as in \cref{thm:pi2 finite}, we fix a handle decomposition of $M$ and decompose $M$ into two codimension $0$ submanifolds
\beq
M = M^{\leq 2}\cup M^{>2}
\eeq
where $M^{\leq 2}$ is the union of the handles of index $\le 2$ and $M^{>2}=M\backslash \mr{int}(M^{\leq 2})$. These submanifolds intersect in $\partial M^{\leq 2} = \partial M^{>2}$. We shall also consider the \emph{stabilised manifolds}
\beq
M^{>2}_g\coloneqq M^{>2}\sharp (S^n\times S^n)^{\sharp g},\quad\text{and}\quad M_g \coloneqq M\sharp (S^n\times S^n)^{\sharp g} \quad\text{for}\quad g\ge0,
\eeq 
where we model the first connected sum by gluing the manifold $([0,1]\times \partial M_g^{>2})\sharp (S^n\times S^n)^{\sharp g}$ to $M^{>2}$ along $\partial M^{>2}\subset M^{>2}$, so we have a canonical identification  $\partial M^{>2}_g\cong\partial M^{>2}$. The second connected sum is obtained from $M_g^{>2}$ by gluing on $M^{\le 2}$ along $\partial M^{>2}=\partial M^{\le 2}$. Fixing a choice of closed embedded disc $D^{2n}\subset \mr{int}(M^{\le 2})$, we moreover define
\[
M_{g,1} \coloneqq M_g\backslash \mr{int}(D^{2n}).
\]
so we have inclusions
\[M^{>2}\subset M_g^{>2}\subset M_{g,1}\subset M_g\quad\text{and}\quad M^{\le 2}\subset M_g.\]
Note that these definitions include the case $g=0$ where we have $M_0=M$, and $M^{>2}_0=M^{>2}$.

Our proof of \cref{thm:pi2 finite} centres around two fibre sequences
\begin{equation}\label{eq:weiss-K_g}
B\Diff_{\partial}(M_g^{>2})\lra
B\Emb^{\cong}(M_g^{>2})\lra B^2\Diff_{\partial}(\partial M^{>2}\times I)\quad\text{and}
\end{equation}
\begin{equation}\label{eq:restriction 2-sk}
\Emb(M^{\leq 2},M_g)_{\iota_2}\lra \BDiff_{\partial}(M_g^{>2})\lra \BDiff(M_g)_{\iota_2}.
\end{equation} 
The sequence \eqref{eq:weiss-K_g} is the Weiss fibre sequence \eqref{equ:weiss-fibration-sequence} for $M=M_g^{>2}$ and $N = \partial M^{>2}$. To explain the second sequence, we write $\iota_2\colon M^{\le2}\hookrightarrow M_g$ for the inclusion and denote by $\Emb(M^{\leq 2},M_g)_{\iota_2}\subset \Emb(M^{\leq 2},M_g)$ the component of $\iota_2$ and by $\Diff(M_g)_{\iota_2}\subset \Diff(M_g)$ the subgroup of the path components that stabilise $[\iota_2]\in  \pi_0(\Emb(M^{\leq 2},M_g))$ under postcomposition. With this notation in place, \eqref{eq:restriction 2-sk} is induced by extending diffeomorphisms by the identity along $M_g^{>2}\subset M_g$.

\vspace{.5em}

Using these fibre sequences, we will prove \cref{thm:pi2 finite} via an induction involving the following four statements. During the proof, we abbreviate \emph{finitely generated} by \emph{f.g.}\ and fix a manifold $M$ as in \cref{thm:pi2 finite} as well as a function $\varphi_{\pi_1(M)}\colon \bN_0\ra\bN_0$ as in \cref{thm:stable-homology}.
\begin{enumerate}
		\item[(a)$_k$] $\pi_i(B^2\Diff_\partial(\partial M^{>2} \times I))$ is f.g.\ for all $i$ with $2 \leq i \leq k+1$,
		\item[(b)$_{k}$] $\pi_i(B\Diff(M_g)_{\iota_2})$ is f.g.\ for all pairs $(i,g)$ with $1\leq i \leq k$ and $g\ge0$,
		\item[(c)$_{k}$]$H_i(B\Diff(M_g)_{\iota_2};\bZ)$ is f.g.\ for all pairs $(i,g)$ with $1\leq i \leq k+1\le\varphi_{\pi_1(M)}(g)$,
		\item[(d)$_{k}$] $H_i(B\Diff_\partial(M^{>2}_g);\bZ)$ is f.g.\ for all pairs $(i,g)$ with $1\le i \leq k\le\varphi_{\pi_1(M)}(g)$.
\end{enumerate}

\noindent The induction steps rely on the following four assertions which we justify later.
\begin{enumerate}[label=\textbf{(A\,\arabic*)}, ref={(A\,\arabic*)} ]
\item\label{A-emb-homotopy}At all basepoints, the higher homotopy groups of $\Emb(M^{\leq 2},M)$ and $\Emb^{\cong}(M_g^{>2})$ for $g \geq 0$ are f.g.\ and their fundamental groups are polycyclic-by-finite.
\item\label{A-emb-homology}The spaces $\Emb(M^{\leq 2},M)_{\iota_2}$ and $B\Emb^{\cong}(M_g^{>2})$ are of finite type for $g\ge0$. 
\item\label{A-mcg2} The group $\pi_0(\Diff(M_g)_{\iota_2})$ is of finite type for $g\ge0$.
\item\label{A-diff-homology} (b)$_{k}$ implies (c)$_{k}$.
\end{enumerate}

\begin{proof}[Proof of \cref{thm:pi2 finite} assuming \textup{\ref{A-emb-homotopy}--\ref{A-diff-homology}}]Since  \cref{thm:pi2 finite} is a statement about higher homotopy groups, we may replace $\BDiff(M)$ by the space $\BDiff(M)_{\iota_2}$. Combining this with \ref{A-emb-homotopy}, the fibre sequence \eqref{eq:restriction 2-sk} for $g=0$, and the fact that subgroups of polycyclic-by-finite groups are f.g., we see that it suffices to show that $\pi_k(\BDiff_\partial(M^{>2}))$ is f.g.\ for $k\ge2$. Combining \eqref{eq:weiss-K_g} for $g=0$ with \ref{A-emb-homotopy}, this in turn reduces it to showing that $\pi_k(B^2\Diff_{\partial}(\partial M^{>2}\times I))$ is f.g.\ for $k\ge2$; this is statement (a)$_k$ for all $k\ge0$. 

The proof now proceeds by simultaneously proving (a)$_{k}$--(d)$_{k}$ for all $k\ge0$ via an induction on $k$. The case $k=0$ holds either trivially or as a result of \ref{A-mcg2}, so the induction is completed by the following chain of implications:

\medskip

	\noindent {(b)$_{k}$ and (c)$_{k}$ $\Rightarrow$ (d)$_{k+1}$.} Consider the Serre spectral sequence of \eqref{eq:restriction 2-sk}, which has the form
	\[{}^\mr{I}E^2_{p,q} = H_p\big(B\Diff(M_g)_{\iota_2};H_q(\Emb(M^{\leq 2},M_g)_{\iota_2};\bZ)\big) \Longrightarrow H_{p+q}(B\Diff_\partial(M^{>2}_g);\bZ).\]
	By \ref{A-emb-homology}, the $\bZ[\pi_0(\Diff(M_g)_{\iota_2}]$-module $H_q(\Emb(M^{\leq 2},M_g)_{{\iota_2}};\bZ)$ is f.g.\ as an abelian group for all $q\ge0$. Therefore, using (b)$_{k}$ and \ref{A-mcg2}, an application of \cref{cor:fg-homotopy-implies-fg-homology} shows that ${}^\mr{I}E^2_{p,q}$ is f.g.\ for $p \leq k$, while (c)$_{k}$ implies that ${}^\mr{I} E^2_{k+1,0}$ is f.g.\ for $k+1\le \varphi_{\pi_1(M)}(g)$. We thus have that ${}^\mr{I} E^2_{p,q}$ is f.g.\ for $p+q\le k+1$ and $k+1\le \varphi_{\pi_1(M)}(g)$, so $H_{p+q}(B\Diff_\partial(M^{>2}_g);\bZ)$ is f.g.\ in the same range; this is (d)$_{k+1}$.
	
\medskip

	\noindent {(a)$_{k}$ and (d)$_{k+1}$ $\Rightarrow$ (a)$_{k+1}$.} 
	Pick $g\ge0$ with $k+1\le \varphi_{\pi_1(M)}(g)$ and consider the Serre spectral sequence induced by the fibre sequence \eqref{eq:weiss-K_g}
	\[{}^{\mr{II}}E^2_{p,q} = H_p\big(B^2\Diff_\partial(\partial M^{>2} \times I);H_q(B\Diff_\partial(M^{>2}_g;\bZ)\big) \Longrightarrow H_{p+q}(B\Emb^{\cong}(M^{>2}_g);\bZ).\]		
	From (a)$_k$ and \cref{cor:fg-homotopy-implies-fg-homology}, it follows that ${}^{\mr{II}}E^2_{p,0}$ is f.g.\ for $p\le k+1$. From (d)$_{k+1}$, we obtain that ${}^{\mr{II}}E^2_{0,q}$ is f.g.\ for $q\le k+1$.  As $B^2\Diff_\partial(\partial M^{>2} \times I)$ is 1-connected, the local systems involved in the $E^2$-page are trivial, so the universal coefficient theorem implies that ${}^{\mr{II}}E^2_{p,q}$ is f.g.\ for $p,q\le k+1$. Since $E^\infty_{p,q}$ is f.g.\ as a result of \ref{A-emb-homology}, we conclude that ${}^{\mr{II}}E^2_{p,0} = H_{p}(B^2\Diff_\partial(\partial M^{>2} \times I);\bZ)$ is f.g.\ for $p\le k+2$, so (a)$_{k+1}$ follows from \cref{lem:fg-homology-implies-fg-homotopy-finite}.
	
		\medskip
	
		\noindent {(a)$_{k+1}$ $\Rightarrow$ (b)$_{k+1}$.}
	Using \ref{A-emb-homotopy} and the fact that subgroups of polycyclic-by-finite groups are finitely generated, this follows from the fibre sequences \eqref{eq:weiss-K_g} and \eqref{eq:restriction 2-sk}.
	
	\medskip
	
		\noindent {(b)$_{k+1}$ $\Rightarrow$ (c)$_{k+1}$.} 
	This is \ref{A-diff-homology}.
\end{proof}

In the following four subsections, we establish the remaining assertions \ref{A-emb-homotopy}--\ref{A-diff-homology}.

\subsection*{Assertion \ref{A-emb-homotopy}: \textnormal{\textit{$\Emb(M^{\leq 2},M)$ and $\Emb(M_g^{>2})$ have f.g.\ homotopy groups and polycyclic-by-finite fundamental groups}}} 
We begin with the following auxiliary lemma.

\begin{lemma}\label{lem:handle-bounds}
Fix $g\ge0$ and let $M$ be as in \cref{thm:pi2 finite}. The following manifolds have handle dimension $\le 2n-3$:
\begin{enumerate}
	\item \label{enum:hd-mle2} $M^{\le 2}$ and
	\item \label{enum:hd-mge2} $M_g^{>2}$. 
\end{enumerate}	
Moreover, the following groups are finite:
\begin{enumerate} \setcounter{enumi}{2}
\item \label{enum:pi1-mgge2} $\pi_1(M_g^{>2})$,
\item \label{enum:pi1-mg} $\pi_1(M_g)$, and
\item \label{enum:pi2-mg} $\pi_2(M_g)$.
\end{enumerate}
\end{lemma}

\begin{proof}
By construction, the handle dimension of $M^{\le 2}$ is at most $2\le 2n-3$. Reversing the handle decomposition of $M$, we see that the handle dimension of $M^{>2}$ is at most $2n-3$ as well. The manifold $M_g^{>2}$ can obtained from $M^{>2}$ by attaching only $n$-handles, so the handle dimension of $M_g^{>2}$ is at most $2n-3$ too. This completes the proof of \ref{enum:hd-mle2} and \ref{enum:hd-mge2}. 

To prove \ref{enum:pi1-mgge2} note that since $M$ is obtained from $M^{\le2}$ by attaching handles of index at least $3$ and from $M^{>2}$ by attaching handles of index at least $2n-3\ge3$, both inclusions $M^{>2}\subset M$ and $M^{\le 2}\subset M$ are $2$-connected, so $\pi_1(M^{>2})$ and $\pi_1(M^{\le2})$ are finite since $\pi_1(M)$ is finite by assumption. This also implies finiteness of $\pi_1(M_g^{>2})$, since we already noted that $M_g^{>2}$ can obtained from $M^{>2}$ by attaching $n$-handles.

For \ref{enum:pi1-mg} and \ref{enum:pi2-mg}, we use that $M_g$ is obtained from $M_{g,1}$ by attaching a $2n$-handle to conclude that (a) $\pi_k(M_{0,1})$ is finite for $k\le2$ since this holds for $M=M_0$, and that (b) it suffices to show the result for $M_{g,1}$. Now $M_{g,1}$ is obtained from $M_{0,1}$ by attaching $n$-handles and $\pi_k(M_{0,1})$ is finite for $k\le2$, so $\pi_1(M_{g,1})$ is finite for $k\le 2$ and the proof is completed.
\end{proof}

Assertion \ref{A-emb-homotopy} now follows by applying \cref{prop:induction-tower} to $M^{\leq 2} \subset M$ and $M_g^{> 2} \subset M_g^{> 2}$. The assumptions hold by \cref{lem:handle-bounds} \ref{enum:hd-mle2} and \ref{enum:hd-mge2}, together with \ref{enum:pi1-mgge2} and \ref{enum:pi1-mg} for $g=0$.

\subsection*{Assertion \ref{A-emb-homology}: \textnormal{\textit{$\Emb(M^{\leq 2},M)_{\iota_2}$ and $B\Emb^{\cong}(M_g^{>2})$ are of finite type}}}
We first prove:

\begin{lemma}\label{lem:path-compo-finite}
		For $M$ as in \cref{thm:pi2 finite}, the set $\pi_0(\Emb(M^{\leq 2},M_g))$ is finite for $g\ge0$.
	\end{lemma}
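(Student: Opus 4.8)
The plan is to show that $\pi_0(\Emb(M^{\le 2}, M_g))$ is finite by relating it to the finite set $\pi_0(\Emb(M^{\le 2}, M))$ and then bounding the latter. First I would observe that since $M_g$ is obtained from $M$ by a sequence of connected sums with $S^n \times S^n$ away from $M^{\le 2}$, and since these surgeries are performed in the interior of $M^{>2}$, there is a natural embedding $M \setminus \mr{int}(D^{2n}) \hookrightarrow M_g$ compatible with the inclusion of $M^{\le 2}$; more usefully, by isotopy extension it suffices to control embeddings up to the action of $\Diff(M_g)$, and the stabilisation maps relating $\Emb(M^{\le 2}, M)$ and $\Emb(M^{\le 2}, M_g)$ are compatible with the tower from embedding calculus discussed in \cref{sec:emb-calc-recollection}.

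The cleanest route, however, is to invoke convergence of the embedding calculus tower directly. By \cref{lem:handle-bounds}, the manifold $M^{\le 2}$ has handle dimension $\le 2n-3 = d-3$, so the maps $\Emb(M^{\le 2}, M_g) \to T_r\Emb(M^{\le 2}, M_g)$ have connectivity tending to $\infty$ with $r$; in particular $\pi_0(\Emb(M^{\le 2}, M_g)) \cong \pi_0(T_r\Emb(M^{\le 2}, M_g))$ for $r$ large. It therefore suffices to prove that $\pi_0(T_r\Emb(M^{\le 2}, M_g))$ is finite, which I would do by induction up the tower exactly as in the proof of \cref{prop:induction-tower}: the base case $T_1\Emb(M^{\le 2}, M_g) \simeq \mr{Bun}(TM^{\le 2}, TM_g)$ has finite $\pi_0$ by \cref{lem:section-spaces-finite-CW}(i), using that the fibre $\mr{GL}_d(\bR)/\mr{GL}_k(\bR)$ and the target $M_g$ both have finite homotopy groups through the relevant range — here one uses \cref{lem:handle-bounds}(ii) that $\pi_1(M_g)$ and $\pi_2(M_g)$ are finite together with \cref{lem:fg-homology-implies-fg-homotopy-finite} to see $\pi_k(M_g)$ is finitely generated, but for \emph{finiteness} of $\pi_0$ of the section space one needs the fibre of the relevant fibration to have finite homotopy groups, so one should instead argue at the level of the layers. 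Concretely, for the induction step the fibre of $T_r \to T_{r-1}$ is the section space $\mr{Sect}_{\partial_0 C_r}(\iota^* Z_r(M_g) \to C_r[M^{\le 2}])$ whose fibre is the total homotopy fibre of the cube \eqref{eqn:tohofib}; by \cref{lem:tohofib} this total homotopy fibre has \emph{finite} $\pi_1$ (since $\pi_1(M_g)$ is finite by \cref{lem:handle-bounds}(ii)) and finitely generated higher homotopy groups, and one checks that the relevant section space has finite $\pi_0$ provided $\pi_k$ of the fibre is finite for $k \le \dim C_r[M^{\le 2}] = 2r$. This is where the argument is not quite formal: the total homotopy fibre of \eqref{eqn:tohofib} involves configuration spaces in $M_g$, whose higher homotopy groups are merely finitely generated, not finite.

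The main obstacle is precisely this gap: \cref{lem:section-spaces-finite-CW}(i) gives finiteness of $\pi_0$ of a section space only when \emph{all} homotopy groups of the fibre through the relevant degrees are finite, whereas the fibres appearing here have finitely generated but possibly infinite higher homotopy. I expect the resolution is that one does not need finiteness of $\pi_0(\Emb(M^{\le 2}, M_g))$ for the full manifold but only the statement as phrased, and the honest proof goes the other way around: one shows $\pi_0(\Emb(M^{\le 2}, M_g))$ surjects, with controlled fibres, onto a set built from $[\,M^{\le 2}, M_g\,]$-type data (homotopy classes of maps, normal bundle data) which is finite because $M^{\le 2}$ is a finite complex with finite $\pi_1$ mapping into a target with finitely generated homotopy groups — and then the surjectivity of higher stages of the tower together with the finiteness of the fibres of the tower maps (which \emph{are} finite once one passes to universal covers, since $\pi_1(M_g)$ is finite, by the Serre-class Hurewicz argument of \cref{lem:fg-homology-implies-fg-homotopy-finite}) pins down $\pi_0$. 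So the plan is: (1) reduce via handle-dimension bounds to a finite stage $T_r$; (2) prove $\pi_0(T_1\Emb) = \pi_0\mr{Bun}(TM^{\le 2}, TM_g)$ is finite using that $M^{\le 2}$ is a finite $2$-complex and that $\pi_k$ of the relevant bundle-of-injections and of $M_g$ are finite for $k \le 2$; (3) run the induction up the tower, at each stage using \cref{lem:tohofib} together with the finiteness of the relevant homotopy groups \emph{after passing to universal covers} and the finiteness of $\pi_1$ of everything in sight (guaranteed by \cref{lem:handle-bounds}) to conclude via an obstruction-theoretic count that each $\pi_0$ of each layer, hence of $T_r\Emb(M^{\le 2}, M_g)$, is finite. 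The crux, and the step I would spend the most care on, is step (3): verifying that the finiteness (as opposed to mere finite generation) of $\pi_0$ of the section space follows from finiteness of $\pi_1$ of the fibre plus finite generation of its higher homotopy, which should hold because $\pi_0$ of a section space over a finite complex is filtered by obstruction groups $H^k(C_r; \pi_k(F))$ and the only possibly-infinite contributions come in degrees where the finite fundamental group of the base forces the relevant twisted cohomology with finitely-generated coefficients to still be finite — or, failing that, one observes that the stabilisation $\Emb(M^{\le 2}, M) \to \Emb(M^{\le 2}, M_g)$ is $\pi_0$-surjective with the discrepancy absorbed by the $\pi_0$ of the mapping class group, reducing to the already-known $g = 0$ case \cref{thm:triantafillou-general} via \cref{prop:mcg-finfty}.
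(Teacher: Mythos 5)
Your proposal correctly identifies the starting point --- the convergence of the embedding calculus tower and the role of the handle dimension bound --- but then goes astray in a way you yourself flag: you try to climb the \emph{whole} tower and correctly observe that the layers for $r \ge 2$ involve section spaces whose fibres (total homotopy fibres of configuration-space cubes in $M_g$) have only finitely generated, not finite, higher homotopy groups, so \cref{lem:section-spaces-finite-CW}(i) does not apply. The various escape routes you sketch afterwards (universal covers, reduction to $g=0$, obstruction-theoretic counting) are not what the paper does and would need substantial extra work to make rigorous.

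The key observation you missed is that you never need to climb the tower at all: because $M^{\le 2}$ has handle dimension $h \le 2$, the map $\Emb(M^{\le 2},M_g) \to T_1\Emb(M^{\le 2},M_g) \simeq \mr{Bun}(TM^{\le 2},TM_g)$ is already $(-(h-1)+(2n-2-h)) = (2n-5)$-connected, hence at least $1$-connected for $2n \ge 6$, so it is a bijection on $\pi_0$. Thus the whole problem reduces to showing $\pi_0(\mr{Bun}(TM^{\le 2},TM_g))$ is finite, and here the relevant section spaces are over the finite complex $M^{\le 2}$ of dimension $\le 2$, so \cref{lem:section-spaces-finite-CW}(i) only requires $\pi_k$ of the fibre to be finite for $k \le 2$: for the base of the bundle-map fibration this is $\pi_k(M_g)$, finite for $k \le 2$ by \cref{lem:handle-bounds}(ii), and for the fibre it is $\pi_k(\mr{GL}_{2n}(\bR))$, which is finite for $k \le 2$ (two components, $\pi_1$ of order $2$, $\pi_2 = 0$). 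Your difficulty with the higher layers simply never arises: the low handle dimension of $M^{\le 2}$ means both that $T_1$ already computes $\pi_0$ and that the relevant section spaces live over a $2$-complex, where only low, finite homotopy of the fibre is needed.
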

	\begin{proof}
		Recall from \cref{sec:emb-calc} that the derivative map
		\begin{equation}\label{equ:derivative-embeddings}
		\Emb(M^{\leq 2},M_g)\lra \mathrm{Bun}(TM^{\leq 2},TM_g)
		\end{equation}
		to the space $\mr{Bun}(TM^{\leq 2},TM_g)$ of bundle monomorphisms from $TM^{\leq 2}$ to $TM_g$ can be identified with the first stage of the embedding calculus tower, so the map is $(-(h-1)+(2n-2-h))$-connected where $h$ is the handle dimension of $M^{\leq 2}$. By construction, we have $h\le 2$, so \eqref{equ:derivative-embeddings} is $(2n-5)$-connected and hence a bijection on components as we assumed $2n\ge6$. It thus suffices to show that $\pi_0(\mathrm{Bun}(TM^{\leq 2},TM_g))$ is finite for which we use the fibration
		\[\mr{Sect}(\mathrm{Iso}(TM^{\leq 2},f^*TM_g) \to M^{\leq 2}) \lra \mr{Bun}(TM^{\leq 2},TM_g)\lra \Map(M^{\leq 2},M_g)\] of the proof of \cref{lem:bundle-maps}, with fibre taken over a map $f\in \Map(M^{\leq 2},M_g)$. The long exact sequence of this fibration reduces the claim further to showing that both $\pi_0(\Map(M^{\leq 2},M_g))$ and $\pi_0(\mr{Sect}(\mathrm{Iso}(TM^{\leq 2},f^*TM_g))\to M^{\leq 2})$ are finite, for any choice of $f\in \Map(M^{\leq 2},M_g)$. Since the first two homotopy groups of $M_g$ are finite by \cref{lem:handle-bounds} \ref{enum:pi1-mg} and \ref{enum:pi2-mg}, and by \cref{lem:handle-bounds} \ref{enum:hd-mle2} the manifold $M^{\le2}$ has handle dimension at most $2$, the set $\pi_0(\Map(M^{\leq 2},M_g))$ is finite by \cref{lem:section-spaces-finite-CW}. The same lemma also takes care of the second set, since the fibre of $\mathrm{Iso}(TM^{\leq 2},f^*TM_g)\to M^{\leq 2}$ is homeomorphic to $\mr{GL}_{2n}(\bR)$, which has two homotopy equivalent path components  whose first two homotopy groups are finite: $\pi_1(\mr{GL}_{2n}(\bR))$ is of order $2$ and $\pi_2(\mr{GL}_{2n}(\bR))$ vanishes.
		\end{proof}

\cref{lem:path-compo-finite} allows us to prove the following.

\begin{lemma}\label{lem:A-mcg}For $g\ge0$, the groups $\pi_0(\Diff_\partial(M_g^{>2}))$ and $\pi_0(\Emb^{\cong}(M_g^{>2}))$ are of finite type. Moreover, $\pi_0(\Diff_\partial(M_g^{>2})$ has polycyclic solvable subgroups in the sense of \cref{def:polyclic-solvable-subgroups}.\end{lemma}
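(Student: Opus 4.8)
The plan is to compare the smooth mapping class group $\pi_0(\Diff_\partial(M_g^{>2}))$ with the homotopy mapping class group $\pi_0(\hAut_\partial(M_g^{>2}))$ by running the same machinery as in the proof of \cref{prop:mcg-finfty}, but in the relative setting of a manifold with boundary. The key inputs are the surgery exact sequence and the pseudoisotopy/concordance stability theory, both of which are valid here since $M_g^{>2}$ has dimension $2n\ge6$ and, by \cref{lem:handle-bounds}, finite fundamental group. Concretely, I would first pass through the block diffeomorphism group $\widetilde{\Diff}_\partial(M_g^{>2})$: there is an exact sequence
\[\pi_0(C^{\Diff}_\partial(M_g^{>2}))\lra \pi_0(\Diff_\partial(M_g^{>2}))\lra \pi_0(\widetilde{\Diff}_\partial(M_g^{>2}))\lra 0,\]
and the concordance group on the left is finitely generated by the same Hatcher--Wagoner--Igusa computation used in \cref{prop:mcg-finfty}, using that $\pi_1$ and $\pi_2$ of $M_g^{>2}$ are finite (the latter is finite because $M_g^{>2}$ is built from $M^{>2}$, which has the $\pi_2$ of $M$ up to extension by a free module on the $2$-handles, by attaching $n$-handles with $n\ge3$ — this needs to be checked, or one invokes \cref{lem:fg-homology-implies-fg-homotopy-finite} after noting $\pi_1$ is finite and homology is finitely generated). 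So it remains to show $\pi_0(\widetilde{\Diff}_\partial(M_g^{>2}))$ is of finite type with polycyclic solvable subgroups.

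For the block group, I would use the relative surgery exact sequence and the fibration sequence
\[\pi_1(\hAut_\partial(M_g^{>2})/\widetilde{\Diff}_\partial(M_g^{>2})) \lra \pi_0(\widetilde{\Diff}_\partial(M_g^{>2})) \lra \pi_0(\hAut_\partial(M_g^{>2})),\]
exactly as in \cref{prop:mcg-finfty}. The middle term of the quotient fits into a simple surgery exact sequence whose outer terms are finitely generated abelian: the $L$-group $L_{2n+2}(\pi_1(M_g^{>2}),w)$ is finitely generated by Wall (and the paper's cited references), and the structure-set term $[\Sigma (M_g^{>2}/\partial), \mr{G}/\mr{O}]$ is finitely generated by \cref{lem:section-spaces-finite-CW} since $M_g^{>2}/\partial$ is a finite complex and $\mr{G}/\mr{O}$ has finitely generated homotopy groups. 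Hence this quotient is polycyclic, so by \cref{lem:f-infty} and \cref{lem:polycyclic-sovable-subgroups} it suffices to show that the image of $\pi_0(\widetilde{\Diff}_\partial(M_g^{>2}))$ in $\pi_0(\hAut_\partial(M_g^{>2}))$ is of finite type and has polycyclic solvable subgroups. Since this image is a subgroup of $\pi_0(\hAut_\partial(M_g^{>2}))$, and both finiteness of type and the polycyclic-solvable-subgroup property pass to subgroups in the relevant sense (the latter literally, the former after one knows the ambient group is, e.g., commensurable up to finite kernel with an arithmetic group), it suffices to prove that $\pi_0(\hAut_\partial(M_g^{>2}))$ is commensurable up to finite kernel with an arithmetic group.

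This last point is precisely an application of \cref{prop:triantafillou-haut}: take $X$ to be the finite CW-pair $(M_g^{>2},\partial M_g^{>2})$ — more precisely, work with the relative homotopy automorphisms $\hAut_\partial$, which can be rephrased as pointed homotopy automorphisms of the cofiber $M_g^{>2}/\partial M_g^{>2}$, or handled directly via the relative version of Triantafillou's theorem. Then $\pi_1$ is finite, $M_g^{>2}$ has finitely many cells, and the relevant $h_*$, $h^*$ (the fundamental class and/or the classes needed to cut $\hAut$ down to $\hAut_\partial$) are finite, so \cref{prop:triantafillou-haut} applies and yields commensurability up to finite kernel with an arithmetic group; in particular, by \cref{lem:f-infty-commensurable} and \cref{lem:polycyclic-sovable-subgroups}, $\pi_0(\hAut_\partial(M_g^{>2}))$ is of finite type and has polycyclic solvable subgroups. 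The claim for $\pi_0(\Emb^{\cong}(M_g^{>2}))$ then follows by comparing self-embeddings with self-diffeomorphisms: $\Emb^{\cong}(M_g^{>2})$ receives a map from $\Diff_\partial(M_g^{>2})$ whose effect on $\pi_0$ is controlled by embedding calculus and \cref{prop:induction-tower}, with the discrepancy governed by finitely generated terms, so $\pi_0(\Emb^{\cong}(M_g^{>2}))$ is of finite type as well.

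The main obstacle I anticipate is the relative (boundary) case of Triantafillou's arithmeticity theorem, flagged in \cref{rem:relative-arithm} as not appearing in the literature. One must either carefully adapt the cdga-model arguments of \cite{TriantafillouhAut} to the relative setting — which is why the paper phrases \cref{prop:triantafillou-haut} in terms of stabilizers of finitely many (co)homology classes rather than invoking a relative statement directly, allowing one to reduce relative homotopy automorphisms to absolute ones fixing appropriate classes — or find a clean reduction of $\hAut_\partial(M_g^{>2})$ to such a stabilizer. Getting this reduction exactly right, including the bookkeeping of which (co)homology classes encode the boundary condition and checking they form a finite set, is where the real work lies; everything else is an assembly of the lemmas already in the paper.
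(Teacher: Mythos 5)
Your proposal takes a genuinely different route from the paper, and it runs squarely into the obstacle you yourself flag at the end: it requires a relative (boundary) version of Triantafillou's arithmeticity theorem, which is precisely what \cref{rem:relative-arithm} records as \emph{not} being available in the literature. The workaround you sketch does not close this gap. In particular, homotopy automorphisms of $M_g^{>2}$ fixing $\partial M_g^{>2}$ are not the same thing as pointed homotopy automorphisms of the cofibre $M_g^{>2}/\partial M_g^{>2}$ (a self-equivalence of the quotient need not lift to a self-equivalence of the pair, and the forgetful map is far from an equivalence in general), so the proposed rephrasing is incorrect. Reducing $\hAut_\partial$ to a stabiliser of finitely many (co)homology classes inside the absolute $\hAut^{\pi_1}_*$ of \cref{prop:triantafillou-haut} is not worked out and is exactly the ``real work'' you acknowledge is missing; without it, the argument through block diffeomorphisms, concordances, and the relative surgery sequence has nowhere to land.

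The paper sidesteps this entirely and this is the key idea you are missing. Rather than attacking the manifold-with-boundary $M_g^{>2}$ directly, the proof uses the restriction fibration $\Diff_\partial(M_g^{>2}) \to \Diff(M_g) \to \Emb(M^{\le 2},M_g)$ coming from extending diffeomorphisms to the \emph{closed} manifold $M_g$. Because $M^{\le2}\subset M_g$ has handle codimension at least $3$, embedding calculus (\cref{prop:induction-tower}) shows $\pi_1(\Emb(M^{\le2},M_g),\iota_2)$ is polycyclic-by-finite, and \cref{lem:path-compo-finite} shows $\pi_0(\Emb(M^{\le2},M_g))$ is finite; so $\pi_0(\Diff_\partial(M_g^{>2}))$ is an extension of a finite-index subgroup of $\pi_0(\Diff(M_g))$ by a polycyclic-by-finite group, and the required finiteness properties are inherited from the closed case via \cref{prop:mcg-finfty} and the closure lemmas. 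No relative Triantafillou theorem is needed. For $\pi_0(\Emb^{\cong}(M_g^{>2}))$, the paper does not (and cannot) invoke embedding calculus: this is a codimension-$0$ self-embedding space whose $\pi_0$ is not controlled by \cref{prop:induction-tower}, which only addresses $\pi_k$ for $k\ge1$. Instead the paper uses the boundary map in the long exact sequence of the Weiss fibre sequence \eqref{eq:weiss-K_g} to exhibit $\pi_0(\Emb^{\cong}(M_g^{>2}))$ as a quotient of $\pi_0(\Diff_\partial(M_g^{>2}))$ by an abelian (hence, by the polycyclic-solvable-subgroups property, finitely generated) normal subgroup.
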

\begin{proof}
We consider the exact sequence 	\[
		\pi_1(\Emb(M^{\leq 2},M_g),\iota_2)\lra\pi_0(\Diff_{\partial}(M^{>2}_g))\lra\pi_0(\Diff(M_g)) \lra\pi_0(\Emb(M^{\leq 2},M_g))
	\]
	induced from the fibre sequence $\Diff_{\partial}(M^{>2}_g)\to\Diff(M_g)\to\Emb(M^{\leq 2},M_g)$ obtained by restriction along the inclusion $\iota_{2}\colon \smash{M^{\le2}}\hookrightarrow M_g$. Arguing as in Assertion \ref{A-emb-homotopy}, using \cref{lem:handle-bounds} \ref{enum:hd-mle2} and \ref{enum:pi1-mg}, and \cref{prop:induction-tower}, the leftmost group in this sequence is polycyclic-by-finite. The same holds for its image in $\pi_0(\Diff_\partial(M_g^{>2}))$ since polycyclic-by-finite groups are closed under taking quotients (see \cref{lem:group-classes-closure}). Moreover, by \cref{lem:path-compo-finite} the set $\pi_0(\Emb(M^{\leq 2},M_g))$ is finite, so we have an extension \[1\lra F \lra \pi_0(\Diff_{\partial}(M^{>2}_g)) \lra G \lra 1\] where $G\le \pi_0(\Diff(M_g))$ is a finite index subgroup and $F$ is polycyclic-by-finite. This implies the claim for $\pi_0(\Diff_{\partial}(M^{>2}_g))$, since (a) being of finite type and having polycyclic solvable subgroups is preserved under extensions and taking finite-index subgroups by Lemmas~\ref{lem:f-infty} and~\ref{lem:polycyclic-sovable-subgroups}, and (b) the groups $F$ and $\pi_0(\Diff(M_g))$ have this property; the former by Lemmas~\ref{lem:polycyclic-sovable-subgroups} and~\ref{lem:finite-type-groups-closure} and the latter by \cref{thm:mcg-finfty} since $M_g$ is a \emph{closed} manifold.
	
	To see the part concerning $\pi_0(\Emb^{\cong}(M^{>2}_g))$, we use the exact sequence
\[
\pi_2(B^2\Diff_\partial(\partial M^{>2}\times I)) \xlra{\partial}\pi_1(B\Diff_\partial(M_g^{>2})) \lra \pi_1(B\Emb^{\cong}(M^{>2}_g))\lra 0.
\]
induced by \eqref{eq:weiss-K_g}, which yields an extension
	\beq
	1\lra \mr{im}(\partial)\lra\pi_0(\Diff_{\partial}(M^{>2}_g))\lra\pi_0(\Emb^{\cong}(M^{>2}_g))\lra 1.
	\eeq
	As $\pi_2(B^2\Diff_\partial(\partial M^{>2}\times I)) $ is abelian, so is $\mr{im}(\partial)$. Since $\pi_0(\Diff_{\partial}(M^{>2}_g))$ has polycyclic solvable subgroups by the first part, the abelian subgroup $\mr{im}(\partial)$ is finitely generated and thus of finite type. Since $\pi_0(\Diff_{\partial}(M^{>2}_g))$ is also of finite type, it follows from \cref{lem:f-infty} that $\pi_0(\Emb^{\cong}(M^{>2}_g))$ is of finite type as well and the proof is finished.\end{proof}
	
Assertion~\ref{A-emb-homology} now follows from two applications of the second part of \cref{prop:finite-type-homotopy-groups}, one to $X=\Emb(M^{\leq 2},M)_{\iota_2}$ and one to $X = B\Emb^{\cong}(M^{\geq 2}_g)$.  In the former case, the hypothesis that $\pi_k(X)$ is of finite type for all $k$ is satisfied as a result of Assertion~\ref{A-emb-homotopy}: for $k\ge2$ since it is abelian and finitely generated, and for $k=1$ since it is polycyclic-by-finite and thus finite type by \cref{lem:finite-type-groups-closure}. In the latter case, the hypothesis the hypothesis that $\pi_k(X)$ is of finite type is satisfied for $k=1$ by \cref{lem:A-mcg} and for $k\ge2$ since it is finitely generated abelian and thus of finite type by Assertion~\ref{A-emb-homotopy}.

\subsection*{Assertion \ref{A-mcg2}: \textnormal{\textit{$\pi_0(\Diff(M_g)_{\iota_2})$ is of finite type}}}
The group $\pi_0(\Diff(M_g)_{\iota_2})$ is the stabiliser of the inclusion $\iota_2\colon M^{\leq 2} \hookrightarrow M_g$ with respect to the action of $\pi_0(\Diff(M_g))$ on $\pi_0(\Emb(M^{\leq 2},M_g))$ by precomposition. As the latter is a finite set by \cref{lem:path-compo-finite}, the subgroup \[\pi_0(\Diff(M_g)_{\iota_2})\le \pi_0(\Diff(M_g))\] has finite index. As $\pi_1(M_g)$ is finite by \cref{lem:handle-bounds} \ref{enum:pi1-mg} and $M_g$ is closed, $\pi_0(\Diff(M_g))$ is of finite type by \cref{thm:mcg-finfty}, so by \cref{lem:f-infty} the same holds for $\pi_0(\Diff(M_g)_{\iota_2})$.

\subsection*{Assertion \ref{A-diff-homology}: \textnormal{\textit{\textnormal{(b)}$_{k}$ implies \textnormal{(c)}$_{k}$}}}
We begin by choosing a Moore--Postnikov $3$-factorisation of a tangent classifier of $M$,
\[
M \xrightarrow{\ell_{M}} B \xlra{\lambda} B\mr{O}(2n),
\]
 i.e.~a factorisation into a $3$-connected cofibration $\ell_{M}$ followed by a $3$-co-connected fibration. As $\pi_3(B\mr{O}(2n))$ vanishes, this factorisation induces a canonical isomorphism
 \begin{equation}\label{equ:homotopy-groups-B}\pi_k(B) \cong \begin{cases} \pi_k(M) & \text{if $k \leq 2$,} \\
 	\pi_k(B\mr{O}(2n)) & \text{if $k \geq 3$.} \end{cases}\end{equation}
Using the notation from \cref{section:GRW}, the map $\ell_{M}$ is covered by an element $\ell_M\in \mr{Bun}(TM,\lambda^*\gamma_{2n})$, which in turn gives an element $\ell_{M_{0,1}}\in \mr{Bun}(TM_{0,1},\lambda^*\gamma_{2n})$ by restriction. Up to homotopy, this extends uniquely to an element $\ell_{M_g}\in \mr{Bun}(TM_g,\lambda^*\gamma_{2n})$ by the following lemma.

\begin{lemma}\label{lem:lambda-str-unique}
The class of $\ell_{M_{0,1}}$ has a unique preimage under the map 
\[\pi_0(\mr{Bun}(TM_{g},\lambda^*\gamma_{2n}))\lra \pi_0(\mr{Bun}(TM_{0,1},\lambda^*\gamma_{2n})) \] induced by the inclusion $M_{0,1}\subset M_g$.
\end{lemma}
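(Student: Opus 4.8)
The plan is to identify $\mr{Bun}(TW,\lambda^*\gamma_{2n})$ with a section space and exploit that $\lambda$ is $3$-co-connected. Recall that for any manifold $W$ the space $\mr{Bun}(TW,\lambda^*\gamma_{2n})$ is the space of sections of the fibre bundle $E_W:=\mr{Fr}(TW)\times_{\mr{O}(2n)}\mr{Fr}(\lambda^*\gamma_{2n})\to W$, whose fibre $\mr{Fr}(\lambda^*\gamma_{2n})=\lambda^*E\mr{O}(2n)$ is weakly equivalent to the homotopy fibre $F:=\hofib(\lambda)$ of $\lambda\colon B\to B\mr{O}(2n)$ since $E\mr{O}(2n)$ is contractible. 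As $\lambda$ is $3$-co-connected, the long exact sequence of $F\to B\xrightarrow{\lambda}B\mr{O}(2n)$ gives $\pi_k(F)=0$ for $k\ge 3$ (whereas $\pi_2(F)\cong\pi_2(M)$, which is what makes the case $n=3$ non-trivial). Naturality in $W$ turns the map in the statement into the restriction map $\mr{Sect}(E_{M_g}\to M_g)\to\mr{Sect}(E_{M_g}|_{M_{0,1}}\to M_{0,1})$, which is a fibration whose homotopy fibre over $\ell_{M_{0,1}}$ is the space $\mr{Sect}_{M_{0,1}}(E_{M_g}\to M_g)$ of sections extending $\ell_{M_{0,1}}$. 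It suffices to show this fibre has trivial $\pi_0$; I would in fact show it is weakly contractible, and since the same argument applies over every section of $E_{M_g}|_{M_{0,1}}$, that the restriction map is a weak equivalence.

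The required input about $M_g$ is as follows: by construction $M_{g,1}$ is obtained from $M_{0,1}$ by attaching handles of index $n$ and $M_g$ from $M_{g,1}$ by attaching one handle of index $2n$, so $(M_g,M_{0,1})$ is, rel $M_{0,1}$, homotopy equivalent to a relative CW complex with cells in dimensions $n$ and $2n$ only. Furthermore the attaching spheres of the index-$n$ handles may be isotoped into the boundary sphere $\partial M_{0,1}\cong S^{2n-1}$ (possible as $2(n-1)<2n-1$), and since $\pi_{n-1}(S^{2n-1})=0$ each resulting attaching map $S^{n-1}\to M_{0,1}$ is null-homotopic.

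Granting this, I would build $\mr{Sect}_{M_{0,1}}(E_{M_g}\to M_g)$ up as an iterated fibration over the relative skeleta of $(M_g,M_{0,1})$. Trivialising $E_{M_g}$ over a contractible relative $k$-cell with attaching map $\phi$, the corresponding stage has fibre the homotopy fibre of $\mr{Map}(D^k,F)\to\mr{Map}(S^{k-1},F)$ over the pullback along $\phi$ of the section already fixed on the preceding skeleton; whenever this pullback is null-homotopic that homotopy fibre is weakly equivalent to $\Omega^k F$, and hence weakly contractible because $\pi_{\ge k}(F)=0$ for $k\ge 3$. For the single $2n$-cell the pullback is automatically null-homotopic, since $\pi_{2n-1}(F)=0$; for each $n$-cell it is null-homotopic because $\phi$ is, a section restricting along a null-homotopic map to a null-homotopic one. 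Assembling the stages shows $\mr{Sect}_{M_{0,1}}(E_{M_g}\to M_g)$ is weakly contractible, proving the lemma (and, run over an arbitrary section, the weak-equivalence refinement).

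The main obstacle — and the only place where $2n\ge 6$ is used in an essential way — is the relative $n$-cells in the case $n=3$: the obstruction to extending a $\lambda$-structure over such a cell a priori lies in $\pi_2(F)\cong\pi_2(M)$, which need not vanish, so one really has to use that the connected-sum handles are attached trivially (their attaching spheres bound discs in $\partial M_{0,1}$), as captured above by the null-homotopy of the attaching maps. For $n\ge 4$ the pair $(M_g,M_{0,1})$ has no relative cells below dimension $n\ge 4$, so the argument degenerates to the observation that $3$-skeleta are unchanged; I would nevertheless present the null-homotopy argument uniformly. One should also keep in mind that $F$ can be disconnected, with $\pi_0(F)\cong\bZ/2$ when $M$ is orientable; since $\mr{O}(2n)$ acts transitively on $\pi_0(F)$, its components are all homotopy equivalent and the obstruction-theoretic bookkeeping goes through unchanged.
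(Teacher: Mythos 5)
Your proposal is correct and shares the basic strategy of the paper's argument: both reduce the question to obstruction theory against the homotopy fibre $F=\hofib(\lambda)$ (for which $\pi_k(F)=0$ for $k\ge3$ by $3$-co-connectivity) over the relative handle decomposition of $(M_g,M_{0,1})$, whose handles sit in indices $n$ and $2n$. The difference lies in how the two arguments dispose of the only non-automatic obstruction, namely $\pi_{n-1}(F)$ over the $n$-handles when $n=3$. The paper writes $M_{g,1}\cong M_{0,1}\natural W_{g,1}$ with $W_{g,1}=(S^n\times S^n)^{\sharp g}\setminus\mr{int}(D^{2n})$ and uses that $W_{g,1}$ is parallelisable to produce a constant extension of the $\lambda$-structure over $W_{g,1}$, which sidesteps any case distinction in $n$. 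You instead argue that each $n$-handle's attaching map $S^{n-1}\to M_{0,1}$ is null-homotopic, which also kills the obstruction; this is a valid and arguably more transparent route. Two small points. First, your justification for the null-homotopy is a bit off: a general-position count $2(n-1)<2n-1$ does not let you isotope a submanifold \emph{into} the boundary. The clean reason is the same boundary connected-sum picture the paper uses: the $n$-handles of $(M_{g,1},M_{0,1})$ are exactly those of $W_{g,1}$, attached along spheres $S^{n-1}$ embedded in the $(2n-1)$-sphere $\partial M_{0,1}$ (more precisely in a collar of it), where $\pi_{n-1}(S^{2n-1})=0$ forces them to bound discs, hence to be null-homotopic in $M_{0,1}$. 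Second, the identification $\pi_2(F)\cong\pi_2(M)$ is slightly imprecise; from the long exact sequence of $F\to B\to B\mr{O}(2n)$ one gets $\pi_2(F)\cong\ker\bigl(w_2\colon\pi_2(M)\to\bZ/2\bigr)$, but since you only need $\pi_2(F)$ to be \emph{possibly} nonzero, this does not affect the argument.
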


\begin{proof}
The element $\ell_{M_{0,1}}$ gives a choice of tangent classifier $\tau_{M_{0,1}}\colon M_{0,1}\ra B\mr{O}(2n)$ which we may extend along the inclusions $M_{0,1}\subset M_{g,1}\subset M_g$ to tangent classifiers $\tau_{M_{g,1}}$ and $\tau_{M_{g}}$ respectively. Identifying the space of bundle maps $\mr{Bun}(TN,\lambda^*\gamma_{2n})$ (up to weak equivalence) with the space of lifts of a tangent classifier $N\ra B\mr{O}(2n)$ along $\lambda\colon B\ra B\mr{O}(2n)$, we see that the claim is equivalent to showing existence and uniqueness of a solution to the lifting problem
\[
\begin{tikzcd}
M_{0,1}\arrow[r,"\ell_{M_{0,1}}"]\arrow[d,hook]&B\arrow[d,"\lambda"]\\[-4pt]
M_{g}\arrow[ur,dashed]\arrow[r,"\tau_{M_{g}}",swap]&B\mr{O}(2n)
\end{tikzcd}
\]
As $\lambda$ is $3$-co-connected and the inclusion $M_{0,1}\subset M_g$ is $(n-1)$-connected, the uniqueness part follows from obstruction theory and the assumption that $n \geq 3$. Existence would also follow from obstruction theory as long as $n > 3$, but to also handle the case $n=3$ we proceed differently. By obstruction theory it suffices to construct a lift on $M_{g,1}\subset M_{g}$. Viewing $M_{g,1}$ as the boundary connected sum $M_{g,1}\cong M_{0,1}\natural W_{g,1}$ with $W_{g,1}\coloneqq (S^n\times S^n)^{\sharp g}\backslash \mr{int}(D^{2n})$, this follows from the fact that $W_{g,1}$ is parallelisable: without loss of generality, $\tau_{M_g}$ and $\ell_{M_{0,1}}$ are constant on $M_{0,1} \cap W_{g,1}$ and $\tau_{W_{g,1}}$ is constant on $W_{g,1}$, so that we may take the lift over $W_{g,1}$ to be constant as well.
\end{proof}
 
We write $B\Diff^\lambda(M_g)_{\ell_{M_g}}\subset B\Diff^\lambda(M_g)$ for the path component induced by the component $[\ell_{M_g}]\in \pi_0(\mr{Bun}(TM_g,\lambda^*\gamma_{2n}))$ ensured by the previous lemma.

\begin{proposition}\label{prop:HDiffK_g fg}
For $k\leq \varphi_{\pi_1(M)}(g)$, the group $H_k(B\Diff^\lambda(M_g)_{\ell_{M_g}};\bZ)$ is finitely generated.
	\end{proposition}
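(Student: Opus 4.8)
The plan is to derive the statement from \cref{thm:stable-finiteness}, applied to the closed manifold $M_g$ together with the tangential structure $\lambda\colon B\to B\mr{O}(2n)$ fixed above and the bundle map $\ell_{M_g}$ provided by \cref{lem:lambda-str-unique}, so that $\BDiff^\lambda(M_g)_{\ell_{M_g}}$ is exactly the path component appearing in that theorem. Carrying this out amounts to verifying the three hypotheses of \cref{thm:stable-finiteness} for the pair $(M_g,\lambda)$ and to checking that the finiteness range $k\le\varphi_{\pi_1(M_g)}(g(M_g,\ell_{M_g}))$ produced there contains the range $k\le\varphi_{\pi_1(M)}(g)$ claimed here.

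Hypotheses (i) and (ii) I would dispatch directly: $\pi_1(M_g)$ is finite by \cref{lem:handle-bounds} (ii), and by \eqref{equ:homotopy-groups-B} the group $\pi_1(B)$ is isomorphic to $\pi_1(M)$ and hence finite, while for $k\ge2$ the group $\pi_k(B)$ is isomorphic either to $\pi_2(M)$ (finite, by the standing hypothesis of \cref{thm:pi2 finite}) or to $\pi_k(B\mr{O}(2n))$ (finitely generated), so is finitely generated in either case.

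The main obstacle is hypothesis (iii): that $\tau_{\le 2n}B^{\mr{fib}}_{\bQ}$ is weakly equivalent to a product of generalised Eilenberg--MacLane spaces. I would obtain this by comparison along $\lambda$ with the corresponding statement for $B\mr{O}(2n)$ recorded in \cref{ex:BO(2n)-example}. Because $\lambda$ is $3$-co-connected it induces an isomorphism on $\pi_k$ for $k\ge4$ and $\pi_3(B)=0$; combined with the fact that $\pi_2(B)\cong\pi_2(M)$ is finite --- this is the point where the hypothesis of \cref{thm:pi2 finite} enters in an essential way --- the map induced by $\lambda$ on the rationalised homotopy fibres of the maps to the $1$-truncations, i.e.\ the map $\widetilde{B}_\bQ\to B\mr{SO}(2n)_\bQ$, is a weak equivalence. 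Fibrewise rationalisation is functorial along the map of $1$-truncations $Bw\colon B\pi_1(M)\to B(\bZ/2)$ induced by $\lambda$ (with $w$ the orientation character of $M$), and I expect this to identify $\tau_{\le 2n}B^{\mr{fib}}_\bQ$ with the pullback along $Bw$ of $\tau_{\le 2n}B\mr{O}(2n)^{\mr{fib}}_\bQ$ --- the comparison map being the identity on $\pi_1$ and, by the previous sentence, a weak equivalence on fibres. Since by \cref{ex:BO(2n)-example} the space $\tau_{\le 2n}B\mr{O}(2n)^{\mr{fib}}_\bQ$ is a product $K(\bZ/2,\bQ,2n)\times\prod_i K(\bQ,4i)$ in which only the twisted Euler factor carries the fundamental group, its pullback along $Bw$ is again such a product, now of the shape $K(\pi_1(M),\bQ_w,2n)\times\prod_i K(\bQ,4i)$. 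Should quoting functoriality of fibrewise rationalisation along a nontrivial map of $1$-truncations prove delicate, the fallback is a direct argument with the Postnikov tower of $\tau_{\le 2n}B^{\mr{fib}}_\bQ$ over $B\pi_1(M)$: every $k$-invariant then lies in an odd-degree cohomology group --- with coefficients in a finite-dimensional rational $\pi_1(M)$-module --- of a stage whose rational cohomology is concentrated in even degrees, and therefore vanishes, using that a finite group has trivial rational cohomology in positive degrees.

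It then remains to compare the two ranges. Using $\pi_1(M_g)\cong\pi_1(M)$ (again \cref{lem:handle-bounds} (ii)) we may work with a single function, which we assume non-decreasing without loss of generality, so it suffices to see that $g(M_g,\ell_{M_g})\ge g$. Writing $M_{g,1}\cong M_{0,1}\natural W_{g,1}$ with $W_{g,1}=(S^n\times S^n)^{\sharp g}\setminus\mr{int}(D^{2n})$ as in the proof of \cref{lem:lambda-str-unique}, the construction there yields an $\ell_{M_g}$ whose underlying map is constant on $W_{g,1}$; hence the $g$ standard pairwise disjoint copies of $S^n\times S^n\setminus\mr{int}(D^{2n})$ inside $W_{g,1}\subset M_{g,1}\subset M_g$ exhibit $g(M_g,\ell_{M_g})\ge g$. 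Monotonicity then gives $\varphi_{\pi_1(M_g)}(g(M_g,\ell_{M_g}))\ge\varphi_{\pi_1(M)}(g)$, and \cref{thm:stable-finiteness} yields that $H_k(\BDiff^\lambda(M_g)_{\ell_{M_g}};\bZ)$ is finitely generated for $k\le\varphi_{\pi_1(M)}(g)$, as desired.
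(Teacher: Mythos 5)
Your proposal is correct and follows the same overall strategy as the paper: apply \cref{thm:stable-finiteness} to the pair $(M_g,\lambda)$, verify hypotheses (i)--(iii) using \eqref{equ:homotopy-groups-B} and the finiteness of $\pi_2(M)$, and bound $g(M_g,\ell_{M_g})\ge g$ via the embedded $W_{g,1}\subset M_g$ and the nullhomotopy of $\ell_{M_g}|_{W_{g,1}}$ coming from the construction in \cref{lem:lambda-str-unique}. Where you go beyond the paper is in hypothesis (iii): the paper simply asserts the equivalence $B_\bQ^{\mr{fib}}\simeq\prod_{1\le k\le n-1}K(\bQ,4k)\times K(\pi_1 M,\bQ_w,2n)$ as an immediate consequence of $\pi_2(M)$ being finite, whereas you supply an actual argument, either by pulling back \cref{ex:BO(2n)-example} along $Bw\colon B\pi_1(M)\to B\bZ/2$ using naturality of fibrewise rationalisation (correct, and the pullback calculation is as you say since only the twisted Euler factor of $B\mr{O}(2n)^{\mr{fib}}_\bQ$ sits nontrivially over $B\bZ/2$), or via the fallback observation that every $k$-invariant of the relative Postnikov tower lies in an odd-degree twisted rational cohomology group of a stage whose such cohomology is concentrated in even degrees (by finiteness of $\pi_1(M)$ and evenness of the rational homotopy), hence vanishes. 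Both are sound; the $k$-invariant argument in particular is a clean way to make the paper's terse assertion rigorous. You also correctly make explicit the point, left implicit in the paper, that one should take $\varphi_{\pi_1(M)}$ to be non-decreasing (harmless, since $\min_{g'\ge g}\varphi(g')$ still works) and that $\pi_1(M_g)\cong\pi_1(M)$, so the two ranges compare as required. No gaps.
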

	\begin{proof}
This will follow from \cref{thm:stable-finiteness} once we show that the genus of $(M_g,\ell_{M_g})$ is at least $g$ and that the hypotheses (i)--(iii) of that theorem are satisfied. 

To prove the former, recall that the genus of $(M_g,\ell_{M_g})$ agrees with the maximal number of disjointly embedded copies of $e\colon S^n\times S^n\backslash D^{2n}\hookrightarrow M_g$, such that the map $(\ell_{M_g}\circ e)\colon S^n\times S^n\backslash \mr{int}(D^{2n})\ra B$ is nullhomotopic. Since $W_{g,1} = (S^n\times S^n)^{\sharp g}\backslash \mr{int}(D^{2n})$ has genus $g$ and there is an evident embedding $e':W_{g,1}\hookrightarrow M_g=M\sharp (S^n\times S^n)^{\sharp g}$, it suffices to show that the map $(\ell_{M_g} \circ e')$ is null-homotopic. This follows directly from the proof of \cref{lem:lambda-str-unique}.

To establish the hypotheses (i)--(iii) of \cref{thm:stable-finiteness}, we use \eqref{equ:homotopy-groups-B}. Since $B\mr{O}(2n)$ has f.g.\ homotopy groups and $\pi_k(M)$ is finite for $k\le 1$ and f.g.\ for $k=2$, assumptions (i) and (ii) are clearly satisfied. Postnikov $2$-truncation, the twisted Euler class and the Pontryagin classes further induce a map to a generalised Eilenberg--MacLane-space 
\begin{equation}\label{EM-decomposition}\textstyle{B_\bQ^{\mr{fib}}\lra K\Big(\pi_1(B),\big(\pi_2(B)\otimes\bQ\big)[2]\oplus\bQ[2n]\oplus\bigoplus_{1\le i\le n-1}\bQ[4i]\Big)}\end{equation} where the action of $\pi_1(B)\cong \pi_1(M)$ on $\pi_2(B)\otimes\bQ$ is the usual one and the action on $\pi_*(BO(2n))\otimes\bQ=\bQ[2n]\oplus\bigoplus_{1\le i\le n-1}\bQ[4i]$ is via the map $B\ra BO(2n)$. This map is an equivalence in view of \eqref{equ:homotopy-groups-B}, so assumption (iii) is satisfied and \cref{thm:stable-finiteness} applies.
\end{proof}

The key lemma to reduce Assertion \ref{A-diff-homology} to \cref{prop:HDiffK_g fg} is the following.

\begin{lemma}\label{lem:diff-inc} The image of the canonical map
	\[\pi_1(B\Diff^\lambda(M_g)_{\ell_{M_g}}) \lra \pi_1(B\Diff(M_g))\cong \pi_0(\Diff(M_g))\]
	agrees with the subgroup $\pi_0(\Diff(M_g)_{\iota_2})\subset \pi_0(\Diff(M_g))$.
\end{lemma}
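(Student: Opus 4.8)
The plan is to first identify the image of this map using the long exact sequence of the fibration sequence $\mr{Bun}(TM_g,\lambda^*\gamma_{2n})\to B\Diff^\lambda(M_g)\to B\Diff(M_g)$. Based at the component of $\ell_{M_g}$, exactness identifies the image of $\pi_1(B\Diff^\lambda(M_g)_{\ell_{M_g}})\to\pi_1(B\Diff(M_g))\cong\pi_0(\Diff(M_g))$ with the kernel of the connecting homomorphism $\pi_0(\Diff(M_g))\to\pi_0(\mr{Bun}(TM_g,\lambda^*\gamma_{2n}))$, which sends a mapping class $[\phi]$ to $[\ell_{M_g}\circ D\phi]$; thus this image is the stabiliser $S\coloneqq\mr{Stab}_{\pi_0(\Diff(M_g))}([\ell_{M_g}])$ of the path component of $\ell_{M_g}$ under the derivative action. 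Since $\pi_0(\Diff(M_g)_{\iota_2})$ is by definition the stabiliser of $[\iota_2]\in\pi_0(\Emb(M^{\le2},M_g))$ under the action $[\phi]\cdot[e]=[\phi\circ e]$, what remains is to show $S=\pi_0(\Diff(M_g)_{\iota_2})$, and I would do so by comparing the two stabilisers after restricting structures along $\iota_2$.

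For this I would establish three auxiliary facts. First, restriction along $D\iota_2$ induces an \emph{injection} $\pi_0(\mr{Bun}(TM_g,\lambda^*\gamma_{2n}))\hookrightarrow\pi_0(\mr{Bun}(TM^{\le2},\lambda^*\gamma_{2n}))$: identifying bundle-map spaces with spaces of lifts of a tangent classifier along $\lambda$, this is obstruction theory, using that the pair $(M_g,M^{\le2})$ is $2$-connected — because $M_g=M^{>2}_g\cup M^{\le2}$ is obtained from $M^{\le2}$ by attaching only handles of index $\ge3$ (the index-$\ge3$ handles of $M$, together with $n$-handles from the stabilisation, $n\ge3$) — whereas $\hofib(\lambda)$ has homotopy groups concentrated in degrees $\le2$ since $\lambda$ is $3$-co-connected, so that all the relevant relative obstruction groups $H^k(M_g,M^{\le2};\pi_k(\hofib(\lambda)))$ vanish. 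Second, post-composition with the bundle isomorphism $\ell_{M_g}$ induces a bijection $\pi_0(\mr{Bun}(TM^{\le2},TM_g))\xrightarrow{\cong}\pi_0(\mr{Bun}(TM^{\le2},\lambda^*\gamma_{2n}))$: this is a map of fibrations lying over $\mr{Map}(M^{\le2},M_g)\to\mr{Map}(M^{\le2},B)$ that is a weak equivalence on each fibre, and since $M^{\le2}$ has handle dimension $\le2$ the map on base spaces is $1$-connected, provided the underlying map $\ell'_{M_g}\colon M_g\to B$ of $\ell_{M_g}$ is $3$-connected — which holds because it restricts over $M_{0,1}$ to the $3$-connected map $\ell_M|_{M_{0,1}}$, is constant over the parallelisable region $W_{g,1}$, and $\pi_{\le2}$ is unchanged by connected sum with $S^n\times S^n$ for $n\ge3$. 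Third, the derivative $\Emb(M^{\le2},M_g)\to\mr{Bun}(TM^{\le2},TM_g)$ is the first embedding-calculus approximation, hence $(2n-5)$-connected and in particular a bijection on path components as $2n\ge6$ (as already used in the proof of \cref{lem:path-compo-finite}).

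Granting these, the lemma drops out of a chain of equivalences. A class $[\phi]$ lies in $S$ iff $\ell_{M_g}\circ D\phi\simeq\ell_{M_g}$ as bundle maps $TM_g\to\lambda^*\gamma_{2n}$; by the first fact this is equivalent to equality of the restrictions along $D\iota_2$, i.e.\ to $\ell_{M_g}\circ D(\phi\circ\iota_2)\simeq\ell_{M_g}\circ D\iota_2$ in $\mr{Bun}(TM^{\le2},\lambda^*\gamma_{2n})$ (using $D\phi\circ D\iota_2=D(\phi\circ\iota_2)$ and that $\ell_{M_g}\circ D\iota_2$ is the restriction of $\ell_{M_g}$ to $M^{\le2}$); by the second fact this is equivalent to $D(\phi\circ\iota_2)\simeq D\iota_2$ in $\mr{Bun}(TM^{\le2},TM_g)$; and by the third fact this is equivalent to $\phi\circ\iota_2\simeq\iota_2$ in $\Emb(M^{\le2},M_g)$, i.e.\ to $[\phi]\in\pi_0(\Diff(M_g)_{\iota_2})$.

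I expect the main obstacle to be the two ``no loss of information'' assertions in the middle paragraph: that passing to the handle-$\le2$ part $M^{\le2}$ forgets nothing about $\lambda$-structures up to homotopy, and that $\ell_{M_g}$ itself implements the comparison between bundle maps into $TM_g$ and into $\lambda^*\gamma_{2n}$ on path components. Both rest on the same two inputs — $M^{\le2}$ has handle dimension $\le2$, and $\ell_M\colon M\to B$ is $3$-connected, so $B$ agrees with $M$ on $\pi_{\le2}$ and with $B\mr{O}(2n)$ above — and the one point that needs care is the persistence of $3$-connectivity of the underlying map of $\ell_{M_g}$ after stabilisation, for which one uses that the stabilisation handles have index $n\ge3$ and that connect-summing a closed manifold of dimension $2n\ge6$ with $S^n\times S^n$ changes neither $\pi_1$ nor $\pi_2$. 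The remaining manipulations with long exact sequences and derivatives are formal.
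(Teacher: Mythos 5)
Your proposal is correct and follows essentially the same route as the paper. The paper also identifies the image as the stabiliser of $[\ell_{M_g}]$ under the derivative action and then compares it to $\pi_0(\Diff(M_g)_{\iota_2})$ by restricting along $\iota_2$, via the commutative diagram
\[
\begin{tikzcd} \pi_0(\Diff(M_g)) \rar \dar &\pi_0(\mr{Bun}(TM_g,TM_g))\rar\dar & \pi_0(\mr{Bun}(TM_g,\lambda^*\gamma)) \dar  \\[-4pt]
\pi_0(\Emb(M^{\leq 2},M_g)) \rar &\pi_0(\mr{Bun}(TM^{\leq 2},TM_g)) \rar &\pi_0(\mr{Bun}(TM^{\leq 2},\lambda^*\gamma)),\end{tikzcd}
\]
and its proof rests on exactly your three auxiliary facts: the derivative $\Emb(M^{\le 2},M_g)\to\mr{Bun}(TM^{\le2},TM_g)$ is a $\pi_0$-bijection by the embedding-calculus connectivity estimate (this is already invoked in \cref{lem:path-compo-finite}), post-composition with $\ell_{M_g}$ has singleton preimage at the relevant class, and restriction along $D\iota_2$ on $\lambda$-bundle maps is injective, both by obstruction theory using $3$-co-connectedness of $\lambda$ and the fact that $(M_g,M^{\le2})$ has relative cells of index $\ge 3$. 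Your presentation differs only in framing: you phrase the middle step as a $1$-connectivity statement for the induced map on mapping spaces, pinned on $3$-connectivity of the underlying map $M_g\to B$ of $\ell_{M_g}$, whereas the paper phrases it directly as uniqueness of a lift in a relative lifting problem — these are the same obstruction-theoretic input. (As a minor remark on the paper's exposition rather than on your proof: the descriptions the paper attaches to the two obstruction-theoretic steps appear to be interchanged; your assignment of which argument proves which injectivity is the correct one.)
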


\begin{proof}
	There is a commutative diagram
	\[\begin{tikzcd} \pi_0(\Diff(M_g)) \rar \dar &\pi_0(\mr{Bun}(TM_g,TM_g))\rar\dar & \pi_0(\mr{Bun}(TM_g,\lambda^*\gamma)) \dar{\circled{2}}  \\[-4pt]
		\pi_0(\Emb(M^{\leq 2},M_g)) \arrow{r}{\circled{1}} &\pi_0(\mr{Bun}(TM^{\leq 2},TM_g)) \arrow{r}{\circled{3}} &\pi_0(\mr{Bun}(TM^{\leq 2},\lambda^*\gamma))\end{tikzcd} \]
		where the vertical arrows are induced by restriction, the left horizontal arrows by taking derivatives, and the right horizontal arrows by postcomposition with $\ell_{M_g}$. The image in question agrees with the preimage of $\ell_{M_g}$ under the top horizontal composition and the subgroup $\pi_0(\Diff(M_g)_{\iota_2})$ is the preimage of $\iota_{2}$ under the left vertical map. As the images of $\iota_{2}$ and $\ell_{M_g}$ in the bottom right corner agree by construction, it suffices to show that the preimages under the numbered maps of the respective images of $[\mr{id}]\in \pi_0(\Diff(M_g))$ are singletons. In the proof of \cref{lem:path-compo-finite} we have seen that $\circled{1}$ is a bijection, so there is nothing to show in this case. For  $\circled{2}$, the claim is equivalent to uniqueness up to homotopy of lifts
			\[\begin{tikzcd} M^{\le 2} \dar[hook] \rar{\ell_{M^{\le 2}}} & B \dar \\[-4pt]
		M_g\rar[swap]{\tau_{M_g}}\arrow[ur,dashed] & B\mr{O}(2n).\end{tikzcd}\]
		which follows by obstruction theory since $M_{g}$ can be obtained from $M^{\le 2}$ by attaching handles of index at least $3$ and the right vertical map is $3$-co-connected by construction. Finally, we can show that $\circled{3}$ is injective. This is equivalent to showing the uniqueness up to homotopy of a lift of $\ell_{M^{\le2}}\colon M^{\le2}\ra B$ along $M_g\ra B$, which is another consequence of obstruction theory since $M^{\le2}$ has handle dimension at most $2$ and $M_g\ra B$ is $3$-connected.
\end{proof}

We are now in a position to prove Assertion~\ref{A-diff-homology}. Fixing $k\ge1$ and assuming that $\pi_i(B\Diff(M_g)_{\iota_2})$ is finitely generated for $1\le i\le k$ and $g\ge0$, we need to show that $H_i(B\Diff(M_g)_{\iota_2};\bZ)$ is finitely generated for $1\le i\le k+1$ and $k+1\le \varphi_{\pi_1(M)}(g)$. Using \cref{lem:diff-inc}, we see that there is a fibration sequence
	\[\mr{Bun}(TM_g,\lambda^*\gamma)_{\ell_{M_g}} \lra B\Diff^\lambda(M_g)_{\ell_{M_g}} \lra B\Diff(M_g)_{\iota_2}\]
	where $\mr{Bun}(TM_g,\lambda^*\gamma)_{\ell_{M_g}}\subset \mr{Bun}(TM_g,\lambda^*\gamma)$ is the path component of $\ell_{M_g}$. As explained in the proof of \cref{lem:bundle-maps}, the space $\mr{Bun}(TM_g,\lambda^*\gamma)$ fits into a fibration sequence
	\[
	\mr{Sect}(\mr{Iso}(TM_g,f^*\lambda^*\gamma)\ra M_g)\lra\mr{Bun}(TM_g,\lambda^*\gamma)\lra \mr{Map}(M_g,B)
	\]
	with fibre taken over a map $f$. Using that each path component of $\mr{GL}_{2n}(\bR)$ has finite fundamental group and finitely generated higher homotopy groups, it follows from \cref{lem:section-spaces-finite-CW} that the fibre of this sequence has at all basepoints polycyclic-by-finite fundamental group and finitely generated higher homotopy groups. The same holds for the base and hence also for the total space, since it follows from \eqref{equ:homotopy-groups-B} that also $B$ has finite fundamental group and finitely generated higher homotopy groups. In particular, it follows from \cref{cor:fg-homotopy-implies-fg-homology} that $\mr{Bun}(TM_g,\lambda^*\gamma)_{\ell_{M_g}}$ has degreewise finitely generated integral homology groups. Using that $\pi_i(\Diff(M_g)_{\iota_2})$ is of finite type for $i=0$ by \ref{A-mcg2} and finitely generated for $i\le k$ by assumption, we conclude from \cref{cor:fg-homotopy-implies-fg-homology} that for all $g\ge0$ the $E^2$-page of the Serre spectral sequence
\[
E^2_{p,q}\cong H_p(B\Diff(M_g)_{\iota_2};H_q(\mr{Bun}(TM_g,\lambda^*\gamma)_{\ell_{M_g}};\bZ))\implies H_{p+q}(B\Diff^\lambda(M_g)_{\ell_{M_g}};\bZ).
\]
is finitely generated for $p\le k$ and $q\ge0$. By \cref{prop:HDiffK_g fg}, the group $E^{\infty}_{p,q}$ is finitely generated for $p+q\leq \varphi_{\pi_1(M)}(g)$, so $E^2_{k+1,0}\cong H_{k+1}(B\Diff(M_g)_{\iota_2};\bZ)$ is finitely generated for $k+1\leq \varphi_{\pi_1(M)}(g)$, as claimed.
\medskip

This finishes the proof of Assertion~\ref{A-diff-homology} which was the remaining ingredient in the proof of \cref{thm:pi2 finite}. Equipped with \cref{thm:pi2 finite}, we now proceed to prove \cref{thm:fg pi}.

\subsection{Proof of \cref{thm:fg pi}}\label{sec:proof-of-main-thm}
Let $M$ be a closed smooth manifold of dimension $d=2n\ge6$ with finite fundamental group. Our task is to show that $\BDiff(M)$ and all its homotopy groups are of finite type. We first assume that $M$ is connected.

By \cref{thm:mcg-finfty}, the group $\pi_1(B\Diff(M))$ is of finite type, so in view of \cref{prop:finite-type-homotopy-groups}, it suffices to show that the higher homotopy groups of $\BDiff(M)$ are of finite type or equivalently (as they are abelian) that they are finitely generated. To this end, we consider the composition 
\begin{equation}\label{equ:SW}
\pi_2(M)\xlra{h}H_2(M;\bZ) \xlra{w_2} \bZ/2
\end{equation}
of the Hurewicz map with the homomorphism induced by the second Stiefel-Whitney class of $M$. Since $\pi_2(M)$ is finitely generated (see \cref{coro:pi1-finite-pik-fg}), the kernel of \eqref{equ:SW} is generated by finitely many elements, say $x_1,\ldots,x_k\in \pi_2(M)$. By general position, these generators can be represented (as unpointed homotopy classes) by an embedding $e\colon \sqcup^kS^2\hookrightarrow M$. The normal bundle of this embedding is classified by the second Stiefel-Whitney class, so is trivial as we chose the $x_i$ to lie in the kernel of \eqref{equ:SW}. Consequently, there is an extension $e$ to an embedding of the form $\bar{e}\colon \sqcup ^kS^2\times D^{d-2}\hookrightarrow M$. Restricting diffeomorphisms of $M$ to this tubular neighborhood yields a fibration sequence
\begin{equation}\label{eq:restriction pre-surgery}
\Diff_{\partial}(M\backslash \interior(\sqcup ^kS^2\times D^{d-2}))\lra \Diff(M)\lra \Emb(\sqcup^kS^2\times D^{d-2},M)
\end{equation}
with fibre taken over $\bar{e}$. As $\sqcup^kS^2\times D^{d-2}$ has handle dimension $2\le d-3$ and $M$ has finite fundamental group, at all basepoints the group $\pi_k(\Emb(\sqcup^k S^2\times D^{d-2},M))$ is by \cref{prop:induction-tower} finitely generated for $k\ge2$ and polycyclic-by-finite for $k=1$. In particular, all subgroups of the fundamental groups are polycyclic-by-finite, so in particular finitely generated (see \cref{lem:group-classes-closure}). The long exact sequence in homotopy groups of \eqref{eq:restriction pre-surgery} thus reduces the claim to showing that $\pi_k(\Diff_{\partial}(M\backslash \interior(\sqcup^k S^2\times D^{d-2}));\mr{id})$ is finitely generated for $k\ge1$. For this, we consider \beq
\chi(M) \coloneqq M\setminus\mathrm{int}(\sqcup^kS^2\times D^{d-2})\cup_{\sqcup^kS^2\times S^{d-3}} (\sqcup^k D^3\times S^{d-3}),
\eeq
the manifold obtained from $M$ by performing surgery on the embeddings that constitute $\bar{e}$. This is a closed connected manifold with $\pi_1(\chi(M))\cong \pi_1(M)$ and $\pi_2(\chi(M))\cong\pi_2(M)/\Lambda$ for a subgroup $\Lambda\le \pi_2(M)$ that contains the chosen generating set $\{x_1,\ldots, x_k\}\subset \pi_2(M)$ of the kernel of \eqref{equ:SW} (cf.\,\cite[Lemma 2]{MilnorSurgery}), so $\pi_2(\chi(M))$ is finite, in fact either trivial or of order $2$. Similar to \eqref{eq:restriction pre-surgery}, we have a fibration sequence
\begin{equation}\label{eq:restriction post-surgery}
 \Diff_{\partial}(M\backslash \interior(\sqcup^kS^2\times D^{d-2}))\lra \Diff(\chi(M)) \lra\Emb(\sqcup^kD^3\times S^{d-3},\chi(M)),
\end{equation}
by restricting diffeomorphisms to the canonical inclusion $\sqcup^kD^3\times S^{d-3}\subset \chi(M)$. As $\sqcup^kD^3\times S^{d-3}$ has handle dimension $d-3$ and $\pi_k(\chi(M))$ is finite for $k\le2$, the base of this fibration has at all basepoints finitely generated homotopy groups by \cref{prop:induction-tower}, and the total space has finitely generated homotopy groups based at the identity as a result of \cref{thm:pi2 finite}, so the fibre must have finitely generated homotopy groups based at the identity as well. This completes the proof for connected manifolds.

If $M$ is disconnected, we write it as a disjoint union $M\cong \sqcup_{i\in I}M_i^{\sqcup m_i}$ for a finite set $I$, positive integers $m_i\ge 1$ for $i\in I$, and pairwise non-diffeomorphic connected manifolds $M_i$. This decomposition induces an isomorphism of the form
\[
\textstyle{\Diff(M)\cong \prod_{i\in I}\Diff(M_i)\wr\Sigma_{m_i}},
\]
so the homotopy groups of $\Diff(M)$ fit into short exact sequences 
\[\textstyle{0\lra \prod_{i\in I }\pi_k(\Diff(M_i))\lra \pi_k(\Diff(M))\lra  \prod_{i\in I }\pi_k(\Sigma_{m_i})\lra 0}
\]
for $k\ge0$. By the first part, the kernel of this extension is of finite type and since finite groups are of finite type (see \cref{lem:finite-type-groups-closure}), also the quotient is of finite type. Thus $\pi_k(\Diff(M))$ is of finite type by \cref{lem:f-infty} for $k\ge0$. The proof is completed by applying  \cref{prop:finite-type-fibrations}.

\section{Variants and applications of \cref{thm:fg pi}}\label{sec:elaborations} In this section we prove variants of \cref{thm:fg pi} for manifolds with boundary (see \cref{thm:bdy}) or tangential structures (see \cref{thm:tangential}), and for spaces of homeomorphisms (see \cref{thm:homeo}). We also explain an application to embedding spaces, which includes \cref{thm:codim-one-or-two}.

\subsection{Manifolds with boundary}For manifolds with nonempty boundary, we can prove the following version of \cref{thm:fg pi} for the higher homotopy groups of $\BDiff_\partial(M)$.

\begin{theorem}\label{thm:bdy}Let $d=2n\ge6$ and $M$ a compact smooth $d$-dimensional manifold, possibly with boundary. If at all basepoints $\pi_1(M)$ is finite, then the groups $\pi_k(B\Diff_\partial(M))$ are finitely generated for $k \geq 2$.
\end{theorem}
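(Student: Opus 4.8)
The plan is to deduce \cref{thm:bdy} from \cref{thm:fg pi} by realising $M$ as a codimension-zero submanifold of a closed $2n$-manifold with finite fundamental group whose complement has handle dimension $\le 2n-3$, and then feeding \cref{thm:fg pi} and \cref{prop:induction-tower} into the long exact sequence of a restriction fibration. Two easy reductions come first. As in the proof of \cref{thm:fg pi}, writing $M\cong\sqcup_i M_i^{\sqcup m_i}$ with the $M_i$ pairwise non-diffeomorphic and connected identifies $\Diff_\partial(M)$ with $\prod_i\Diff_\partial(M_i)\wr\fS_{m_i}$, and the homotopy groups of such a wreath product with a finite group are finitely generated in positive degrees once those of the $\Diff_\partial(M_i)$ are; so we may take $M$ connected. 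If in addition $\partial M=\varnothing$ the claim is a special case of \cref{thm:fg pi}, so from now on $M$ is connected with non-empty boundary.

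The first substantive step is to trade away the high-index handles of $M$. Fix a handle decomposition with a single $0$-handle, so that every other handle has index between $1$ and $2n-1$, and remove the handles of index $\ge 3$ one at a time in decreasing order of index. Removing a handle $h$ of index $i$ means drilling out a closed tubular neighbourhood $\nu\cong D^{2n-i}\times D^{i}$ of its cocore, which meets the current boundary in $\partial_0\nu\coloneqq S^{2n-i-1}\times D^{i}$; isotopy extension then provides a fibration sequence $\Diff_\partial(M\setminus\interior\nu)\to\Diff_\partial(M)\to\Emb^{\cong}_{\partial_0}(\nu,M)$ with fibre the inclusion, the base being the union of those components of $\Emb_{\partial_0}(\nu,M)$ that are isotopic to the inclusion. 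The pair $(\nu,\partial_0\nu)$ has relative handle dimension $2n-i\le 2n-3$ since $i\ge 3$, and attaching handles of index $\ge 3$ leaves $\pi_1$ unchanged, so every intermediate manifold has finite $\pi_1$; hence \cref{prop:induction-tower} applies and shows the base has finitely generated homotopy in degrees $\ge 2$ and polycyclic-by-finite $\pi_1$. Running the long exact sequences of these fibrations reduces the theorem to the submanifold $M^{\le 2}\subset M$ spanned by the handles of index $\le 2$.

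The second step handles $M^{\le 2}$ by doubling. Since $M^{\le 2}$ is built from $D^{2n}$ by handles of index $\le 2$, its boundary $\partial M^{\le 2}$ is connected, and $M^{\le 2}$ is obtained from a collar of that boundary by attaching the dual handles, which all have index $\ge 2n-2\ge 3$; therefore $\partial M^{\le 2}\hookrightarrow M^{\le 2}$ is a $\pi_1$-isomorphism. Consequently the double $\widehat M\coloneqq M^{\le 2}\cup_{\partial M^{\le 2}}\overline{M^{\le 2}}$ is a closed $2n$-manifold with $\pi_1(\widehat M)\cong\pi_1(M^{\le 2})\cong\pi_1(M)$ finite, an amalgamated free product $G*_H G$ along a common surjection $H\twoheadrightarrow G$ being just $G$. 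Restricting diffeomorphisms of $\widehat M$ to the complementary copy of $\overline{M^{\le 2}}$ gives $\Diff_\partial(M^{\le 2})\to\Diff(\widehat M)\to\Emb^{\cong}(\overline{M^{\le 2}},\widehat M)$ and, after delooping, a fibration $\Emb^{\cong}(\overline{M^{\le 2}},\widehat M)\to B\Diff_\partial(M^{\le 2})\to\BDiff(\widehat M)$. Here $\BDiff(\widehat M)$ is of finite type by \cref{thm:fg pi}, while $\overline{M^{\le 2}}$ has handle dimension $\le 2\le 2n-3$ and $\pi_1(\widehat M)$ is finite, so \cref{prop:induction-tower} makes the fibre finitely generated in degrees $\ge 2$; the long exact sequence then yields that $\pi_k(B\Diff_\partial(M^{\le 2}))$, and hence $\pi_k(B\Diff_\partial(M))$, is finitely generated for $k\ge 2$.

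The step I expect to be the main obstacle — and really the only place where care is needed — is the handle-dimension bookkeeping: \cref{prop:induction-tower} is available only when the relevant relative handle dimension is at most $2n-3$, and this is exactly what forces the two-stage structure above, namely first thinning $M$ down to its index-$\le 2$ part by means of cocores (which are ``thin'') and only then closing up by doubling, using $2\le 2n-3$, i.e.\ $2n\ge 6$. The remaining ingredients are routine: connectivity of the boundary of a handlebody built with handles of index $\le 2n-2$, the effect on $\pi_1$ of attaching handles of various indices, and isotopy extension for codimension-zero submanifolds. Note that the argument produces no information about $\pi_0(\Diff_\partial(M))$, in keeping with the absence of a relative finiteness theorem for mapping class groups (cf.\ \cref{rem:relative-arithm}).
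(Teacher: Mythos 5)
Your proof is correct, and it takes a genuinely different (though closely related) route from the paper's. The paper fixes a \emph{relative} handle decomposition of $(M,\partial M)$, takes $M^{\le2}$ to be the sub-handlebody of index-$\le 2$ handles attached to a collar of $\partial M$, and runs a \emph{single} restriction fibration
$\Diff_\partial(M^{>2})\to\Diff_\partial(M)\to\Emb_\partial(M^{\le2},M)$; the embedding space is handled by \cref{prop:induction-tower} because the relative handle dimension of $(\partial M\subset M^{\le 2})$ is $\le 2$, and it then remains to control $\Diff_\partial(M^{>2})$. The paper does this by doubling the \emph{high}-index piece: $D(M^{>2})$ is closed with finite $\pi_1$, so \cref{thm:fg pi} and another application of \cref{prop:induction-tower} (to $\Emb(M^{>2},D(M^{>2}))$, using that $M^{>2}$ has handle dimension $\le d-3$) finish the argument. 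You instead use an \emph{absolute} handle decomposition, drill out the cocore tubes of index-$\ge 3$ handles one at a time (each a triad of relative handle dimension $d-i\le d-3$), and reduce to the \emph{low}-index piece $M^{\le 2}$, which you then double. Both strategies pivot on exactly the same two ingredients — \cref{thm:fg pi} for a closed double with finite fundamental group, and \cref{prop:induction-tower} for embedding spaces of handle codimension $\ge 3$ into an ambient manifold with finite $\pi_1$ — so the difference is mainly bookkeeping. The paper's version is a bit more economical (one restriction fibration rather than an iterated cascade), and has the minor advantage that $M^{\le2}$ automatically contains the collar of $\partial M$, so no care is needed about how $M^{\le2}$ meets $\partial M$; your iterative version sidesteps that same issue by never asking for a triad structure on $M^{\le2}\subset M$ directly. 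Your observations that $\pi_1$ is preserved at each step, that $\partial M^{\le2}$ is connected, and that the double has the same fundamental group are all correct. One small stylistic point: the final ``delooping'' is unnecessary; the long exact sequence of the undelooped fibration $\Diff_\partial(M^{\le2})\to\Diff(\widehat M)\to\Emb^\cong(\overline{M^{\le 2}},\widehat M)$ already gives what you need (and it is precisely the sequence the paper uses for the double, only with $M^{>2}$ in place of $M^{\le2}$).
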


\begin{proof}We fix a relative handle decomposition of the boundary inclusion $\partial M\subset M$, write $M^{\le 2}\subset M$ for the union of the handles of index $\le 2$, and set $M^{>2}=M\backslash\mr{int}(M^{\le2})$. Restriction to $M^{\leq 2}$ induces a fibre sequence
\[\Diff_\partial(M^{>2}) \lra \Diff_\partial(M) \lra \Emb_\partial(M^{\leq 2},M)\]
with fibre taken over the inclusion $M^{\leq 2} \subset M$. The relative handle dimension of $\partial M\subset M^{\le2}$ at most $2$, and as $2\le d-3$, \cref{prop:induction-tower} implies that all path components of the base of the sequence have finitely generated higher homotopy groups and polycyclic-by-finite fundamental group, so the result follows from the long exact sequence in homotopy groups once we prove that $\pi_k(\Diff_{\partial}(M^{> 2});\mr{id})$ is finitely generated for $k\ge1$. To see this, we consider the double
\[D(M^{> 2}) \coloneqq M^{> 2} \cup_{\partial M^{>2}} M^{> 2},\]
which is a \emph{closed} manifold. Its diffeomorphism group fits into a fibre sequence
\begin{equation}\label{equ:double-fibration}\Diff_\partial(M^{> 2}) \lra \Diff(D(M^{> 2})) \lra \Emb(M^{> 2},D(M^{> 2}))\end{equation}
induced by restriction, with fibre taken over the inclusion $M^{> 2} \subset D(M^{> 2})$.
As the manifold $D(M^{> 2})$ is obtained from $M^{>2}$ by attaching handles of index at least $3$, and $M$ is obtained from $M^{>2}$ by attaching handles of index at least $d-2$, the inclusions $M^{>2}\subset D(M^{>2})$ and $M^{>2}\subset M$ are both $2$-connected. In particular $\pi_1(D(M^{>2}))$ is finite, so $\pi_k(\Diff(D(M^{> 2}));\mr{id})$ is of finite type for $k\ge0$ by \cref{thm:fg pi}, so in particular finitely generated. Now, by \cref{prop:induction-tower}, every path component of the base has finitely generated homotopy groups, since $M^{> 2}$ has handle dimension at most $d-3$, which can be seen by reversing the handle decomposition of $M$. The result follows from the long exact sequence in homotopy groups induced by \eqref{equ:double-fibration}.
\end{proof}

\begin{remark}
By \cref{prop:finite-type-fibrations}, to extend the full statement of \cref{thm:fg pi} to manifolds with boundary, it would suffice to show that $\pi_0(\Diff_\partial(M))$ is of finite type. \cref{thm:mcg-finfty} does not apply if $\partial M\neq \varnothing$ (cf.\,\cref{rem:relative-arithm}), but there is a trick in the case that $\partial M$ admits a nullbordism $W$ of (absolute) handle dimension $\le 1$. In this case, we have an exact sequence
\[
\pi_1(\Emb(W,M\cup_{\partial M}W);\iota)\lra \pi_0(\Diff_\partial(M))\lra \pi_0(\Diff(M\cup_{\partial M} W))_{\iota}\lra 0
\]
where the rightmost group is the stabiliser of the inclusion $\iota\in \pi_0(\Emb(W,M\cup_{\partial M}W))$. By \cref{prop:induction-tower} the leftmost group is polycyclic-by-finite, and so its image in  $\pi_0(\Diff_\partial(M))$ is as well. Hence, by \cref{lem:f-infty} in order to show that $\pi_0(\Diff_\partial(M))$ is of finite type, it suffices to show that $\pi_0{\Diff(M\cup_{\partial M} W)}_{\iota}$ is of finite type, which would follow from \cref{thm:mcg-finfty} and \cref{lem:f-infty} if $\pi_0(\Diff(M\cup_{\partial M} W))_{\iota}\le \pi_0(\Diff(M\cup_{\partial M} W))$ had finite index. This is indeed the case, since $\pi_0(\Emb(W,M\cup_{\partial M}W))$ is finite by an argument similar to the proof of \cref{lem:path-compo-finite}.

This trick applies for instance to manifolds $M^\circ=M\backslash \mr{int}(D^{2n})$ obtained from a closed manifold $M$ of dimension $2n\ge6$ with finite $\pi_1(M)$ by removing an embedded disc.
\end{remark}

\subsection{Tangential structures}Let $M$ be a manifold of dimension $2n$ and 
\[M\xlra{\ell} B\xlra{\lambda} B\mr{O}(2n)\]
be a factorisation of a choice of tangent classifier for $M$. This induces a bundle map $\ell\colon TM\ra \lambda^*\gamma_{2n}$ and, by restriction, a bundle map $\ell_\partial\colon TM|_{\partial M}\ra \lambda^*\gamma_{2n}$. Denoting by $\mr{Bun}_\partial(TM,\lambda^*\gamma;\ell_\partial)$ the space of bundle maps $TM\ra \lambda^*\gamma_{2n}$ extending $\ell_{\partial}$, we consider the homotopy quotient by the action via the derivative
\[B\Diff^\lambda_\partial(M;\ell_\partial) \coloneqq \mr{Bun}_{\partial}(TM,\lambda^*\gamma;\ell_\partial) \sslash \Diff_{\partial}(M)\]
and the path component $B\Diff^\lambda_\partial(M;\ell_\partial)_\ell\subset B\Diff^\lambda_\partial(M;\ell_\partial)$ induced by $\ell$.

\begin{theorem}\label{thm:tangential}Let $M$ be a compact smooth manifold of dimension $2n \geq 6$, possibly with boundary. If at all basepoints $\pi_1(M)$ is finite and at all basepoints $\pi_k(B)$ is finitely generated for $k \geq 2$, then $\pi_k(B\Diff^\lambda_\partial(M;\ell_\partial)_\ell)$ is finitely generated for $k \geq 2$.\end{theorem}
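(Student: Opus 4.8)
The plan is to deduce \cref{thm:tangential} from \cref{thm:bdy} (which also covers the closed case) by a long exact sequence argument applied to the tautological fibration whose fibre is the space of tangential structures. Restricting the fibration
\[
\mr{Bun}_{\partial}(TM,\lambda^*\gamma;\ell_\partial)\lra B\Diff^\lambda_\partial(M;\ell_\partial)\lra B\Diff_\partial(M)
\]
to the path component $B\Diff^\lambda_\partial(M;\ell_\partial)_\ell$ of the total space produces a fibration sequence
\[
\mr{Bun}_{\partial}(TM,\lambda^*\gamma;\ell_\partial)_\ell\lra B\Diff^\lambda_\partial(M;\ell_\partial)_\ell\lra B\Diff_\partial(M),
\]
whose fibre is, up to homotopy equivalence, the path component of $\ell$: the components of the a priori fibre are permuted transitively by $\pi_0(\Diff_\partial(M))$ and hence all homotopy equivalent, and alternatively one invokes the orbit--stabiliser theorem for homotopy quotients exactly as in the proof of \cref{thm:stable-finiteness}. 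In the associated long exact sequence, for $k\ge 2$ the group $\pi_k(B\Diff^\lambda_\partial(M;\ell_\partial)_\ell)$ is an extension of a subgroup of $\pi_k(B\Diff_\partial(M))$ by a quotient of $\pi_k(\mr{Bun}_{\partial}(TM,\lambda^*\gamma;\ell_\partial)_\ell)$; all of these are abelian, so it suffices to prove that $\pi_k(B\Diff_\partial(M))$ and $\pi_k(\mr{Bun}_{\partial}(TM,\lambda^*\gamma;\ell_\partial)_\ell)$ are finitely generated for $k\ge 2$. The former is precisely \cref{thm:bdy}.

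For the fibre, the standard identification of tangential structures with sections (cf.\ the proof of \cref{lem:bundle-maps}) exhibits $\mr{Bun}_{\partial}(TM,\lambda^*\gamma;\ell_\partial)$ as weakly equivalent to a relative section space $\mr{Sect}_{\partial M}(E\to M)$ of a fibration $E\to M$ whose fibre is the homotopy fibre $F\coloneqq\hofib(\lambda\colon B\to B\mr{O}(2n))$, with the section prescribed on $\partial M$ being the one determined by $\ell_\partial$. The homotopy groups of $F$ fit into the exact sequence $\pi_{j+1}(B\mr{O}(2n))\to\pi_j(F)\to\pi_j(B)\to\pi_j(B\mr{O}(2n))$, in which $\pi_\ast(B\mr{O}(2n))$ is finitely generated in every degree and $\pi_j(B)$ is, by hypothesis, finitely generated for $j\ge 2$ at all basepoints; hence $\pi_j(F)$ is finitely generated for all $j\ge 2$ at all basepoints. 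Since $(M,\partial M)$ is homotopy equivalent to a finite CW pair, \cref{lem:section-spaces-finite-CW}(iii) --- together with the remark following it to accommodate disconnected $M$ --- then shows that $\pi_k(\mr{Bun}_{\partial}(TM,\lambda^*\gamma;\ell_\partial)_\ell)$ is finitely generated for all $k\ge 2$, which completes the argument.

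I do not expect a genuine obstacle here: every required input --- \cref{thm:bdy}, the section-space finiteness \cref{lem:section-spaces-finite-CW}, and the identification of bundle-map spaces with section spaces --- is already available, and what remains is bookkeeping with long exact sequences. The one point deserving care is that the hypotheses place no restriction on $\pi_1(B)$, and correspondingly none on the fundamental groups of $\mr{Bun}_{\partial}(TM,\lambda^*\gamma;\ell_\partial)$ or of the base $B\Diff_\partial(M)$; the argument must be arranged so that only higher homotopy groups of the fibre enter, which is exactly why the conclusion is restricted to $k\ge 2$ and why the possible failure of finite generation of these fundamental groups is harmless. A minor secondary point is to set up the boundary conditions correctly when identifying $\mr{Bun}_{\partial}(TM,\lambda^*\gamma;\ell_\partial)$ with a relative section space, but this is routine and already implicit in the paper's treatment of the closed case.
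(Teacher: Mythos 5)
Your proposal is correct and follows essentially the same route as the paper: a long exact sequence for the fibration over (a finite union of components of) $B\Diff_\partial(M)$ with fibre a path component of $\mr{Bun}_\partial(TM,\lambda^*\gamma;\ell_\partial)$, using \cref{thm:bdy} for the base and \cref{lem:section-spaces-finite-CW} for the fibre. The only cosmetic difference is that you analyse the fibre in one step, identifying $\mr{Bun}_\partial(TM,\lambda^*\gamma;\ell_\partial)$ with a relative section space whose fibre is $\hofib(\lambda)$, whereas the paper first maps to $\Map_\partial(M,B)$ and then handles the resulting section space of linear isomorphisms; both are straightforward applications of the same lemma.
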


\begin{proof}There is a fibration sequence
	\[\mr{Bun}_\partial(TM,\lambda^*\gamma;\ell_\partial)_\ell \lra B\Diff^\lambda_\partial(M;\ell_\partial)_\ell \lra B\Diff_\partial(M)_{\ell},\]
	where $\Diff_\partial(M)_{\ell}\subset \Diff_\partial(M)$ is the stabiliser of $[\ell]\in \pi_0(\mr{Bun}(TM,\lambda^*\gamma;\ell_\partial))$ by the $\Diff_{\partial}(M)$-action via the derivative, and $\mr{Bun}_\partial(TM,\lambda^*\gamma;\ell_\partial)_\ell \subset \mr{Bun}_\partial(TM,\lambda^*\gamma;\ell_\partial)$ denotes the path component of $\ell$. The base in this fibration has finitely generated higher homotopy groups by \cref{thm:bdy}, so it suffices to show the same for the fibre. Using the fibre sequence
	\[
	\mr{Sect}_\partial(\mr{Iso}(TM,f^*\lambda^*\gamma)\ra M)\ra \mr{Bun}_\partial(TM,\lambda^*\gamma;\ell_\partial)\lra \Map_\partial(M,B)
	\]
	with homotopy fibre taken over a map $f\in \Map_\partial(M,B)$ (see the proof of \cref{lem:bundle-maps}), this follows from \cref{lem:section-spaces-finite-CW} applied to base and fibre.
\end{proof}

\begin{remark}If in addition $\pi_1(B)$ is polycyclic-by-finite and $\pi_0(\Diff_\partial(M)_\ell)$ is of finite type, then a similar argument shows that also the space $B\Diff^\lambda_\partial(M;\ell_\partial)_\ell$ and its fundamental group are of finite type. This applies for instance when $M$ is closed and $\pi_k(\hofib(B\ra B\mr{O}(2n))$ is finite for $0\le k\le 2n$, since then $\pi_0(\mr{Bun}(TM,\lambda^*\gamma;\ell_\partial)_\ell)$ is finite by \cref{lem:section-spaces-finite-CW}, so $\pi_0(\Diff(M)_\ell)$ has finite index in $\pi_0(\Diff(M))$ which has finite type by \cref{thm:mcg-finfty}.
\end{remark}

\subsection{Homeomorphisms} In order to extend our results to topological groups of homeomorphisms $\Homeo_\partial(M)$ in the compact-open topology, we rely on parametrised smoothing theory (see \cite[Essay V]{KirbySiebenmann}) in the form of a fibration sequence
\begin{equation}\label{equ:smoothing-theory}\mr{Sect}_\partial (E \to M)_A \lra B\Diff_\partial(M) \lra B\mr{Homeo}_\partial(M),\end{equation}
for any smooth manifold $M$ of dimension $d\ge5$, where $E \to M$ is a fibration with fibre $\mr{Top}(d)/\mr{O}(d)$ equipped with a section, and $\mr{Sect}_\partial(E \to M)_A \subset \mr{Sect}_\partial(E \to M)$ is a certain collection of path components of the relative section space.

\begin{theorem}\label{thm:homeo}Let $M$ be a compact smooth manifold of dimension $2n \geq 6$ such that at all basepoints $\pi_1(M)$ is finite. Then the groups $\pi_k(B\Homeo_\partial(M))$ are finitely generated for $k\ge2$. Moreover, if $\partial M=\varnothing$ then $B\Homeo(M)$ and $\pi_1(B\Homeo(M))$ are of finite type.
\end{theorem}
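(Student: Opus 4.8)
The plan is to exploit the parametrised smoothing theory fibration sequence \eqref{equ:smoothing-theory},
\[
\mr{Sect}_\partial(E\to M)_A\lra B\Diff_\partial(M)\lra B\Homeo_\partial(M),
\]
whose total space $B\Diff_\partial(M)$ has finitely generated higher homotopy groups by \cref{thm:bdy}. Reading off the long exact sequence in homotopy groups, the first assertion follows once one shows that at all basepoints the relative section space $\mr{Sect}_\partial(E\to M)_A$ has finitely generated homotopy groups in every positive degree. At $\pi_2(B\Homeo_\partial(M))$ the relevant subgroup of $\pi_1$ of the section space must be finitely generated, for which it suffices that this $\pi_1$ be polycyclic-by-finite, since subgroups of such groups are again polycyclic-by-finite, hence finitely generated (\cref{lem:group-classes-closure}).

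To control the section space I would apply \cref{lem:section-spaces-finite-CW} to a finite CW structure on the pair $(M,\partial M)$, which has relative dimension $d=2n$, and to the fibration $E\to M$ with fibre $F=\mr{Top}(2n)/\mr{O}(2n)$. The hypotheses of that lemma require that $\pi_k(F)$ be finitely generated for $k\ge2$ and polycyclic-by-finite --- here in fact trivial --- for $k=1$. That $F$ is $2$-connected follows from the $2$-connectivity of $\mr{Top}/\mr{O}$ together with the high connectivity of the stabilisation map $F\to\mr{Top}/\mr{O}$ (the same type of Kirby--Siebenmann connectivity estimate already used in the proof of \cref{prop:mcg-finfty}); this also shows $\pi_k(F)$ is finite in a range growing with $n$. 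For the remaining degrees one runs the long exact sequence of $\mr{O}(2n)\to\mr{Top}(2n)\to F$: the groups $\pi_*(\mr{O}(2n))$ are degreewise finitely generated because $\mr{O}(2n)$ is a compact Lie group, and $\pi_*(\mr{Top}(2n))$ is degreewise finitely generated as well. Granting this, \cref{lem:section-spaces-finite-CW}(ii)--(iii) supplies the required finiteness for $\mr{Sect}_\partial(E\to M)_A$, and feeding it back into the long exact sequence of \eqref{equ:smoothing-theory} proves that $\pi_k(B\Homeo_\partial(M))$ is finitely generated for $k\ge2$.

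I expect the main obstacle to be exactly the claim that $\pi_*(\mr{Top}(2n))$ --- equivalently, $\pi_*(F)$ --- is finitely generated in \emph{all} degrees. The Kirby--Siebenmann connectivity estimates only pin this down in a finite range, while the section space over the $2n$-dimensional manifold $M$ involves $\pi_k(F)$ for arbitrarily large $k$; the unbounded statement is a genuine surgery-theoretic finiteness result for $\mr{Top}(d)$ with $d\neq 4$ that has to be imported from the literature rather than derived from the tools set up here.

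For the final assertion, assume $\partial M=\varnothing$. As in the proof of \cref{thm:fg pi}, the disconnected case reduces to the connected one using the wreath-product decomposition of $\Homeo(M)$ and the closure of finite-type spaces under finite products and under fibrations whose base and homotopy fibres are of finite type (\cref{prop:finite-type-fibrations}). For $M$ closed and connected with finite $\pi_1(M)$, \cref{prop:mcg-finfty} gives that $\pi_1(B\Homeo(M))=\pi_0(\Homeo(M))$ is of finite type; combined with the finite generation of $\pi_k(B\Homeo(M))$ for $k\ge2$ established above, \cref{prop:finite-type-homotopy-groups} then shows that $B\Homeo(M)$ is of finite type, and in particular that its fundamental group is.
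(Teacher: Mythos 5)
Your argument is essentially the paper's: the smoothing‑theory fibration, \cref{thm:bdy} for the total space, \cref{lem:section-spaces-finite-CW} for the section space over the finite pair $(M,\partial M)$, and \cref{prop:mcg-finfty} with \cref{prop:finite-type-homotopy-groups} for the closed case. The single ingredient you correctly flag as needing to be imported --- that $\pi_*(\mr{Top}(2n)/\mr{O}(2n))$ is finitely generated in all degrees, not merely in the Kirby--Siebenmann range --- is exactly what the paper supplies by citing \cite[Corollary D]{Kupers}, which also gives the $1$-connectivity needed for the $\pi_1$-hypothesis of the section‑space lemma, so your proof closes once that reference is inserted.
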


\begin{proof}The space $\mr{Top}(2n)/\mr{O}(2n)$ is 1-connected and has finitely generated homotopy groups for $2n\ge6$ as a consequence of \cite[Corollary D]{Kupers}, so \cref{lem:section-spaces-finite-CW} implies that  $\mr{Sect}_\partial (E \to M)_A$ has polycyclic-by-finite fundamental group and finitely generated homotopy groups at all basepoints. As $\pi_k(B\Diff_\partial(M))$ is finitely generated for $k\ge2$ by \cref{thm:bdy}, the long exact sequence induced by \eqref{equ:smoothing-theory} implies the first part of the claim. For the second, it suffices by \cref{prop:finite-type-homotopy-groups} to show that $\pi_0(\Homeo(M))$ is of finite type. This holds by \cref{thm:mcg-finfty}.\end{proof}

\begin{remark}A version for the higher homotopy groups of spaces of homeomorphisms with tangential structure can be deduced from \cref{thm:homeo} similar to the proof of \cref{thm:tangential}.
\end{remark}

\subsection{Embedding spaces in all codimensions}\label{sec:codimension-1-or-2}
\cref{prop:induction-tower}---proved via embedding calculus---is a finiteness result for spaces of embeddings of handle codimension at least $3$
and \cref{thm:fg pi} can be interpreted as a finiteness result for space of embeddings of handle codimension $0$. We now deduce from the latter a finiteness result for any codimension, which includes \cref{thm:codim-one-or-two} as a special case.

We adopt the conventions on embeddings of triads introduced in \cref{section:triads}.

\begin{theorem}\label{thm:thick-codim-one-or-two} Let $M$ be a smooth compact manifold of dimension $2n \geq 6$ and $N\subset M$ a compact triad-pair. If at all basepoints $\pi_1(M)$ and $\pi_1(M\backslash N)$ are finite, then $\pi_k(\mr{Emb}_{\partial_0}(N,M),\mr{inc})$ is finitely generated for $k \geq 2$.
\end{theorem}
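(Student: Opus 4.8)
The plan is to reduce the statement in two steps to \cref{thm:bdy}, the finiteness result for the higher homotopy groups of diffeomorphism groups of even-dimensional compact manifolds with boundary. I would fix the inclusion $\iota\coloneqq\mr{inc}\colon N\hookrightarrow M$, choose a closed tubular neighbourhood $\nu\subset M$ of $N$ compatible with the triad structure so that $\nu$ is a compact codimension-$0$ submanifold which is again a triad-pair with $\partial_0\nu=\nu\cap\partial M$, and set $C\coloneqq M\setminus\mr{int}(\nu)$. Then $C$ is a compact codimension-$0$ submanifold of dimension $2n\ge6$, and since it deformation retracts onto $M\setminus N$ its fundamental group $\pi_1(C)\cong\pi_1(M\setminus N)$ is finite at all basepoints — this is the only place where the hypothesis on the complement enters. (If $C$ is disconnected I would argue componentwise, invoking \cref{thm:fg pi} for any closed components.)

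\emph{Step 1 (reduction to codimension zero).} Restriction along the zero section $N\subset\nu$ is a fibration $\Emb_{\partial_0}(\nu,M)\to\Emb_{\partial_0}(N,M)$ whose fibre over $\iota$ is the space of tubular-neighbourhood structures on $N\subset M$ agreeing with the chosen one near $\partial_0N$. By uniqueness of tubular neighbourhoods this fibre is weakly equivalent to a relative section space $\mr{Sect}_{\partial_0N}(E\to N)$ of a bundle over the finite CW pair $(N,\partial_0N)$ with fibre homotopy equivalent to $\mr{GL}_k(\bR)$, where $k=\mr{codim}(N\subset M)$. As $\mr{GL}_k(\bR)$ has finite fundamental group and finitely generated homotopy groups, \cref{lem:section-spaces-finite-CW} shows that at all basepoints this section space has polycyclic-by-finite fundamental group and finitely generated higher homotopy groups. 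Feeding this into the long exact sequence of the fibration based at $\iota$, and using that subgroups of polycyclic-by-finite groups and of finitely generated abelian groups are finitely generated (\cref{lem:group-classes-closure}), reduces the theorem to showing that $\pi_k(\Emb_{\partial_0}(\nu,M),\iota)$ is finitely generated for $k\ge2$.

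\emph{Step 2 (codimension zero via diffeomorphism groups).} Restriction of diffeomorphisms to $\nu$ gives a map $\Diff_\partial(M)\to\Emb_{\partial_0}(\nu,M)$ which is a fibration onto its image, a union of path components $\Emb^{\cong}_{\partial_0}(\nu,M)$ containing $\iota$; in particular $\pi_k(\Emb^{\cong}_{\partial_0}(\nu,M),\iota)=\pi_k(\Emb_{\partial_0}(\nu,M),\iota)$ for all $k$. The fibre over $\iota$ is weakly equivalent to $\Diff_\partial(C)$, so there is a fibration sequence
\[\Diff_\partial(C)\lra\Diff_\partial(M)\lra\Emb^{\cong}_{\partial_0}(\nu,M).\]
By \cref{thm:bdy} applied to $M$ and to $C$ — both even-dimensional and with finite fundamental group — the groups $\pi_j(\Diff_\partial(M),\mr{id})$ and $\pi_j(\Diff_\partial(C),\mr{id})$ are finitely generated for all $j\ge1$; moreover each of them is abelian (automatically for $j\ge2$, and for $j=1$ as the fundamental group of a topological group), so all of their subgroups are finitely generated as well. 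The long exact sequence then exhibits, for each $k\ge2$, the group $\pi_k(\Emb^{\cong}_{\partial_0}(\nu,M),\iota)$ as an extension of a subgroup of $\pi_{k-1}(\Diff_\partial(C),\mr{id})$ by a quotient of $\pi_k(\Diff_\partial(M),\mr{id})$, hence finitely generated. Together with Step 1 this proves the theorem, and in particular recovers the case $\mr{codim}(N\subset M)\ge3$ handled directly by \cref{prop:induction-tower}.

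The routine but slightly delicate points are the precise set-up of the two restriction fibrations in the presence of boundary and corners, and the identification in Step 1 of the fibre as a section space governed by \cref{lem:section-spaces-finite-CW}. I expect the conceptual heart — and the reason the result holds in \emph{all} codimensions and not only in codimension $\ge3$ — to be Step 2: a codimension-zero embedding space sits in a fibration whose fibre is the diffeomorphism group of the complement $C$, and the assumption that $\pi_1(M\setminus N)$ is finite is exactly what makes \cref{thm:bdy} applicable to $C$. Getting this bridge right is the main obstacle.
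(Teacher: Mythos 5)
Your proof is correct and follows the same route as the paper's proof of \cref{thm:thick-codim-one-or-two}: thicken $N$ to a tubular neighbourhood, identify the homotopy fibre of $\Emb_{\partial_0}(\nu,M)\to\Emb_{\partial_0}(N,M)$ as a section space controlled by \cref{lem:section-spaces-finite-CW}, and then use the restriction fibration $\Diff_\partial(C)\to\Diff_\partial(M)\to\Emb^{\cong}_{\partial_0}(\nu,M)$ together with \cref{thm:bdy} applied to both $M$ and the complement $C\simeq M\setminus N$. The only deviation is cosmetic (e.g.\ your identification of the model fibre as $\mr{GL}_k(\bR)$, which is in fact the correct homotopy type of the pointwise stabiliser of $T_nN\subset T_nM$ and feeds into \cref{lem:section-spaces-finite-CW} equally well).
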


\begin{proof}We consider the compact triad-pair $\nu(N)\subset M$ given by a closed tubular neighborhood $\nu(N)\subset M$ where $\partial_0(\nu(N))$ is the part of $\partial(\nu(N))$ lying over $\partial_0N$. Restriction along the inclusion $N\subset \nu(N)$ induces a fibration sequence
\begin{equation}\label{equ:thickening-embeddings}
\mr{Sect}_{\partial_0}(\mr{Iso}(T\nu(N),TM\ \mr{rel}\ TN)\ra N)\lra \Emb_{\partial_0}(\nu(N),M)\lra \Emb_{\partial_0}(N,M)
\end{equation}
with homotopy fibre taken over the inclusion, where $\mr{Sect}_{\partial_0}(\mr{Iso}(T\nu(N),TM\ \mr{rel}\ TN)\ra N)$ is the space of sections fixed at $\partial_0N$ of the bundle $\mr{Iso}(T\nu(N),TM\ \mr{rel}\ TN)\ra N$ over $N$ whose fibres are the space of linear isomorphisms $T_n\nu(N)\to T_nM$ that extend the inclusion $T_nN\subset T_nM$. Each of these fibres are homotopy equivalent to $\mr{GL}_k(\bR)$ where $k=\mr{codim}(N\subset M)$, so it follows from \cref{lem:section-spaces-finite-CW} that the fibre in \eqref{equ:thickening-embeddings} has finitely generated higher homotopy groups and polycyclic-by-finite fundamental group at all basepoints. It thus suffices to show that the higher homotopy groups of $\Emb_{\partial_0}(\nu(N),M)$ based at the inclusion are finitely generated. For this we consider the fibration sequence induced by restriction
	\[\Diff_\partial((M \setminus \nu(N))\cup \partial_1(\nu(N))) \lra \Diff_\partial(M) \lra \Emb_{\partial_0}(\nu(N),M)\]
	with homotopy fibre taken over the inclusion. By \cref{thm:bdy}, the groups $\pi_k(\Diff_\partial(M);\mr{id})$ are finitely generated for $k\ge1$, so it suffices to show that the same holds for $\Diff_\partial((M \setminus \nu(N))\cup \partial_1(\nu(N)))$. Shrinking the tubular neighborhood, we see that $(M \setminus \nu(N))\cup \partial_1(\nu(N))$ is homotopy equivalent to $M\backslash N$, so its fundamental group is finite at all basepoints by assumption. The claim now follows from \cref{thm:bdy}.
\end{proof} 

\subsubsection{Some examples}The hypotheses for \cref{thm:codim-one-or-two} are met in many examples in which \cref{prop:induction-tower} does not apply. In the introduction we have an example involving hypersurfaces in $\bC P^n$ with possibly non-simply connected complement; here are two even more basic ones:

\begin{enumerate}
\item \label{exam:cpn-emb} The complement of the usual inclusion $\bC P^{n-1} \subset \bC P^n$ is contractible, so \cref{thm:codim-one-or-two} implies that for $n\ge3$, the groups $\pi_k(\Emb(\bC P^{n-1},\bC P^n),\mr{inc})$ are finitely generated for $k \geq 2$. Furthermore, since the complement of a closure of a tubular neighborhood of $\bC P^{n-1}$ is diffeomorphic to a $2n$-disc and $\pi_0(\Diff_\partial(D^{2n}))$ is finite, it also follows that $\pi_1(\Emb(\bC P^{n-1},\bC P^n),\mr{inc})$ is polycyclic-by-finite.
\item Similarly, we get that $\pi_k(\Emb(\bR P^{2n-1},\bR P^{2n}),\mr{inc})$ is finitely generated if $k \geq 2$ and polycyclic-by-finite if $k=1$, provided $2n\ge6$. 
\end{enumerate}
In \cref{exam:codim-two-infinite} below, we will see that \cref{thm:thick-codim-one-or-two} can fail in low codimensions if the assumption on the fundamental group is dropped.

\section{On the limits of Theorems~\ref{thm:fg pi} and \ref{thm:codim-one-or-two}}\label{sec:infinite-generatedness}
In this final section, we explain some examples that illustrate that the finiteness condition on the fundamental groups in \cref{thm:fg pi} or \cref{thm:codim-one-or-two} is often necessary.

\subsection{Infinite generation in the homotopy groups of diffeomorphism groups}
As mentioned in the introduction, it is known that the homotopy groups of $\BDiff_\partial(M)$ need not be finitely generated if the fundamental group is not finite. Perhaps the most prominent instances of this phenomenon are high-dimensional tori and solid tori:

\begin{example}\label{ex:torus}\ 
\begin{enumerate}
\item\label{enum:torus} For $M = T^d=\times ^dS^1$ with $d \geq 6$,  Hsiang--Sharpe \cite[Theorem 2.5]{HsiangSharpe} proved that for $k\leq d$, the group $\pi_k(B\Diff(T^d))$ is not finitely generated (see also \cite[Theorem 4.1]{Hatcher} for the case $k=1$).  Moreover, they showed that the groups $\pi_k(B\Diff(T^{d-2} \times S^2))$ are not finitely generated for $k \leq d-2$, again provided that $d \geq 6$ \cite[Example 3]{HsiangSharpe}.
\item\label{enum:solid-torus} One can show that for every prime $p$ the sum $\bigoplus_\bN \bZ/p$ injects into $\pi_{2p-3}(B\Diff_\partial(S^1 \times D^{d-1}))$ as long as $d$ is sufficiently large with respect to $p$. For $p=2$, this can be deduced from work of Igusa \cite[8.a.2]{Igusa} and for $p>2$ it can be shown by a combination of pseudoisotopy theory and work of Grunewald--Klein--Macko \cite{GrunewaldKleinMacko} (see \cite[Section 4]{FarrellOntaneda} for an explanation of how this goes, and see \cite[Theorem A]{BustamanteRandalWilliams} for a different approach if $d$ is even).
\end{enumerate}\end{example}

In addition to specific examples such as tori, there are large classes of manifolds with infinite fundamental group for which some homotopy group of $B\Diff(M)$ is not finitely generated:

\begin{example}\ 
\begin{enumerate}
\item Hsiang--Sharpe \cite[Proposition 2.2 (A)]{HsiangSharpe} proved that for any closed manifold $M$ of dimension $d \geq 6$ such that $\pi_1(M)$ contains infinitely many elements that are not conjugate to their inverses, the group $\pi_0(\Diff(M))$ is not finitely generated if $\pi_1(\hAut(M),\mr{id})$ is finitely generated (so for example if at all basepoints $\pi_k(M)$ is polycyclic-by-finite for $k=1$ and finitely generated for $2\le k\le d+1$, by \cref{lem:section-spaces-finite-CW}).
\item Once completed, Weiss--Williams' programme on automorphisms of manifolds (see \cite{WeissWilliamsSurvey} for a survey) can be used to generalise \cref{ex:torus} \ref{enum:solid-torus} to show non-finite generation results for $B\Diff_\partial(M)$ for many manifolds $M$, e.g.~all high-dimensional orientable manifolds whose fundamental group surjects onto the integers. For this kind of manifolds one can show that for any prime $p>2$, the group $\pi_{2p-3}(B\Diff_\partial(M))$ is not finitely generated if the dimension $d$ of $M$ is sufficiently large compared to $p$. 

Let us outline how this roughly goes: using the composition 
\[\tfrac{\BlockDiff_\partial(S^1\times D^{d-1})}{\Diff_\partial(S^1\times D^{d-1})} \xrightarrow{\mathrm{ww}} \Omega^{\infty}(\Omega \Wh^\Diff(S^1)_{hC_2}) \xrightarrow{\mathrm{tr}} \Omega^{\infty}(\Omega \Wh^\Diff(S^1))\]
of the Weiss--Williams map and the transfer, one sees that the group $\pi_{2p-3}(\BlockDiff_\partial(S^1\times D^{d-1})/\Diff_\partial(S^1\times D^{d-1}))$ injects into $\pi_{2p-3}(\Omega^{\infty}(\Omega \Wh^\Diff(S^1)_{hC_2}))$ for $p\ll d$, and after localising away from $2$ also into $\pi_{2p-2}(\Wh^\Diff(S^1))$, which is known to contain a copy of $\bigoplus_\bN \bZ/p$ (related to \cref{ex:torus} \ref{enum:solid-torus}). Choosing an embedding of $S^1\times D^{d-1}\hookrightarrow M$ such that $S^1\times D^{d-1}\hookrightarrow M \ra K(\pi_1(M),1)\ra K(\bZ,1)$ is an equivalence, one then argues that the same is true for $\pi_{2p-3}(\BlockDiff_\partial(M)/\Diff_\partial(M))$, using the commutative diagram
\[\qquad \begin{tikzcd} \tfrac{\BlockDiff_\partial(S^1\times D^{d-1})}{\Diff_\partial(S^1\times D^{d-1})} \rar{\mathrm{tr} \circ \mathrm{ww}} \dar &[5pt] \Omega^{\infty}(\Omega \Wh^\Diff(S^1)) \dar \rar{\simeq} &[-10pt] \Omega^{\infty}(\Omega \Wh^\Diff(K(\bZ,1)))\\[-5pt]
	\tfrac{\BlockDiff_\partial(M)}{\Diff_\partial(M)} \rar{\mathrm{tr} \circ \mathrm{ww}} & \Omega^{\infty}(\Omega \Wh^\Diff(M)) \rar & \Omega^{\infty}(\Omega \Wh^\Diff(K(\pi_1(M),1))) \uar \end{tikzcd}\]
whose bottom-right horizontal map is induced by Postnikov truncation. To go from $\BlockDiff_\partial(M)/\Diff_\partial(M)$ to $B\Diff_\partial(M)$ it suffices to show that the composition $\BlockDiff_\partial(M)\ra \BlockDiff_\partial(M)/\Diff_\partial(M) \ra \Omega^{\infty}(\Omega \Wh^\Diff(K(\bZ,1)))$ is trivial on higher homotopy groups, which one can do by factoring it over $\hAut(K(\bZ,1))\simeq K(\bZ,1)\rtimes \bZ^\times$.

\end{enumerate}
\end{example}

\subsection{Infinite generation in the homotopy groups of embedding spaces}

We end with an example illustrating that the assumption on $\pi_1(M \setminus N)$ \cref{thm:thick-codim-one-or-two} is necessary.

\begin{example}\label{exam:codim-two-infinite}For many $d\ge6$, the component of standard inclusion in $\Emb_{\partial}(D^{d-2},D^d)$ has some homotopy groups that are not finitely generated. In this case the complement is homotopy equivalent to $S^1$, so its fundamental group is infinite cyclic. 

To see this non-finite generation, one can argue as follows: restricting diffeomorphisms of $S^1\times D^{d-1}$ to a slice $\ast\times D^{d-1}\subset S^1\times D^{d-1}$ yields a fibre sequence 
\[\Diff_{\partial}(D^d)\to\Diff_{\partial}(S^1\times D^{d-1})\to\Emb_{\partial}(D^{d-1},S^1\times D^{d-1})\]
with fibre taken over the inclusion. Now fix a pair $(k,d)$ such that $d\neq 4,5,7$ and so that $\pi_k(\Diff_\partial(S^1\times D^{d-1});\mr{id})$ is not finitely generated (as mentioned in \cref{ex:torus} \ref{enum:solid-torus}, there are many such pairs). Since $\pi_k(\Diff_\partial(D^{d});\mr{id})$ is finitely generated for all $k\ge0$ by \cite[Theorem A]{Kupers}, the group $\pi_k(\Emb_{\partial}(D^{d-1},S^1\times D^{d-1});\mr{inc})$ cannot be finitely generated. From this and a variant of the ``delooping trick'' (cf.\,\cite[p.\,23-25]{BurgheleaLashofRothenberg}) in the form of an equivalence $\Emb_{\partial}(D^{d-1},S^1\times D^{d-1})\simeq \Omega \Emb_{\partial}(D^{d-2},D^{d})$, one concludes $\pi_{k+1}(\Emb_{\partial}(D^{d-2},D^{d});\mr{inc})$ is not finitely generated either.
\end{example}

\bibliographystyle{amsalpha}
\bibliography{refs}

\end{document}